\newtheorem{thm}{Theorem}[section]
\newtheorem{lem}[thm]{Lemma}
\theoremstyle{definition}
\newtheorem{defn}{Definition}[section]
\numberwithin{equation}{section}
\def\d{\,\mathrm{d}}
\def\dt{\frac{\mathrm{d}}{\mathrm{d}t}}
\begin{document}

\title[ZHANG AND LI] 
{Boundedness in a two-dimensional doubly degenerate nutrient taxis system with logistic source}
\author[Zhang]{Zhiguang Zhang }%
\address{School of Mathematics, Southeast University, Nanjing 211189, P. R. China; and School of Mathematics and Statistics, Chongqing Three Gorges University, Wanzhou 404020 , P. R. China}
\email{guangz$\_$z@163.com}

\author[Li]{Yuxiang Li}
\address{School of Mathematics, Southeast University, Nanjing 211189, P. R. China}
\email{lieyx@seu.edu.cn}

\thanks{Supported in part by National Natural Science Foundation of China (No. 12271092, No. 11671079), Jiangsu Provincial Scientific Research Center of Applied Mathematics (No. BK20233002) and Science and Technology Research Program of Chongqing Municipal Education Commission (No. KJQN202201226).}

\subjclass[2020]{35B36, 35K65, 35K59, 35A01, 35Q92, 92C17.}%

\keywords{Nutrient taxis system, double degeneracy, two-dimensional domains, weak solutions.}

\begin{abstract}
We are concerned with the following doubly degenerate nutrient taxis system
\begin{align}
\begin{cases}\tag{$\star$}\label{eq-0.1}
u_t=\nabla\cdot(u v\nabla u)-\nabla\cdot(u^{2} v\nabla v)+u-u^2,\\[1mm]
v_t=\Delta v-u v,
\end{cases}
\end{align}
posed in a bounded smooth domain $\Omega\subset\mathbb{R}^2$ under homogeneous Neumann boundary conditions.  
This model was introduced to describe the aggregation patterns of colonies of \emph{Bacillus subtilis} observed on thin agar plates. 
Previous results have established global boundedness in one space dimension and, in two dimensions, under additional assumptions such as small initial data or convex domains (see, e.g., M. Winkler, \textit{Trans. Amer. Math. Soc.}, 2021; M. Winkler, \textit{J. Differ. Equ.}, 2024).
In the presence of the quadratic degradation term in the logistic growth, which markedly enhances the dissipative structure of the system, and by employing a weighted energy method, we prove that for arbitrary smooth initial data the problem \eqref{eq-0.1} admits a global weak solution that remains uniformly bounded in time.  
\end{abstract}
\maketitle

\section{Introduction}\label{section1}

Intricate patterns can emerge in bacterial colonies under various environmental conditions.  
For instance, changes in nutrient availability or the introduction of an attractant can induce different forms of aggregation.  Such patterns can be reproduced in vitro on agar plates. 
In this context, for certain bacterial species such as \emph{Bacillus subtilis}, several studies \cite{1993-CRiM-MatsuyamaMatsushita,1994-N-Ben-JacobSchochetTenenbaumCohenCzirokVicsek} have examined the geometry of these aggregations. In \cite{1992-PASMaiA-Fujikawa,1989-JotPSoJ-FujikawaMatsushita,1990-PASMaiA-MatsushitaFujikawa,1992-JotPSoJ-OhgiwariMatsushitaMatsuyama}, the shapes of different aggregations are analyzed with respect to varying agar and nutrient concentrations. For rigid media—those with high agar concentrations—in the presence of low nutrient levels, complex branching formations have been observed. From a mathematical perspective, nutrient-taxis systems of the following form were considered in \cite{1997-JoTB-KawasakiMochizukiMatsushitaUmedaShigesada} to model these phenomena:
\begin{align}\label{sys:Kaw}
\begin{cases}
u_t=\nabla \cdot (D_u(u,v) \nabla u) + u v, \\[1mm]
v_t=D_v \Delta v - u v,
\end{cases}
\end{align}
where $D_u(u,v)$ denotes the diffusion coefficient of the bacterial cells, and $D_v$ is the constant diffusion coefficient of the nutrient. Experimental evidence suggests that bacteria are essentially immotile when either $u$ or $v$ is low, whereas their motility increases as $u$ or $v$ increases. Motivated by this observation, the authors proposed the simplest diffusion coefficient
\[
D_u(u,v)=u v.
\]
The system \eqref{sys:Kaw}, equipped with homogeneous Neumann boundary conditions in a smoothly bounded convex domain $\Omega \subset \mathbb{R}^n$, was subsequently studied by Winkler \cite{2022-CVPDE-Winkler}, where stabilization of arbitrary structures was established under suitable assumptions on the initial data.

To more accurately describe the formation of such aggregation patterns, Leyva et al.~\cite{2013-PA-LeyvaMalagaPlaza} extended the degenerate diffusion model \eqref{sys:Kaw} to the following doubly degenerate nutrient-taxis system:
\begin{align}\label{SYS:LEYVA}
\begin{cases}
u_t=\nabla \cdot(u v \nabla u)- \nabla \cdot\left(u^{2} v \nabla v\right)+ u v, \\[2mm]
v_t=\Delta v - u v,
\end{cases}
\end{align}
Numerical simulations in \cite{1997-JoTB-KawasakiMochizukiMatsushitaUmedaShigesada, 2000-AiP-Ben-JacobCohenLevine, 2013-PA-LeyvaMalagaPlaza} indicate that, depending on the initial data and parameter conditions, the model \eqref{SYS:LEYVA} can generate a variety of rich branching patterns that are very close to those observed in biological experiments. In \cite{2021-TAMS-Winkler}, Winkler studied \eqref{SYS:LEYVA} in one space dimension, namely the cross-diffusion system
\begin{align}\label{ONESYS:WINKLER}
\begin{cases}
u_t=(u v u_x)_x-(u^2 v v_x)_x+u v, & x\in\Omega,\ t>0,\\[1mm]
v_t=v_{xx}-u v, & x\in\Omega,\ t>0,\\[1mm]
u v u_x-u^2 v v_x=0,\quad v_x=0, & x\in\partial\Omega,\ t>0,\\[1mm]
u(x,0)=u_0(x),\quad v(x,0)=v_0(x), & x\in\Omega,
\end{cases}
\end{align}
with initial data assumed to satisfy
\begin{align}\label{initial-1}
\begin{cases}
u_0\in C^{\vartheta}(\overline{\Omega})\ \text{for some }\vartheta\in(0,1),\quad u_0\geqslant 0,\quad \displaystyle\int_{\Omega}\ln u_0>-\infty,\\[1mm]
v_0\in W^{1,\infty}(\Omega),\quad v_0>0\ \text{in }\overline{\Omega}.
\end{cases}
\end{align}
Using energy estimates, Winkler proved that \eqref{ONESYS:WINKLER} admits a global weak solution which is uniformly bounded in time and converges to an equilibrium in an appropriate topology. 
Subsequently, Li and Winkler \cite{2022-CPAA-LiWinkler} removed the integrability assumption $\int_{\Omega}\ln u_0>-\infty$ in \eqref{initial-1} and obtained analogous results.
In the two-dimensional setting, consider the variant of \eqref{SYS:LEYVA}
\begin{align}\label{0930-1047}
\begin{cases}
u_t=\nabla\cdot(u v \nabla u)-\nabla\cdot\bigl(u^\alpha v \nabla v\bigr)+u v,\\[1mm]
v_t=\Delta v-u v,
\end{cases}
\end{align}
in a smoothly bounded, convex domain $\Omega\subset\mathbb{R}^2$ with homogeneous Neumann boundary conditions, where $\alpha>0$. 
It was shown in \cite{2022-JDE-Li} that the system admits a global weak solution for certain parameter regimes, in particular when $\alpha\in(1,\tfrac{3}{2})$ in two space dimensions, and when $\alpha\in(\tfrac{7}{6},\tfrac{13}{9})$ in three space dimensions. 
In \cite{2024-JDE-Winkler}, Winkler treated a more general class of variants of \eqref{SYS:LEYVA} (including \eqref{0930-1047} as a special case) and obtained global, uniformly bounded weak solutions in bounded convex planar domains under either of the following alternatives: $\alpha<2$ with reasonably regular (possibly large) initial data, or $\alpha=2$ provided the initial nutrient profile $v_0$ satisfies a suitable smallness condition. Recently, Pan \cite{2024-NARWA-Pan} proposed a related variant of \eqref{SYS:LEYVA},
\[
\begin{cases}
u_t=\nabla\cdot(u v \nabla u)-\nabla\cdot(u^2 v \nabla v)+\rho u-\mu u^\kappa,\\[1mm]
v_t=\Delta v-u v,
\end{cases}
\]
and established global existence of weak solutions in higher space dimensions provided \(\kappa>\tfrac{n+2}{2}\), \(\rho,\mu>0\), and the initial data are suitably regular (and may be arbitrarily large). 
In the borderline case \(\kappa=2\), Li and Winkler \cite{2024-AaA-LiWinkler} proved that the model admits global, continuous weak solutions for any reasonably regular initial data in two space dimensions. The above literature is primarily concerned with establishing the global existence of weak solutions to model~\eqref{SYS:LEYVA} in two space dimensions.

Motivated by the aforementioned works, in this paper we focus on the following doubly degenerate nutrient-taxis system with logistic source:
\begin{equation}\label{SYS:MAIN}
\begin{cases}
u_t=\nabla \cdot(u v \nabla u)- \nabla \cdot\bigl(u^{2}v \nabla v\bigr)+ u - u^2, & x \in \Omega, \ t>0,\\[1mm]
v_t=\Delta v-u v, & x \in \Omega, \ t>0,\\[1mm]
\bigl(u v \nabla u- u^{2} v \nabla v\bigr) \cdot \nu = \nabla v \cdot \nu = 0, & x \in \partial \Omega, \ t>0,\\[1mm]
u(x, 0)=u_0(x), \quad v(x, 0)=v_0(x), & x \in \Omega,
\end{cases}
\end{equation}
where \(\Omega \subset \mathbb{R}^2\) is a general smooth bounded domain. 
The scalar functions $u$ and $v$ denote the bacterial density and the nutrient concentration, respectively. 
We assume throughout that the initial data satisfy
\begin{equation}\label{assIniVal}
\begin{cases}
u_0 \in W^{1, \infty}(\Omega), \ u_0 \geqslant 0, \ u_0 \not\equiv 0,\\
v_0 \in W^{1, \infty}(\Omega), \ v_0 >0 \ \text{in } \overline{\Omega}.
\end{cases}
\end{equation}
The purpose of the present paper is to establish the global-in-time existence and uniform-in-time boundedness of solutions to \eqref{SYS:MAIN} with doubly degenerate diffusion on a general smooth bounded domain.

\vskip 3mm
Before stating the main result, we introduce the definition of the weak solution to system \eqref{SYS:MAIN}.
\begin{defn} \label{def-weak-sol}
Let $\Omega \subset \mathbb{R}^2$ be a bounded domain with smooth boundary. Suppose that $u_0 \in L^1(\Omega)$ and $v_0 \in L^1(\Omega)$ are nonnegative. By a global weak solution of the system (\ref{SYS:MAIN}) we mean a pair $(u, v)$ of functions satisfying
\begin{equation}\label{-2.1}
\begin{cases}
u \in L_{\mathrm{loc}}^1(\overline{\Omega} \times[0, \infty)) \quad \text { and } \\
v \in L_{\mathrm{loc}}^{\infty}(\overline{\Omega} \times[0, \infty)) \cap L_{\mathrm{loc}}^1\left([0, \infty) ; W^{1,1}(\Omega)\right)
\end{cases}
\end{equation}
and
\begin{equation}\label{-2.2}
u^2 \in L_{\mathrm{loc}}^1\left([0, \infty) ; W^{1,1}(\Omega)\right) \quad \text { and } \quad u^{2} \nabla v \in L_{\mathrm{loc}}^1\left(\overline{\Omega} \times[0, \infty) ; \mathbb{R}^2\right),
\end{equation}
which are such that
\begin{align}\label{-2.3}
-\int_0^{\infty} \int_{\Omega} u \varphi_t-\int_{\Omega} u_0 \varphi(0)=&-\frac{1}{2} \int_0^{\infty} \int_{\Omega} v  \nabla u^{2} \cdot \nabla \varphi
+ \int_0^{\infty} \int_{\Omega} u^{2} v \nabla v \cdot \nabla \varphi \nonumber\\
& + \int_0^{\infty} \int_{\Omega} u  \varphi - \int_0^{\infty} \int_{\Omega} u^2  \varphi
\end{align}
and
\begin{align}\label{-2.4}
\int_0^{\infty} \int_{\Omega} v \varphi_t+\int_{\Omega} v_0 \varphi(0)=\int_0^{\infty} \int_{\Omega} \nabla v \cdot \nabla \varphi+\int_0^{\infty} \int_{\Omega} u v \varphi
\end{align}
for all $\varphi \in C_0^{\infty}(\overline{\Omega} \times[0, \infty))$.
\end{defn}

We are now in a position to state the main result of this paper.

\begin{thm} \label{thm-1.1}
Let $\Omega \subset \mathbb{R}^2$ be a bounded domain with smooth boundary. Assume that the initial data $\left(u_0, v_0\right)$ satisfies \eqref{assIniVal}. Then there exist functions
\begin{align}\label{solu:property}
\begin{cases}
u \in C^{0}(\overline{\Omega} \times[0, \infty)) \quad \text { and } \\
v \in C^0(\overline{\Omega} \times[0, \infty)) \cap C^{2,1}(\overline{\Omega} \times(0, \infty))
\end{cases}
\end{align}
such that $u \geqslant 0$ and $v>0$ in $\bar{\Omega} \times[0, \infty)$, and that $(u, v)$ solves \eqref{SYS:MAIN} in the sense of Definition \ref{def-weak-sol}, and that $(u, v)$ is bounded in the sense that
\begin{align}\label{solv-1.13}
\|u(t)\|_{L^\infty(\Omega)}+\|v(t)\|_{W^{1, \infty}(\Omega)} \leqslant C, \quad \text { for all } t>0
\end{align} 
where $C>0$ is a constant independent of t.
\end{thm}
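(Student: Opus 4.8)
The plan is to argue by approximation. Since the diffusion $uv$ degenerates wherever $u$ or $v$ vanishes, I would first replace \eqref{SYS:MAIN} by a family of non-degenerate problems, for instance with diffusion and cross-diffusion coefficients $(u_\varepsilon+\varepsilon)(v_\varepsilon+\varepsilon)$ and $(u_\varepsilon+\varepsilon)^2(v_\varepsilon+\varepsilon)$, and if convenient the logistic term mollified near $u=0$; standard quasilinear parabolic theory then yields global classical solutions $(u_\varepsilon,v_\varepsilon)$ with $u_\varepsilon\ge 0$ and $v_\varepsilon>0$, and all subsequent work concerns bounds uniform in $\varepsilon\in(0,1)$ and in $t>0$. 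The first estimates are elementary: the comparison principle applied to $v_{\varepsilon,t}\le\Delta v_\varepsilon$ gives $\|v_\varepsilon(\cdot,t)\|_{L^\infty(\Omega)}\le\|v_0\|_{L^\infty(\Omega)}$, and, integrating the first equation and using $\int_\Omega u_\varepsilon^2\ge|\Omega|^{-1}(\int_\Omega u_\varepsilon)^2$, the mass $y(t):=\int_\Omega u_\varepsilon$ obeys $y'\le y-|\Omega|^{-1}y^2$, so that $\sup_{t>0}\int_\Omega u_\varepsilon\le m$ for some $m=m(u_0,\Omega)$ and also $\int_t^{t+1}\int_\Omega u_\varepsilon^2\le C$ uniformly in $t$. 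Already here the quadratic degradation $-u^2$ is indispensable: it forbids growth of the mass and supplies a first piece of dissipation, and it is the feature that replaces the smallness or convexity hypotheses required in earlier treatments of \eqref{SYS:LEYVA}.

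The core of the argument is a weighted $L^p$-type estimate. Testing the first equation with $u_\varepsilon^{p-1}$ and exploiting the divergence structure $\nabla\!\cdot\!(uv\nabla u)-\nabla\!\cdot\!(u^2v\nabla v)=\nabla\!\cdot\!\bigl(u^2v\,\nabla(\ln u-v)\bigr)$ together with the pointwise identity $v\Delta v=\tfrac12(v^2)_t+uv^2$ furnished by the second equation, one is led to a differential inequality for $\int_\Omega u_\varepsilon^p$ whose left-hand side carries the genuinely degenerate dissipation $\int_\Omega v_\varepsilon|\nabla u_\varepsilon^{(p+1)/2}|^2$ together with the further favourable quantities $\int_\Omega u_\varepsilon^{p+1}|\nabla v_\varepsilon|^2$ and $\int_\Omega u_\varepsilon^{p+2}v_\varepsilon^2$, while the right-hand side is dominated by the super-$(p{+}1)$ absorption term $-\int_\Omega u_\varepsilon^{p+1}$ coming from the logistic source. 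The time derivative produced by $v\Delta v$ is absorbed into an auxiliary functional of the form $\int_\Omega u_\varepsilon^{p+1}v_\varepsilon^2$, after which the remaining cross-diffusion contributions are estimated by Young's inequality combined with two-dimensional Gagliardo–Nirenberg interpolation. Carrying this out first for a fixed $p$ and then along an iteration $p_k\to\infty$, alternating at each stage with parabolic regularity for $v_\varepsilon$ so as to upgrade the integrability of $\nabla v_\varepsilon$, I would arrive at $\sup_{t>0}\|u_\varepsilon(\cdot,t)\|_{L^p(\Omega)}\le C(p)$ for every finite $p$.

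From there the passage to $L^\infty$ is routine: with $u_\varepsilon v_\varepsilon$ bounded in $L^\infty((0,\infty);L^p(\Omega))$ for some $p>2$, smoothing estimates for the Neumann heat semigroup give $\sup_{t>0}\|\nabla v_\varepsilon(\cdot,t)\|_{L^\infty(\Omega)}\le C$, and a Moser-type iteration for the first equation — feasible because $v_\varepsilon$ is bounded and $\nabla v_\varepsilon$ is now under control, so the equation is of uniformly porous-medium type — yields $\sup_{t>0}\|u_\varepsilon(\cdot,t)\|_{L^\infty(\Omega)}\le C$. Time-independent Hölder estimates for degenerate parabolic equations then give equicontinuity of $u_\varepsilon$ on $\overline\Omega\times[0,\infty)$, Schauder theory gives $C^{2,1}$ bounds for $v_\varepsilon$ on compact subsets of $(0,\infty)$, and an Aubin–Lions compactness argument extracts a subsequence converging to a limit $(u,v)$ with the regularity \eqref{solu:property}; one then checks that the identities \eqref{-2.3}–\eqref{-2.4} pass to the limit and that the uniform bounds survive, giving \eqref{solv-1.13}.

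I expect the genuine obstacle to be the weighted $L^p$ inequality of the second step. Closing it requires a delicate choice of the exponents entering the various cross-diffusion terms and careful bookkeeping of which powers of $v_\varepsilon$ accompany each gradient term. Moreover, since $\Omega$ is only assumed smooth and not convex, the boundary integrals that appear when $|\nabla v_\varepsilon|^2$ is differentiated cannot be discarded via the sign condition $\partial_\nu|\nabla v|^2\le 0$ available on convex domains and must instead be controlled through trace and interpolation inequalities, at the cost of an additional smallness argument. It is precisely the super-linear dissipation generated by the $-u^2$ term that makes all of these absorptions succeed for arbitrary, possibly large, initial data on a general bounded smooth domain.
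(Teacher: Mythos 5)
Your high-level scaffolding is sound and matches the paper's structure: approximate, obtain $\varepsilon$- and $t$-independent $L^p$ and then $L^\infty$ bounds, pass to the limit, and observe that the quadratic degradation $-u^2$ is what makes large data and non-convex domains accessible. However, the heart of the argument — the step you yourself flag as "the genuine obstacle" — is where the proposal has a real gap, and the route you sketch there does not close as stated.

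First, you claim that testing the first equation with $u_\varepsilon^{p-1}$, combined with the identity $v\Delta v=\tfrac12(v^2)_t+uv^2$, yields a differential inequality whose left-hand side contains $\int_\Omega u_\varepsilon^{p+1}|\nabla v_\varepsilon|^2$ and $\int_\Omega u_\varepsilon^{p+2}v_\varepsilon^2$ as \emph{favourable} (dissipative) terms. Simply multiplying the $u$-equation by $u_\varepsilon^{p-1}$ and integrating by parts produces, after Young's inequality, only
\begin{align*}
\frac{d}{dt}\int_\Omega u_\varepsilon^p
+ c\int_\Omega u_\varepsilon^{p-1}v_\varepsilon|\nabla u_\varepsilon|^2
+ p\int_\Omega u_\varepsilon^{p+1}
\leqslant C\int_\Omega u_\varepsilon^{p+1}v_\varepsilon|\nabla v_\varepsilon|^2 + p\int_\Omega u_\varepsilon^p,
\end{align*}
in which $\int u_\varepsilon^{p+1}v_\varepsilon|\nabla v_\varepsilon|^2$ is a \emph{bad} term of the same order $p+1$ as the logistic absorption. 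There is no mechanism in the $u$-equation alone through which $\int u^{p+1}|\nabla v|^2$ becomes dissipative. Presumably you intend to couple with an auxiliary functional such as $\int_\Omega u^{p+1}v^2$, whose time derivative indeed produces $-2\int u^{p+1}|\nabla v|^2$ and $-2\int u^{p+2}v^2$, but differentiating that functional also injects the entire right-hand side of the $u$-equation against the weight $u^p v^2$, creating new cross terms of the form $\int u^{p+1}v^2\nabla u\cdot\nabla v$ that are nowhere accounted for. The proposal neither writes down the coupled functional nor verifies that the bookkeeping of all these terms closes; as written, the central estimate is an unchecked assertion.

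Second, your fallback device — "alternating at each stage with parabolic regularity for $v_\varepsilon$ so as to upgrade the integrability of $\nabla v_\varepsilon$" — is circular here. The bad term is $\int u^{p+1}v|\nabla v|^2$, and without prior knowledge that $\nabla v_\varepsilon\in L^\infty_t L^\infty_x$ one cannot absorb it into $-p\int u^{p+1}$; but to get that much smoothing from the heat semigroup one already needs $u_\varepsilon\in L^\infty_tL^q_x$ for some $q>2$. The paper breaks this circle in a quite different way, by introducing the singularly weighted quantity $\int_\Omega|\nabla v_\varepsilon|^4/v_\varepsilon^3$, proving a uniform-in-time bound for it via the energy functional $4b\int_\Omega u_\varepsilon\ln u_\varepsilon + \int_\Omega|\nabla v_\varepsilon|^4/v_\varepsilon^3$ (Lemmas \ref{lemma-3.6} and \ref{lemma-3.8}), and then using the two weighted two-dimensional Sobolev-type inequalities of Lemmas \ref{lemma-3.4} and \ref{lemma-3.5} to control $\int u_\varepsilon^{p+1}v_\varepsilon|\nabla v_\varepsilon|^2$. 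Nothing in your sketch identifies or replaces this machinery, and standard Gagliardo--Nirenberg interpolation applied to $\nabla v$ alone does not account for the $v$-degeneracy in the dissipation $\int u^{p-1}v|\nabla u|^2$.

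Finally, a more minor but non-trivial issue: you propose regularising the \emph{coefficients} of the equation, replacing $uv$ and $u^2v$ by $(u_\varepsilon+\varepsilon)(v_\varepsilon+\varepsilon)$ and $(u_\varepsilon+\varepsilon)^2(v_\varepsilon+\varepsilon)$, but then explicitly exploit the exact divergence structure $\nabla\cdot(u^2v\nabla(\ln u-v))$, which this regularisation destroys. The paper instead regularises only the \emph{initial datum} ($u_{0\varepsilon}=u_0+\varepsilon$), preserving the equation's structure and obtaining positivity of the approximating solutions by the strong maximum principle, so that the original coefficients are locally non-degenerate. If you insist on coefficient regularisation, you would have to track the error terms, which is not done. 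In short, the frame of your argument is right, but the weighted $L^p$ estimate — the only step that is genuinely hard — is not established, and the specific tools that make it work in the paper ($\int|\nabla v|^4/v^3$ control and the lemmas underlying it) are absent from the proposal.
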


\vskip 1mm


\section{Preliminaries}


Before going further, we list some lemmas, which will be used throughout this paper.

\begin{lem}[{\cite[Lemma 3.4]{2014-SJMA-StinnerSurulescuWinkler}}]\label{lem-0926-1}
Let $T>0$, $\tau \in(0, T)$, $a>0$ and $b>0$, and suppose that $z:[0, T) \rightarrow [0, \infty)$ is absolutely continuous such that
$$
z^{\prime}(t)+a z(t) \leqslant h(t)\quad \text { for a.e. } t \in(0, T)
$$
with some nonnegative function $h \in L_{\text {loc }}^1([0, T))$ satisfying
$$
\sup_{0 \leqslant t \leqslant T-\tau} \int_t^{t+\tau} h(s) d s \leqslant b. 
$$
Then
$$
z(t) \leqslant \max \left\{z(0)+b,~\frac{b}{a\tau}+2 b\right\} \quad \text { for all } t \in(0, T).
$$
\end{lem}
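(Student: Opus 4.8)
The plan is to deduce the bound from the integrating‑factor representation of solutions of the differential inequality, combined with a dichotomy that exploits the hypothesis on $\tau$‑windows of $h$. As a first routine step I would note that multiplying $z'(t)+az(t)\le h(t)$ by $e^{at}$ and integrating from an arbitrary $t_0\in[0,t)$ — legitimate since $z$, hence $\sigma\mapsto e^{a\sigma}z(\sigma)$, is absolutely continuous — gives
\[
z(t)\le e^{-a(t-t_0)}z(t_0)+\int_{t_0}^{t}e^{-a(t-\sigma)}h(\sigma)\,\mathrm{d}\sigma\le z(t_0)+\int_{t_0}^{t}h(\sigma)\,\mathrm{d}\sigma ,
\]
because $a>0$ and $h\ge 0$. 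Choosing $t_0=0$ already settles the range $t\le\tau$: there $\int_0^{t}h\le\int_0^{\tau}h\le b$, so $z(t)\le z(0)+b$, which is within the asserted bound.

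For $t>\tau$ the choice $t_0=0$ is useless, since $\int_0^{t}h$ may grow like $(t/\tau)b$; the essential point is that $t_0$ must be kept within distance $\tau$ of $t$. The heart of the proof is thus the following dichotomy, which I would establish for every $s\in(\tau,T)$: \emph{either} $z(s)\le \frac{b}{a\tau}+b$, \emph{or} $z(s)<z(s-\tau)$. To prove it, distinguish whether or not $\min_{[s-\tau,s]}z\le\frac{b}{a\tau}$. If it does, apply the estimate above with $t_0=\sigma_0$, where $\sigma_0\in[s-\tau,s]$ is a point at which the minimum is attained, together with $\int_{\sigma_0}^{s}h\le\int_{s-\tau}^{s}h\le b$, to obtain $z(s)\le\frac{b}{a\tau}+b$. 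If it does not, then $z>\frac{b}{a\tau}$ on all of $[s-\tau,s]$, hence $a\int_{s-\tau}^{s}z>b$, and integrating $z'+az\le h$ over $[s-\tau,s]$ yields $z(s)-z(s-\tau)\le \int_{s-\tau}^{s}h-a\int_{s-\tau}^{s}z<b-b=0$.

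It then remains to iterate this dichotomy along the finite arithmetic progression $t_j:=t-j\tau$. Given $t>\tau$, let $N\ge 1$ be the unique integer with $t_N\in(0,\tau]$; then $t_0,\dots,t_{N-1}$ all lie in $(\tau,T)$ (so the dichotomy applies at each), while $t_N\in(0,\tau]$ (so the short‑time bound applies there). If some index $j\in\{0,\dots,N-1\}$ has $z(t_j)\le\frac{b}{a\tau}+b$, pick the smallest such one; for $i<j$ we then have $z(t_i)>\frac{b}{a\tau}+b$, so the second alternative of the dichotomy forces $z(t)=z(t_0)<z(t_1)<\dots<z(t_j)\le\frac{b}{a\tau}+b$. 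If no such index exists, the second alternative at every step gives $z(t_0)<z(t_1)<\dots<z(t_N)$, and $z(t_N)\le z(0)+b$ by the short‑time estimate. Either way $z(t)\le\max\{z(0)+b,\ \tfrac{b}{a\tau}+b\}$, which is dominated by the asserted $\max\{z(0)+b,\ \tfrac{b}{a\tau}+2b\}$. I expect the main subtlety to lie in calibrating the threshold $\frac{b}{a\tau}$ so that a window on which $z$ stays large genuinely forces a strict decrease of $z$ across that window; once that dichotomy is in hand, the descending‑chain bookkeeping (checking $t_j>\tau$ for $j\le N-1$ and $t_N>0$, and that the relevant windows $[s-\tau,s]$ satisfy $0\le s-\tau\le T-\tau$) is routine.
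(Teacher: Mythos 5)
The paper does not prove this lemma at all: it is imported verbatim from \cite[Lemma 3.4]{2014-SJMA-StinnerSurulescuWinkler}, so there is no in-paper argument to compare against; what matters is whether your self-contained proof is sound, and it is. Your integrating-factor step, the short-time bound $z(t)\leqslant z(0)+b$ on $(0,\tau]$ (the window $[0,\tau]$ is admissible because $\tau<T$), the dichotomy for $s\in(\tau,T)$ (if $\min_{[s-\tau,s]}z\leqslant\frac{b}{a\tau}$ then $z(s)\leqslant\frac{b}{a\tau}+b$ via the variation-of-constants bound from the minimizer; otherwise $a\int_{s-\tau}^{s}z>b$ and integrating $z'+az\leqslant h$ over $[s-\tau,s]$ forces $z(s)<z(s-\tau)$), and the descending-chain iteration along $t_j=t-j\tau$ are all correct, including the admissibility checks $0\leqslant s-\tau\leqslant T-\tau$ for every window used and $t_N\in(0,\tau]$ for the terminal step. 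Your argument in fact yields the slightly sharper bound $\max\{z(0)+b,\ \frac{b}{a\tau}+b\}$, which dominates the asserted $\max\{z(0)+b,\ \frac{b}{a\tau}+2b\}$ from below, so the lemma follows a fortiori; the threshold $\frac{b}{a\tau}$ is calibrated exactly so that a window on which $z$ stays above it consumes more dissipation $a\int z$ than the available forcing $b$, which is the same mechanism that produces the constant $\frac{b}{a\tau}+2b$ in the cited source, reached there by a slightly different bookkeeping (tracking the connected component on which $z$ exceeds the threshold rather than stepping back in increments of $\tau$).
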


\begin{lem}[{\cite[Chapter~3, Lemma~1.1]{1997--Temam}}]\label{lem-0926-2}
Let $T>0$, $\tau \in\left(0, T\right)$. Suppose that $a$, $b$, $z$ are three positive locally integrable functions on $(0, T)$  such that $z^{\prime}(t)$ is locally integrable on $(0, T)$ and the following inequalities are satisfied:
$$
z^{\prime}(t) \leqslant a(t) z(t)+b(t)\quad \text { for all } t \in\left(0, T\right)
$$
as well as
$$
\sup_{0 \leqslant t \leqslant T-\tau} \int_t^{t+\tau} a(s)ds \leqslant a_1,~~ \sup_{0 \leqslant t \leqslant T-\tau} \int_t^{t+\tau} b(s)ds   \leqslant a_2 \text { and } ~~ \sup_{0 \leqslant t \leqslant T-\tau} \int_t^{t+\tau} z(s)ds   \leqslant a_3,
$$
where $a_i(i=1,2,3)$ are positive constants. Then
$$
z(t) \leqslant \left(\frac{a_3}{\tau}+a_2\right) e^{a_1} \quad  \text { for all } t \in\left(0, T\right).
$$
\end{lem}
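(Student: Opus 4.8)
The plan is to realize \eqref{SYS:MAIN} as a limit of non-degenerate problems, to derive a priori bounds uniform with respect to the regularization, and to pass to the limit; the only genuinely new ingredient is a weighted energy inequality that is fueled by the quadratic degradation term. I would first replace the doubly degenerate coefficients by non-degenerate ones --- say substituting $uv$ and $u^2 v$ by $(u_\varepsilon+\varepsilon)(v_\varepsilon+\varepsilon)$ and $(u_\varepsilon+\varepsilon)^2(v_\varepsilon+\varepsilon)$, keeping the divergence structure and the source $u-u^2$, and mollifying $u_0$ if necessary --- so that for each $\varepsilon\in(0,1)$ a positive classical solution $(u_\varepsilon,v_\varepsilon)$ exists, globally in time once the bounds below are established. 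Integrating the first equation and using Jensen's inequality turns the source into $\frac{d}{dt}\int_\Omega u_\varepsilon\le\int_\Omega u_\varepsilon-\frac1{|\Omega|}\bigl(\int_\Omega u_\varepsilon\bigr)^2$, whence $\sup_{t>0}\int_\Omega u_\varepsilon(\cdot,t)\le C$ and, after one more integration in time, $\sup_{t>0}\int_t^{t+1}\!\int_\Omega u_\varepsilon^2\le C$. The comparison principle for the second equation gives $0<v_\varepsilon\le\|v_0\|_{L^\infty(\Omega)}+\varepsilon$, and testing it by $-\Delta v_\varepsilon$ (no boundary term, since $\partial_\nu v_\varepsilon=0$) together with the space-time $L^2$ bound on $u_\varepsilon$ yields $\sup_{t>0}\int_\Omega|\nabla v_\varepsilon(\cdot,t)|^2\le C$ and $\sup_{t>0}\int_t^{t+1}\!\int_\Omega|\Delta v_\varepsilon|^2\le C$. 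None of this needs convexity of $\Omega$ or smallness of the data; these are precisely the dissipative effects added by the $-u^2$ term.

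The heart of the argument is to bound a weighted energy of the form $\int_\Omega\phi(v_\varepsilon)\psi(u_\varepsilon)$ --- for instance $\psi(u)=u^p$ or $\psi(u)=u\ln u$ together with $\phi$ a power of $v$, the point being that testing the first equation by $\phi(v_\varepsilon)^{-1}\psi'(u_\varepsilon)$ removes $v$ from the diffusive dissipation, converting $\int_\Omega u_\varepsilon v_\varepsilon|\nabla u_\varepsilon|^2$ into $\int_\Omega u_\varepsilon\psi''(u_\varepsilon)|\nabla u_\varepsilon|^2$ --- coupled to a nutrient functional such as $\int_\Omega|\nabla v_\varepsilon|^4$, whose evolution, with the boundary term on the general domain handled by a trace inequality, supplies the dissipation $\int_\Omega|\nabla v_\varepsilon|^2|D^2 v_\varepsilon|^2$. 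After a Young splitting the taxis cross term leaves a contribution of the type $\int_\Omega u_\varepsilon^{p+1}v_\varepsilon|\nabla v_\varepsilon|^2$, which I would absorb using, on one side, the source-generated dissipation $\int_\Omega u_\varepsilon^{p+1}$ --- another gift of the quadratic degradation --- and, on the other, the planar Gagliardo--Nirenberg inequality to interpolate $\int_\Omega u_\varepsilon^2|\nabla v_\varepsilon|^2\le\|u_\varepsilon\|_{L^4(\Omega)}^2\|\nabla v_\varepsilon\|_{L^4(\Omega)}^2$ against the two available dissipations. Feeding the resulting differential inequality into Lemma \ref{lem-0926-1} (or Lemma \ref{lem-0926-2}, using that $\sup_t\int_t^{t+1}\int_\Omega u_\varepsilon\ln u_\varepsilon\le\sup_t\int_t^{t+1}\int_\Omega u_\varepsilon^2\le C$) then yields $\sup_{t>0}\bigl(\int_\Omega u_\varepsilon\ln u_\varepsilon+\int_\Omega|\nabla v_\varepsilon|^4\bigr)\le C$ uniformly in $\varepsilon$.

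With this in hand I would iterate the $L^p$-scale of the first equation --- testing by $u_\varepsilon^{p-1}$, where the source contributes $-p\int_\Omega u_\varepsilon^{p+1}$ and dominates the corresponding cross-diffusion term once $\nabla v_\varepsilon$ is known strongly enough --- to reach $\sup_{t>0}\|u_\varepsilon(\cdot,t)\|_{L^p(\Omega)}\le C(p)$ for every finite $p$; by $L^p$--$L^q$ estimates for the Neumann heat semigroup applied to $v_t=\Delta v-uv$ this upgrades to $\sup_{t>0}\|v_\varepsilon(\cdot,t)\|_{W^{1,\infty}(\Omega)}\le C$, and a Moser-type iteration adapted to the degenerate structure (now that the coefficients are bounded above and $v_\varepsilon$ is bounded below on compact time intervals) gives $\sup_{t>0}\|u_\varepsilon(\cdot,t)\|_{L^\infty(\Omega)}\le C$. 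H\"older estimates for degenerate parabolic equations of porous-medium/$p$-Laplacian type (in the spirit of DiBenedetto, Porzio and Vespri), together with parabolic regularity for $v_\varepsilon$, give $\varepsilon$-independent equicontinuity; Arzel\`a--Ascoli and weak compactness then extract $u_\varepsilon\to u$ in $C^0_{\mathrm{loc}}(\overline{\Omega}\times[0,\infty))$ and $v_\varepsilon\to v$ in $C^0_{\mathrm{loc}}\cap C^{2,1}_{\mathrm{loc}}(\overline{\Omega}\times(0,\infty))$, with $\nabla u_\varepsilon^2\rightharpoonup\nabla u^2$ and $u_\varepsilon^2 v_\varepsilon\nabla v_\varepsilon\rightharpoonup u^2 v\nabla v$ in $L^1_{\mathrm{loc}}$, which is enough to pass to the limit in \eqref{-2.3}--\eqref{-2.4}; the uniform bound \eqref{solv-1.13} is inherited, while $u\ge 0$ and $v>0$ follow from the sign properties of $(u_\varepsilon,v_\varepsilon)$ and the strong maximum principle for the now classical $v$-equation.

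The main obstacle is closing the weighted energy inequality. Double degeneracy makes the diffusive dissipation vanish precisely where $u_\varepsilon$ or $v_\varepsilon$ is small, so it cannot by itself absorb the taxis term $\int_\Omega u_\varepsilon^2 v_\varepsilon|\nabla v_\varepsilon|^2$; one is therefore forced to rely on the logistic dissipation $\int_\Omega u_\varepsilon^{p+1}$, which nonetheless still carries the parasitic factor $|\nabla v_\varepsilon|^2$ and hence demands simultaneous control of $\nabla v_\varepsilon$ in $L^4$ or better --- so that the $u$-integrability and the $\nabla v$-regularity must be bootstrapped together, with all constants independent of $\varepsilon$ and $t$. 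A secondary difficulty is that on a non-convex domain the higher-order $\nabla v$-estimates pick up boundary terms that must be tamed via trace inequalities, and that the degeneracy precludes classical Schauder theory, so one must check carefully that the weak limits above are strong enough to recover the nonlinear flux terms in Definition \ref{def-weak-sol}.
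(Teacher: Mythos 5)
Your proposal does not address the statement at hand. The statement is Lemma \ref{lem-0926-2}, the uniform Gronwall lemma quoted from Temam: a purely ODE-level assertion that a locally integrable $z$ satisfying $z'(t)\leqslant a(t)z(t)+b(t)$, with window-integral bounds $a_1,a_2,a_3$ for $a$, $b$, $z$ over intervals of length $\tau$, obeys $z(t)\leqslant\bigl(\frac{a_3}{\tau}+a_2\bigr)e^{a_1}$. What you have written instead is a strategy for the paper's main result, Theorem \ref{thm-1.1} (regularization of the doubly degenerate system, mass and energy estimates, an $L^p$ iteration, Moser iteration, and a compactness passage to the limit). None of those steps prove, or even use in their proof, the inequality asserted in the lemma; the lemma is one of the tools your scheme would invoke, not the object to be established. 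So, measured against the statement you were asked to prove, the proposal is a complete miss, independently of whether it would work as an outline for Theorem \ref{thm-1.1}.

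For the record, the lemma itself has a short self-contained proof (the paper simply cites \cite[Chapter~3, Lemma~1.1]{1997--Temam}): fix $t_0$ and $t_0\leqslant s\leqslant t\leqslant t_0+\tau$; multiplying the differential inequality by $\exp\bigl(-\int_{s}^{\cdot}a\bigr)$ and integrating from $s$ to $t$ gives
\begin{align*}
z(t)\leqslant\Bigl(z(s)+\int_{t_0}^{t_0+\tau}b(\sigma)\,d\sigma\Bigr)\exp\Bigl(\int_{t_0}^{t_0+\tau}a(\sigma)\,d\sigma\Bigr)\leqslant\bigl(z(s)+a_2\bigr)e^{a_1},
\end{align*}
and averaging this over $s\in(t_0,t_0+\tau)$, using $\int_{t_0}^{t_0+\tau}z\leqslant a_3$, yields $z(t_0+\tau)\leqslant\bigl(\frac{a_3}{\tau}+a_2\bigr)e^{a_1}$. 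Note that this argument gives the bound for $t\geqslant\tau$ (for $t<\tau$ there is no full preceding window), which is how Temam states it; if you want the conclusion literally for all $t\in(0,T)$ as written in the paper, you must either remark on this restriction or argue separately on $(0,\tau)$. Any correct submission for this statement should consist of this kind of elementary Gronwall-plus-averaging argument, not a program for the PDE theorem.
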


\begin{lem}[{\cite[Lemma 2.3]{2018-MMMAS-PangWang}}]\label{lem-0926-3}
Let $T \in(0, \infty]$, $\tau \in\left(0, T\right)$ and suppose that $z$ is a non-negative absolutely continuous function satisfying
$$
z^{\prime}(t)+a(t) z(t) \leqslant b(t) z(t)+c(t) \quad \text { for a.e. } t \in(0, T)
$$
with some functions $a(t)>0$, $b(t) \geqslant 0$, $c(t) \geqslant 0$, and $a,b,c \in L_{\text {loc }}^1(0, T)$ for which there exist $b_1>0$, $c_1>0$ and $\varrho>0$ such that
$$
\sup_{0 \leqslant t \leqslant T-\tau} \int_t^{t+\tau} b(s) d s \leqslant b_1, \quad \sup _{0 \leqslant t \leqslant T-\tau} \int_t^{t+\tau} c(s) d s \leqslant c_1
$$
and
$$
\int_t^{t+\tau} a(s) d s-\int_t^{t+\tau} b(s) d s \geqslant \varrho \quad \text { for any } t \in(0, T-\tau).
$$
Then
$$
z(t) \leqslant z(0) e^{b_1}+\frac{c_1 e^{2 b_1}}{1-e^{-\varrho}}+c_1 e^{b_1} \quad \text { for all } t \in(0, T).
$$
\end{lem}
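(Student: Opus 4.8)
\medskip
\textbf{Proof idea.} The assertion is a ``uniform Gr\"onwall'' estimate, so the plan is to combine a pointwise Gr\"onwall comparison on time windows of length $\tau$ with a geometric iteration fed by the uniform gain $\varrho$.

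I would first rewrite the hypothesis as $z'(t)\le(b(t)-a(t))z(t)+c(t)$ for a.e. $t\in(0,T)$; since $a,b,c\in L^1_{\mathrm{loc}}$ and $z$ is absolutely continuous, multiplying by the integrating factor $\exp\bigl(\int_s^t(b-a)\bigr)$ and integrating gives, for all $0\le s\le t<T$,
\[
z(t)\le z(s)\,e^{\int_s^t(b-a)}+\int_s^t e^{\int_\sigma^t(b-a)}\,c(\sigma)\,d\sigma .
\]
The two facts that make this useful are: (i) over any admissible window $[\sigma,\sigma+\tau]\subset[0,T]$ the last hypothesis yields $\int_\sigma^{\sigma+\tau}(b-a)\le-\varrho$; and (ii) dropping the nonnegative term $a$, for any subinterval $[s,t]$ of length $\le\tau$ one has $\int_s^t(b-a)\le\int_s^t b\le b_1$, and likewise $\int_s^t c\le c_1$.

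I would then run the iteration along the grid $t_k:=k\tau$ (those with $t_k<T$): applying the displayed inequality on $[t_k,t_{k+1}]$ and using (i)--(ii) produces the linear recursion $z(t_{k+1})\le e^{-\varrho}z(t_k)+c_1e^{b_1}$, hence after summing the geometric series
\[
z(t_k)\le e^{-k\varrho}z(0)+c_1e^{b_1}\sum_{j\ge0}e^{-j\varrho}\le z(0)+\frac{c_1e^{b_1}}{1-e^{-\varrho}} .
\]
Finally, for arbitrary $t\in(0,T)$ I would choose $k$ with $t\in[t_k,t_{k+1})$ and apply the displayed inequality once more on $[t_k,t]$, an interval of length $<\tau$, so that its exponent is $\le b_1$ and the source integral is $\le c_1$; this gives $z(t)\le e^{b_1}z(t_k)+c_1e^{b_1}\le z(0)e^{b_1}+\frac{c_1e^{2b_1}}{1-e^{-\varrho}}+c_1e^{b_1}$, which is exactly the claimed bound.

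The analysis here is entirely elementary; the one point requiring care --- the ``main obstacle'', such as it is --- is the bookkeeping of windows and constants: one must check that every window $[\sigma,\sigma+\tau]$ invoked in (i)--(ii) genuinely lies in $[0,T]$, which forces a small adjustment when the relevant interval sits near $t=0$ (where the hypothesis is used at $\sigma=0$, justified by continuity of $\sigma\mapsto\int_\sigma^{\sigma+\tau}(\cdot)$) or near $t=T$ when $T<\infty$ (where the last partial window must be slid leftwards so as to fit inside $[T-\tau,T]$), and one must track the factors so that precisely $e^{b_1}$, $e^{2b_1}$ and $(1-e^{-\varrho})^{-1}$ appear. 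An alternative that bypasses the grid altogether is to integrate once on $[0,t]$ with the factor $\exp\bigl(\int_0^t(a-b)\bigr)$ and bound $\int_s^t(a-b)\ge\varrho\lfloor(t-s)/\tau\rfloor-b_1$; this yields the slightly stronger, hence more than sufficient, estimate $z(t)\le z(0)e^{b_1}+\frac{c_1e^{b_1}}{1-e^{-\varrho}}$.
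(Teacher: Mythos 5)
The paper offers no proof of this lemma; it is quoted verbatim from Pang and Wang \cite[Lemma 2.3]{2018-MMMAS-PangWang}, so there is no internal argument to compare against. Your proof is correct and is essentially the standard one (and matches the spirit of the cited source, whose proof is close to your single integrating-factor ``alternative''): the windowed Gr\"onwall recursion $z(t_{k+1})\leqslant e^{-\varrho}z(t_k)+c_1e^{b_1}$, the geometric summation, and the final partial window use only the admissible bounds $\int b\leqslant b_1$, $\int c\leqslant c_1$ on subintervals of length at most $\tau$ together with the gain $\varrho$ on full windows, and the constants assemble to exactly $z(0)e^{b_1}+\frac{c_1e^{2b_1}}{1-e^{-\varrho}}+c_1e^{b_1}$, with the endpoint bookkeeping at $t=0$ (limit of the hypothesis as the window slides to $0$) and near $t=T$ (sliding the last window into $[T-\tau,T]$) handled correctly.
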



\section{Local existence and basic properties of solutions.}\label{section3}

To construct a weak solution to the system \eqref{SYS:MAIN}, we consider the following regularized problem for $\varepsilon \in(0,1)$:
\begin{align}\label{sys-regul}
\begin{cases}
u_{\varepsilon t}=\nabla \cdot\left(u_{\varepsilon} v_{\varepsilon} \nabla u_{\varepsilon}\right)
  - \nabla \cdot\left(u^{2}_{\varepsilon} v_{\varepsilon} \nabla v_{\varepsilon}\right)+ u_{\varepsilon} - u_{\varepsilon}^2, 
    & x \in \Omega, t>0, \\ 
v_{\varepsilon t}=\Delta v_{\varepsilon}-u_{\varepsilon} v_{\varepsilon}, 
   & x \in \Omega, t>0, \\ 
\frac{\partial u_{\varepsilon}}{\partial \nu}=\frac{\partial v_{\varepsilon}}{\partial \nu}=0, 
   & x \in \partial \Omega, t>0, \\ 
u_{\varepsilon}(x, 0)=u_{0 \varepsilon}(x)=u_0(x)+\varepsilon, \quad 
   v_{\varepsilon}(x, 0)=v_{0 \varepsilon}(x)=v_0(x), 
   & x \in \Omega.
\end{cases}
\end{align}
 
Using Amann’s theory \cite{1989-MZ-Amann}, one can derive the following local existence result; see also \cite[Lemma 2.1]{2022-NARWA-Winkler}. For convenience, we first state the local existence result for classical solutions to \eqref{sys-regul}.

\begin{lem}\label{lemma-2.1}
Let $\Omega \subset \mathbb{R}^2$ be a bounded domain with smooth boundary, and suppose that (\ref{assIniVal}) holds. Then for each $\varepsilon \in(0,1)$, there exist $T_{\max, \varepsilon} \in(0, \infty]$ and at least one pair $\left(u_{\varepsilon}, v_{\varepsilon}\right)$ of functions
\begin{align}\label{-2.6}
\begin{cases}
u_{\varepsilon} \in C^0\left(\overline{\Omega} \times\left[0, T_{\max, \varepsilon}\right)\right) \cap C^{2,1}\left(\overline{\Omega} \times\left(0, T_{\max, \varepsilon}\right)\right) \quad \text{ and } \\
v_{\varepsilon} \in \cap_{q > 2} C^0\left(\left[0, T_{\max, \varepsilon}\right); W^{1, q}(\Omega)\right) \cap C^{2,1}\left(\overline{\Omega} \times\left(0, T_{\max, \varepsilon}\right)\right)
\end{cases}
\end{align}
which are such that $u_{\varepsilon}>0$ and $v_{\varepsilon}>0$ in $\overline{\Omega} \times\left[0, T_{\max, \varepsilon}\right)$, that $\left(u_{\varepsilon}, v_{\varepsilon}\right)$ solves \eqref{sys-regul} in the classical sense, and that
\begin{align}\label{-2.7}
if \text{  } T_{\max, \varepsilon}<\infty, \quad \text { then } \quad \limsup _{t \nearrow T_{\max, \varepsilon}}\left\|u_{\varepsilon}(t)\right\|_{L^{\infty}(\Omega)}=\infty.
\end{align}
In addition, this solution satisfies
\begin{align}\label{-2.9}
\left\|v_{\varepsilon}(t)\right\|_{L^{\infty}(\Omega)} \leqslant \|v_0\|_{L^{\infty}(\Omega)}, \quad \text { for all } t \in\left(0, T_{\max, \varepsilon}\right)
\end{align}
\end{lem}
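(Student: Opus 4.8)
The plan is to regard \eqref{sys-regul} as a quasilinear parabolic system for the pair $w_\varepsilon=(u_\varepsilon,v_\varepsilon)$ and to invoke Amann's theory \cite{1989-MZ-Amann}. Written in divergence form, $\partial_t w_\varepsilon=\nabla\cdot\bigl(\mathcal A(w_\varepsilon)\nabla w_\varepsilon\bigr)+\mathcal F(w_\varepsilon)$, the system has diffusion matrix
\[
\mathcal A(u,v)=\begin{pmatrix} uv & -u^{2}v\\[1mm] 0 & 1\end{pmatrix},
\]
which is upper triangular with diagonal entries $uv$ and $1$, so its spectrum $\{uv,1\}$ lies on the positive real axis whenever $u>0$ and $v>0$; hence $\mathcal A$ is normally elliptic on the positivity cone $\{u>0,\ v>0\}$. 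Since $u_{0\varepsilon}=u_0+\varepsilon\geqslant\varepsilon>0$ and $v_{0\varepsilon}=v_0>0$ on $\overline\Omega$ by \eqref{assIniVal}, the initial datum lies in this cone, and the $W^{1,\infty}(\Omega)$ regularity of $(u_0,v_0)$ meets the requirements of Amann's framework (with $W^{1,q}(\Omega)$, $q>2$, as ground space for the $v$-component, cf.~\eqref{-2.6}). This produces $T_{\max,\varepsilon}\in(0,\infty]$ and a maximal classical solution with the regularity \eqref{-2.6}, together with Amann's extensibility criterion: if $T_{\max,\varepsilon}<\infty$, then as $t\nearrow T_{\max,\varepsilon}$ the solution either becomes unbounded in the relevant norm or leaves every compact subset of $\{u>0,\ v>0\}$ (see also \cite[Lemma~2.1]{2022-NARWA-Winkler}).

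Next I would establish the bounds on $v_\varepsilon$. Since $v\equiv0$ solves the second equation of \eqref{sys-regul} under homogeneous Neumann conditions while $v_{0\varepsilon}=v_0>0$ in $\overline\Omega$, the strong maximum principle gives $v_\varepsilon>0$ in $\overline\Omega\times[0,T_{\max,\varepsilon})$; comparison with the spatially homogeneous subsolution $t\mapsto(\min_{\overline\Omega}v_0)\exp\!\bigl(-\int_0^t\|u_\varepsilon(s)\|_{L^\infty(\Omega)}\d s\bigr)$ furnishes, for each $T<T_{\max,\varepsilon}$, a positive lower bound for $v_\varepsilon$ on $\overline\Omega\times[0,T]$ as long as $\|u_\varepsilon\|_{L^\infty}$ stays bounded there. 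Moreover, because $-u_\varepsilon v_\varepsilon\leqslant0$, $v_\varepsilon$ is a subsolution of the heat equation, so the maximum principle yields \eqref{-2.9}. Finally, once $u_\varepsilon v_\varepsilon$ is bounded in $L^\infty$, standard parabolic regularity for the linear equation $v_{\varepsilon t}=\Delta v_\varepsilon-u_\varepsilon v_\varepsilon$ controls $v_\varepsilon$ in $C^{1+\theta,(1+\theta)/2}(\overline\Omega\times[0,T])$ and hence bounds $\nabla v_\varepsilon$ and $\Delta v_\varepsilon$ on finite time intervals.

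For $u_\varepsilon$, a comparison argument with the solution $u\equiv0$ of the first equation of \eqref{sys-regul}---all of whose terms, including $u-u^{2}$, vanish at $u=0$---shows $u_\varepsilon\geqslant0$. To upgrade this to a positive lower bound I would evaluate the first equation, written in non-divergence form, at a point where $u_\varepsilon(\cdot,t)$ attains its spatial minimum $m_\varepsilon(t):=\min_{\overline\Omega}u_\varepsilon(\cdot,t)$: there $\nabla u_\varepsilon=0$ (on $\partial\Omega$ one uses the Neumann condition) and $\Delta u_\varepsilon\geqslant0$, so that, with $C_\varepsilon(T):=1+\|\nabla v_\varepsilon\|_{L^\infty(\Omega\times(0,T))}^{2}+\|v_\varepsilon\Delta v_\varepsilon\|_{L^\infty(\Omega\times(0,T))}$,
\[
m_\varepsilon'(t)\;\geqslant\;m_\varepsilon(t)\bigl(1-C_\varepsilon(T)\,m_\varepsilon(t)\bigr)\qquad\text{for a.e. }t\in(0,T).
\]
By the $v_\varepsilon$-bounds of the previous step, $C_\varepsilon(T)<\infty$ for every $T<T_{\max,\varepsilon}$ at which $\|u_\varepsilon\|_{L^\infty}$ is finite, and ODE comparison with $m_\varepsilon(0)=\varepsilon>0$ then forces $m_\varepsilon(t)\geqslant c_\varepsilon(T)>0$ on $[0,T]$; in particular $u_\varepsilon>0$ throughout $\overline\Omega\times[0,T_{\max,\varepsilon})$. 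Putting the pieces together: if $T_{\max,\varepsilon}<\infty$ but $\limsup_{t\nearrow T_{\max,\varepsilon}}\|u_\varepsilon(t)\|_{L^\infty(\Omega)}<\infty$, then $\|v_\varepsilon\|_{L^\infty}$, $1/\inf v_\varepsilon$ and $1/\inf u_\varepsilon$ all stay bounded on $[0,T_{\max,\varepsilon})$, so $w_\varepsilon$ remains in a compact subset of the positivity cone, and parabolic estimates bound it in the norm relevant to Amann's criterion---contradicting the extensibility alternative. This proves \eqref{-2.7}.

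I expect the main obstacle to be exactly this final reduction. Amann's abstract criterion only asserts that a finite-time maximal existence interval entails blow-up in norm \emph{or} degeneration of the ellipticity (i.e.\ $\inf u_\varepsilon\to0$ or $\inf v_\varepsilon\to0$), so the crux is to exclude degeneration on finite time intervals under the sole assumption that $\|u_\varepsilon(t)\|_{L^\infty}$ remains bounded. For $v_\varepsilon$ this is the elementary ODE comparison above; for $u_\varepsilon$ it hinges on the favourable sign of the zero-order term $+u_\varepsilon$ in the first equation, which through the pointwise-minimum argument prevents the (doubly degenerate) bacterial density from reaching zero in finite time.
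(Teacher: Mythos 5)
The paper does not reproduce a proof of Lemma \ref{lemma-2.1}: it simply invokes Amann's theory \cite{1989-MZ-Amann} and points to \cite[Lemma~2.1]{2022-NARWA-Winkler}, so you are reconstructing the standard argument. Your strategy --- normal ellipticity on the positivity cone via the upper-triangular diffusion matrix, the heat-equation comparison giving $\|v_\varepsilon\|_{L^\infty}\leqslant\|v_0\|_{L^\infty}$, positive lower bounds for $v_\varepsilon$ by ODE comparison and for $u_\varepsilon$ by the pointwise-minimum argument, and the resulting reduction of Amann's two-alternative blow-up criterion to the single alternative in \eqref{-2.7} --- is exactly the route used in the cited literature and is correct in structure.

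There is one small but genuine slip in the $u_\varepsilon$-positivity step. You set
\[
C_\varepsilon(T)=1+\|\nabla v_\varepsilon\|^{2}_{L^\infty(\Omega\times(0,T))}+\|v_\varepsilon\Delta v_\varepsilon\|_{L^\infty(\Omega\times(0,T))}
\]
and assert that this is finite because parabolic regularity controls $v_\varepsilon$ in $C^{1+\theta,(1+\theta)/2}(\overline\Omega\times[0,T])$ and ``hence bounds $\nabla v_\varepsilon$ and $\Delta v_\varepsilon$.'' But $C^{1+\theta}$ regularity controls $\nabla v_\varepsilon$ up to $t=0$ and says nothing about $\Delta v_\varepsilon$ there: with $v_0$ only in $W^{1,\infty}(\Omega)$ as in \eqref{assIniVal}, one expects $\|\Delta v_\varepsilon(\cdot,t)\|_{L^\infty(\Omega)}$ to blow up like $t^{-1/2}$ as $t\searrow0$, so $\|v_\varepsilon\Delta v_\varepsilon\|_{L^\infty(\Omega\times(0,T))}$, and with it $C_\varepsilon(T)$, may well be infinite. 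This is easy to repair and should be: either run the minimum-point argument only for $t\geqslant t_1>0$ (continuity and $u_\varepsilon(\cdot,0)\geqslant\varepsilon$ give a positive lower bound on $[0,t_1]$, and $m_\varepsilon(t_1)>0$ serves as the new initial value on $[t_1,T]$ where $v_\varepsilon\in C^{2,1}$), or substitute $w=1/m_\varepsilon$ so the Bernoulli-type inequality becomes $w'\leqslant -w+f(t)$ with $f(t)=1+\|\nabla v_\varepsilon(\cdot,t)\|^2_{L^\infty(\Omega)}+\|v_\varepsilon(\cdot,t)\Delta v_\varepsilon(\cdot,t)\|_{L^\infty(\Omega)}$, which only requires $\int_0^T f(s)\,\mathrm{d}s<\infty$ --- and that integral converges despite the $t^{-1/2}$ singularity.
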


Next, we present some basic properties of solutions to the approximate problem \eqref{sys-regul}.


\begin{lem}\label{lem-1st-est}
Assume that \eqref{assIniVal} holds. Then there exists a positive constant $m$ such that
\begin{align}\label{-3.4}
\int_{\Omega} u_{\varepsilon} \leqslant m \quad \text { for all } t \in\left(0, T_{\max , \varepsilon}\right) \text { and } \varepsilon \in(0,1).
\end{align}
\end{lem}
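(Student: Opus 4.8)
The plan is to integrate the first equation of the regularized system \eqref{sys-regul} over $\Omega$ and exploit the conservative form of the diffusion and cross-diffusion terms together with the dissipative logistic nonlinearity. Since the no-flux boundary condition $\partial_\nu u_\varepsilon = \partial_\nu v_\varepsilon = 0$ makes $\nabla\cdot(u_\varepsilon v_\varepsilon\nabla u_\varepsilon)$ and $\nabla\cdot(u_\varepsilon^2 v_\varepsilon\nabla v_\varepsilon)$ integrate to zero, we obtain
\begin{equation}\label{plan-eq-1}
\dt\int_\Omega u_\varepsilon = \int_\Omega u_\varepsilon - \int_\Omega u_\varepsilon^2 \quad\text{for all } t\in(0,T_{\max,\varepsilon}).
\end{equation}

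Next I would control the quadratic term from below using the Cauchy--Schwarz inequality on the bounded domain: $\int_\Omega u_\varepsilon^2 \geqslant \frac{1}{|\Omega|}\bigl(\int_\Omega u_\varepsilon\bigr)^2$. Writing $y(t) := \int_\Omega u_\varepsilon(\cdot,t)$, this turns \eqref{plan-eq-1} into the scalar differential inequality
\begin{equation}\label{plan-eq-2}
y'(t) \leqslant y(t) - \frac{1}{|\Omega|}\,y(t)^2,
\end{equation}
a Bernoulli-type (logistic) ODE. A standard comparison argument then yields an explicit bound: either one invokes the classical fact that solutions of $y' \leqslant y - c y^2$ satisfy $\limsup_{t\to\infty} y(t) \leqslant 1/c$ and stay below $\max\{y(0), 1/c\}$ for all $t$, or one simply notes that $y(t) \leqslant \max\{\,\int_\Omega u_{0\varepsilon},\ |\Omega|\,\}$ by a direct comparison with the constant supersolution. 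Since $\int_\Omega u_{0\varepsilon} = \int_\Omega u_0 + \varepsilon|\Omega| \leqslant \int_\Omega u_0 + |\Omega|$ is bounded uniformly in $\varepsilon\in(0,1)$, we obtain \eqref{-3.4} with $m := \max\{\int_\Omega u_0 + |\Omega|,\ |\Omega|\}$, independent of both $t$ and $\varepsilon$.

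There is essentially no serious obstacle here; the only point requiring a modicum of care is justifying \eqref{plan-eq-1} rigorously, i.e.\ that the spatial integral is differentiable and that the boundary terms vanish. This is immediate from the regularity \eqref{-2.6} of the classical solution on $(0,T_{\max,\varepsilon})$ together with the prescribed no-flux conditions, so differentiation under the integral sign and the divergence theorem apply without issue. The continuity of $u_\varepsilon$ up to $t=0$ then propagates the estimate down to the initial time.
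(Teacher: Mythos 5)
Your proposal is correct and follows essentially the same route as the paper: integrate the first equation, invoke the Cauchy--Schwarz/H\"older bound $\int_\Omega u_\varepsilon^2 \geqslant \frac{1}{|\Omega|}\bigl(\int_\Omega u_\varepsilon\bigr)^2$ to produce a logistic ODE inequality, and then conclude by comparison with the constant supersolution, noting that $\int_\Omega u_{0\varepsilon}$ is bounded uniformly in $\varepsilon$. Both arguments yield the same constant $m = |\Omega| + \int_\Omega u_0$, and the justification of differentiating under the integral sign is standard given the classical regularity of $(u_\varepsilon, v_\varepsilon)$.
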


\begin{proof}
Let $\varepsilon \in(0,1)$. By Hölder's inequality, we have $\left(\int_{\Omega} u_{\varepsilon}\right)^2 \leqslant\left(\int_{\Omega} u_{\varepsilon}^2\right)|\Omega|$. An integration of the first equation in \eqref{sys-regul} yields 
\begin{align}\label{eq:mass01}
\frac{d}{dt}\int_{\Omega} u_{\varepsilon} & =  \int_{\Omega} u_{\varepsilon}- \int_{\Omega} u_{\varepsilon}^2 \nonumber\\
& \leqslant  \int_{\Omega} u_{\varepsilon}-\frac{1}{|\Omega|}\left(\int_{\Omega} u_{\varepsilon}\right)^2 \quad \text { for all } t \in\left(0, T_{\max , \varepsilon}\right).
\end{align}
Then applying the ODE comparison to \eqref{eq:mass01}, we obtain 
$$
\int_{\Omega} u_{\varepsilon}(t) \leqslant m :=|\Omega|+ \int_{\Omega} u_0(x)  \quad \text { for all } t \in\left(0, T_{\max , \varepsilon}\right).  
$$  
We complete the proof.
\end{proof}

\begin{lem}\label{lem-1st-est1}
Assume that \eqref{assIniVal} holds. Then there exists constant $C>0$, independent of $t$ and $\varepsilon$, such that 
\begin{align}\label{-3.4ss}
\int_{\Omega} v_{\varepsilon} \leqslant C\quad \text { for all } t \in\left(0, T_{\max , \varepsilon}\right) \text { and } \varepsilon \in(0,1)
\end{align}
and
\begin{align}\label{-2.10}
\int_{0}^{T_{\max, \varepsilon}}\!\!\! \int_{\Omega} u_{\varepsilon} v_{\varepsilon} \leqslant C \quad \text { for all } \varepsilon \in(0,1).
\end{align}
\end{lem}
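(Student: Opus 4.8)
**Proof proposal for Lemma 3.5 (the bounds on $\int_\Omega v_\varepsilon$ and $\int_0^{T_{\max,\varepsilon}}\int_\Omega u_\varepsilon v_\varepsilon$).**

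The plan is to integrate the second equation of \eqref{sys-regul} over $\Omega$ and exploit the fact that the nutrient is strictly consumed. Integrating $v_{\varepsilon t}=\Delta v_\varepsilon - u_\varepsilon v_\varepsilon$ over $\Omega$ and using the homogeneous Neumann condition $\partial_\nu v_\varepsilon=0$ makes the Laplacian term vanish, leaving
$$
\frac{d}{dt}\int_\Omega v_\varepsilon = -\int_\Omega u_\varepsilon v_\varepsilon \leqslant 0 .
$$
Hence $t\mapsto \int_\Omega v_\varepsilon$ is nonincreasing, so $\int_\Omega v_\varepsilon(t)\leqslant \int_\Omega v_0 \leqslant |\Omega|\,\|v_0\|_{L^\infty(\Omega)}$ for all $t\in(0,T_{\max,\varepsilon})$, which is \eqref{-3.4ss} with a constant depending only on $\Omega$ and $v_0$. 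This step is essentially immediate.

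For the spacetime bound \eqref{-2.10}, I would integrate the same identity in time over $(0,t)$ for $t<T_{\max,\varepsilon}$:
$$
\int_0^{t}\!\!\int_\Omega u_\varepsilon v_\varepsilon = \int_\Omega v_0 - \int_\Omega v_\varepsilon(t) \leqslant \int_\Omega v_0 \leqslant |\Omega|\,\|v_0\|_{L^\infty(\Omega)},
$$
where I used $v_\varepsilon\geqslant 0$ (in fact $v_\varepsilon>0$) from Lemma~\ref{lemma-2.1}. Letting $t\nearrow T_{\max,\varepsilon}$ and invoking monotone convergence gives the claimed bound with a constant independent of both $t$ and $\varepsilon$. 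One should remark that this argument does not require any information about $u_\varepsilon$ beyond nonnegativity, so the bound holds regardless of whether $T_{\max,\varepsilon}$ is finite or infinite.

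There is no real obstacle here: the only points to be careful about are that the boundary term genuinely vanishes (guaranteed by the Neumann condition and the regularity \eqref{-2.6} of $v_\varepsilon$), that $v_\varepsilon$ is nonnegative so that dropping $\int_\Omega v_\varepsilon(t)$ is legitimate, and that the constant is uniform in $\varepsilon$ — which is automatic since $v_{0\varepsilon}=v_0$ is independent of $\varepsilon$. The lemma is purely a consequence of mass-type balance for the nutrient equation, and its usefulness lies in providing the dissipative quantity $\int_0^{T_{\max,\varepsilon}}\int_\Omega u_\varepsilon v_\varepsilon$ that will feed the weighted energy estimates later in the paper.
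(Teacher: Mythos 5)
Your proof is correct and follows the same route as the paper: integrate the $v_\varepsilon$-equation over $\Omega$, use the Neumann condition to kill the Laplacian, and integrate in time to obtain $\int_\Omega v_\varepsilon(t)+\int_0^t\!\int_\Omega u_\varepsilon v_\varepsilon=\int_\Omega v_0$, after which both bounds follow from nonnegativity of $u_\varepsilon,v_\varepsilon$. Nothing to add.
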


\begin{proof}
Let $\varepsilon \in(0,1)$. An integration of the second equation in \eqref{sys-regul} yields 
\begin{align}\label{eq:mass02}
\frac{d}{dt}\int_{\Omega} v_{\varepsilon} & =  -\int_{\Omega} u_{\varepsilon} v_{\varepsilon}
 \quad \text { for all } t \in\left(0, T_{\max, \varepsilon}\right).
\end{align}
By direct integration of \eqref{eq:mass02}, we obtain 
\begin{align*}
\int_{\Omega} v_{\varepsilon} +\int_0^{t} \int_{\Omega} u_{\varepsilon} v_{\varepsilon} =  \int_{\Omega} v_{0 } \quad \text { for all } t \in\left(0, T_{\max , \varepsilon}\right).
\end{align*}
This completes the proof of \eqref{-3.4ss} and \eqref{-2.10} due to \eqref{-2.9}. 
\end{proof}

\begin{lem}\label{lem-1st-est2}
Assume that \eqref{assIniVal} holds. Then there exists constant $C>0$, independent of $t$ and $\varepsilon$, such that 
\begin{align}\label{-3.5}
\int_t^{t+\tau} \int_{\Omega} u^2_{\varepsilon}\leqslant C  \quad \text { for all } t \in\left(0,  \widetilde{T}_{\max,\varepsilon } \right) \text { and } \varepsilon \in(0,1).
\end{align}
where
\begin{align}\label{-3.5ss}
\tau:=\min \left\{1, \frac{1}{2} T_{\max,\varepsilon}\right\} \quad \text { and } \quad \widetilde{T}_{\max }:=\left\{\begin{array}{lll}
T_{\max,\varepsilon }-\tau & \text { if } & T_{\max,\varepsilon }<\infty, \\
\infty & \text { if } & T_{\max,\varepsilon }=\infty.
\end{array}\right.
\end{align}
\end{lem}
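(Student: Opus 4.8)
The plan is to integrate the first equation in \eqref{sys-regul} over $\Omega$ and then over the time interval $(t,t+\tau)$, using the mass bound from Lemma \ref{lem-1st-est} together with the explicit structure of the logistic term. Concretely, integrating $u_{\varepsilon t}=\nabla\cdot(u_\varepsilon v_\varepsilon\nabla u_\varepsilon)-\nabla\cdot(u_\varepsilon^2 v_\varepsilon\nabla v_\varepsilon)+u_\varepsilon-u_\varepsilon^2$ over $\Omega$ and invoking the no-flux boundary condition in \eqref{sys-regul}, the divergence terms vanish and we obtain
\begin{align}\label{eq:mass-again}
\frac{d}{dt}\int_{\Omega}u_{\varepsilon}=\int_{\Omega}u_{\varepsilon}-\int_{\Omega}u_{\varepsilon}^2
\qquad\text{for all }t\in(0,T_{\max,\varepsilon}).
\end{align}
Rearranging, this reads $\int_{\Omega}u_{\varepsilon}^2=\int_{\Omega}u_{\varepsilon}-\frac{d}{dt}\int_{\Omega}u_{\varepsilon}$.

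Next I would integrate \eqref{eq:mass-again} in time from $t$ to $t+\tau$, where $\tau$ is as in \eqref{-3.5ss} so that $\tau\leqslant 1$ and $[t,t+\tau]\subset(0,T_{\max,\varepsilon})$ whenever $t\in(0,\widetilde T_{\max,\varepsilon})$. This gives
\begin{align}\label{eq:time-int}
\int_t^{t+\tau}\!\!\int_{\Omega}u_{\varepsilon}^2
=\int_t^{t+\tau}\!\!\int_{\Omega}u_{\varepsilon}
-\Bigl(\int_{\Omega}u_{\varepsilon}(t+\tau)-\int_{\Omega}u_{\varepsilon}(t)\Bigr).
\end{align}
By Lemma \ref{lem-1st-est}, each of the three remaining terms is controlled: $\int_t^{t+\tau}\int_{\Omega}u_{\varepsilon}\leqslant \tau m\leqslant m$, and $\int_{\Omega}u_{\varepsilon}(t+\tau)-\int_{\Omega}u_{\varepsilon}(t)\geqslant -m$. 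Hence the right-hand side of \eqref{eq:time-int} is bounded above by $2m$, and we conclude \eqref{-3.5} with $C:=2m$, a constant depending only on $|\Omega|$ and $\int_{\Omega}u_0$, hence independent of $t$ and $\varepsilon$.

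There is essentially no serious obstacle here; the only point requiring a little care is making sure the time interval $[t,t+\tau]$ stays inside the maximal existence interval, which is exactly why $\tau$ and $\widetilde T_{\max,\varepsilon}$ are defined as in \eqref{-3.5ss}. (One should also note the minor typo that \eqref{-3.5} is stated for $t\in(0,\widetilde T_{\max,\varepsilon})$ while \eqref{-3.5ss} writes $\widetilde T_{\max}$; these denote the same quantity.) This lemma is the first of the "space–time $L^2$" estimates that will feed into the weighted energy method: it provides the uniform bound on $\int_t^{t+\tau}\int_\Omega u_\varepsilon^2$ needed to apply the ODE lemmas of Section~2 (Lemmas~\ref{lem-0926-1}–\ref{lem-0926-3}) to subsequent differential inequalities, where the quadratic degradation term $-u_\varepsilon^2$ is precisely the mechanism generating the enhanced dissipation advertised in the abstract.
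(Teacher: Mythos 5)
Your proof is correct and is essentially the same argument as the paper's: integrate the exact mass identity $\frac{d}{dt}\int_\Omega u_\varepsilon=\int_\Omega u_\varepsilon-\int_\Omega u_\varepsilon^2$ over $(t,t+\tau)$, rearrange, and bound all three remaining terms by the uniform mass estimate $\int_\Omega u_\varepsilon\leqslant m$ from Lemma~\ref{lem-1st-est}, yielding the constant $C=2m$. (The paper's display even contains the same minor typo you flag, writing $\int_\Omega u_\varepsilon(\tau)$ where $\int_\Omega u_\varepsilon(t)$ is meant.)
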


\begin{proof}
The estimate
\begin{align*}
\int_t^{t+\tau} \int_{\Omega} u_{\varepsilon}^2 & \leqslant  \int_t^{t+\tau} \int_{\Omega} u_{\varepsilon}+ \int_{\Omega} u_{\varepsilon}(\tau)- \int_{\Omega} u_{\varepsilon}(t+\tau) \\
& \leqslant  2 m \quad \text { for all } t \in(0, \widetilde{T}_{\max, \varepsilon }) \text { and } \varepsilon \in(0,1),
\end{align*}
results from \eqref{eq:mass01} and \eqref{-3.5ss} after time-integration.
\end{proof}

\begin{lem}\label{lem-0927-1}
Assume that \eqref{assIniVal} holds. Then there exists constant $C>0$, independent of $t$ and $\varepsilon$, such that 
\begin{align}\label{0927-12}
\int_{\Omega} u_{\varepsilon} \ln u_{\varepsilon} \leqslant C  \quad \text { for all } t \in\left(0, T_{\max, \varepsilon} \right) \text { and } \varepsilon \in(0,1)
\end{align}
and
\begin{align}\label{0926-12}
\int_t^{t+\tau} \int_{\Omega} \frac{|\nabla v_{\varepsilon}|^4}{v_{\varepsilon}^3}  \leqslant C  \quad \text { for all } t \in\left(0,  \widetilde{T}_{\max, \varepsilon } \right) \text { and } \varepsilon \in(0,1)
\end{align}
as well as
\begin{align}\label{0926-13}
\int_t^{t+\tau} \int_{\Omega} \frac{u_{\varepsilon}}{v_{\varepsilon}} |\nabla v_{\varepsilon }|^2 \leqslant C  \quad \text { for all } t \in\left(0,  \widetilde{T}_{\max, \varepsilon } \right) \text { and } \varepsilon \in(0,1),
\end{align}
where $\tau$ and $\widetilde{T}_{\max, \varepsilon }$ are defined by \eqref{-3.5ss}.
\end{lem}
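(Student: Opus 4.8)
The plan is to test the second equation of \eqref{sys-regul} with a suitable function of $v_\varepsilon$ and to test the first equation with $\ln u_\varepsilon$, then combine the two identities so that a single weighted energy functional $\int_\Omega u_\varepsilon\ln u_\varepsilon + \kappa\int_\Omega \frac{|\nabla v_\varepsilon|^2}{v_\varepsilon}$ (or with $v_\varepsilon^{-1}|\nabla v_\varepsilon|^2$ replaced by another power of $v_\varepsilon$) satisfies a Grönwall-type differential inequality with the three target quantities appearing as nonnegative dissipation terms on the left. First I would compute $\frac{d}{dt}\int_\Omega u_\varepsilon\ln u_\varepsilon$: using the first PDE and integrating by parts, the diffusion term $\nabla\cdot(u_\varepsilon v_\varepsilon\nabla u_\varepsilon)$ contributes $-\int_\Omega v_\varepsilon|\nabla u_\varepsilon|^2$ (after canceling $u_\varepsilon$ against $1/u_\varepsilon$), the cross-diffusion term $-\nabla\cdot(u_\varepsilon^2 v_\varepsilon\nabla v_\varepsilon)$ contributes $+\int_\Omega u_\varepsilon v_\varepsilon\nabla u_\varepsilon\cdot\nabla v_\varepsilon$, and the logistic source $u_\varepsilon-u_\varepsilon^2$ contributes $\int_\Omega(u_\varepsilon-u_\varepsilon^2)(1+\ln u_\varepsilon)$, which is bounded above by a constant (using $\int_\Omega u_\varepsilon\le m$ and $-u_\varepsilon^2\ln u_\varepsilon\le C$) — here the quadratic degradation is exactly what tames the otherwise troublesome $\int_\Omega u_\varepsilon\ln u_\varepsilon$ growth. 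The cross term is the dangerous one; I would split it via Young's inequality as $\int_\Omega u_\varepsilon v_\varepsilon\nabla u_\varepsilon\cdot\nabla v_\varepsilon \le \tfrac12\int_\Omega v_\varepsilon|\nabla u_\varepsilon|^2 + \tfrac12\int_\Omega u_\varepsilon^2 v_\varepsilon|\nabla v_\varepsilon|^2$, absorbing half of the good diffusion dissipation and leaving $\tfrac12\int_\Omega u_\varepsilon^2 v_\varepsilon|\nabla v_\varepsilon|^2$ to be controlled by the $v$-equation.

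For the $v$ part, testing $v_{\varepsilon t}=\Delta v_\varepsilon - u_\varepsilon v_\varepsilon$ against something like $-\nabla\cdot\!\big(\tfrac{\nabla v_\varepsilon}{v_\varepsilon}\big)$ (equivalently, using the standard identity for $\frac{d}{dt}\int_\Omega \frac{|\nabla v_\varepsilon|^2}{v_\varepsilon}$ in two dimensions), I expect to produce on the dissipation side a term controlling $\int_\Omega \frac{|\nabla v_\varepsilon|^4}{v_\varepsilon^3}$ together with a term controlling $\int_\Omega v_\varepsilon|D^2\ln v_\varepsilon|^2$, while the reaction term $-u_\varepsilon v_\varepsilon$ produces $\int_\Omega \frac{u_\varepsilon}{v_\varepsilon}|\nabla v_\varepsilon|^2$ up to boundary contributions; on a non-convex domain the boundary term $\int_{\partial\Omega}\frac{1}{v_\varepsilon}\frac{\partial |\nabla v_\varepsilon|^2}{\partial\nu}$ is not sign-favorable, so I would handle it either by the trace/curvature estimate $\frac{\partial|\nabla v_\varepsilon|^2}{\partial\nu}\le C_\Omega|\nabla v_\varepsilon|^2$ on $\partial\Omega$ and interpolation, or by instead working with $\int_\Omega \psi(v_\varepsilon)|\nabla v_\varepsilon|^2$ for a cleverly chosen $\psi$ that removes the obstruction. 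The remaining cross term $\tfrac12\int_\Omega u_\varepsilon^2 v_\varepsilon|\nabla v_\varepsilon|^2$ inherited from the first step must now be dominated: using $v_\varepsilon\le\|v_0\|_{L^\infty}$ it is bounded by $C\int_\Omega u_\varepsilon^2|\nabla v_\varepsilon|^2$, which I would split as $\int_\Omega u_\varepsilon^2|\nabla v_\varepsilon|^2 = \int_\Omega (u_\varepsilon v_\varepsilon^{-1/4})(u_\varepsilon^{?})\cdots$ — more precisely, by Young's inequality distribute it between $\delta\int_\Omega \frac{|\nabla v_\varepsilon|^4}{v_\varepsilon^3}$ and $C_\delta\int_\Omega u_\varepsilon^{4/3}v_\varepsilon$, the latter being interpolated between $\int_\Omega u_\varepsilon^2$ (bounded in the sense of \eqref{-3.5}, after the $\int_t^{t+\tau}$ is taken) and $\int_\Omega u_\varepsilon$; alternatively split so that $\int_\Omega u_\varepsilon^2|\nabla v_\varepsilon|^2 \le \delta\int_\Omega\frac{u_\varepsilon}{v_\varepsilon}|\nabla v_\varepsilon|^2\cdot(\text{something})$.

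Once the differential inequality has the schematic form
\[
\frac{d}{dt}\Big(\int_\Omega u_\varepsilon\ln u_\varepsilon + \kappa\!\int_\Omega \tfrac{|\nabla v_\varepsilon|^2}{v_\varepsilon}\Big) + c_1\!\int_\Omega v_\varepsilon|\nabla u_\varepsilon|^2 + c_2\!\int_\Omega \tfrac{|\nabla v_\varepsilon|^4}{v_\varepsilon^3} + c_3\!\int_\Omega \tfrac{u_\varepsilon}{v_\varepsilon}|\nabla v_\varepsilon|^2 \le g_\varepsilon(t),
\]
with $g_\varepsilon$ controlled in the sense $\sup_{t}\int_t^{t+\tau} g_\varepsilon \le C$ thanks to Lemmas 3.2--3.4, I would apply one of the space–time Grönwall lemmas (Lemma 2.1 or Lemma 2.3) to the functional $y_\varepsilon(t):=\int_\Omega u_\varepsilon\ln u_\varepsilon + \kappa\int_\Omega \frac{|\nabla v_\varepsilon|^2}{v_\varepsilon}$; the required $\int_t^{t+\tau} y_\varepsilon \le C$ bound comes from \eqref{0927-12}-in-progress plus a separate uniform bound on $\int_t^{t+\tau}\int_\Omega\frac{|\nabla v_\varepsilon|^2}{v_\varepsilon}$ obtained by interpolating between $\int_\Omega\frac{|\nabla v_\varepsilon|^4}{v_\varepsilon^3}$ and $\int_\Omega v_\varepsilon\le C$. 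This yields \eqref{0927-12}, and then time-integrating the differential inequality over $(t,t+\tau)$ and discarding $y_\varepsilon(t+\tau)\ge -C$ yields \eqref{0926-12} and \eqref{0926-13} simultaneously. The main obstacle I anticipate is twofold: first, closing the estimate for the cross term $\int_\Omega u_\varepsilon^2 v_\varepsilon|\nabla v_\varepsilon|^2$ without a smallness assumption — this is precisely where the exponent $\alpha=2$ was borderline in the convex-domain literature, and the quadratic degradation $-u_\varepsilon^2$ must supply the extra integrability (through \eqref{-3.5}) that replaces convexity/smallness; second, disposing of the boundary integral arising from $\frac{d}{dt}\int_\Omega\frac{|\nabla v_\varepsilon|^2}{v_\varepsilon}$ on the general (non-convex) domain $\Omega$, which I would absorb using the pointwise boundary inequality for $\partial_\nu|\nabla v|^2$ together with an $\varepsilon$-small share of the interior gradient dissipation.
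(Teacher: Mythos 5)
Your overall schema is right: combine an entropy-type functional involving $\int_\Omega u_\varepsilon\ln u_\varepsilon$ with the Dirichlet-quotient functional $\int_\Omega\frac{|\nabla v_\varepsilon|^2}{v_\varepsilon}$, use the boundary-trace/curvature estimate to handle the non-convex boundary contribution, obtain a differential inequality whose right side is time-averaged bounded thanks to Lemmas~\ref{lem-1st-est}--\ref{lem-1st-est2}, and close with Lemma~\ref{lem-0926-1}. You also correctly identify $\int_\Omega u_\varepsilon^2 v_\varepsilon|\nabla v_\varepsilon|^2$ as the critical obstruction. However, the way you propose to dispose of that term does not close, and this is a genuine gap rather than a cosmetic one.

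Concretely: after Young's inequality splits the cross-diffusive term as
$\int_\Omega u_\varepsilon v_\varepsilon\nabla u_\varepsilon\cdot\nabla v_\varepsilon
\le \tfrac12\int_\Omega v_\varepsilon|\nabla u_\varepsilon|^2 + \tfrac12\int_\Omega u_\varepsilon^2 v_\varepsilon|\nabla v_\varepsilon|^2$,
you still have $\tfrac12\int_\Omega u_\varepsilon^2 v_\varepsilon|\nabla v_\varepsilon|^2$ to control, and your suggested Young splitting into $\delta\int_\Omega\frac{|\nabla v_\varepsilon|^4}{v_\varepsilon^3}+C_\delta\int_\Omega u_\varepsilon^{4/3}v_\varepsilon$ is not exponent-consistent: pairing against $\frac{|\nabla v_\varepsilon|^4}{v_\varepsilon^3}$ forces the conjugate factor to be $u_\varepsilon^2 v_\varepsilon^{5/2}$, hence $C_\delta\int_\Omega u_\varepsilon^4 v_\varepsilon^5$, which is far beyond the integrability available at this stage (only $\int_t^{t+\tau}\int_\Omega u_\varepsilon^2$ is known). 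The ``alternatively'' branch against $\int_\Omega\frac{u_\varepsilon}{v_\varepsilon}|\nabla v_\varepsilon|^2$ would require an $L^\infty$ bound on $u_\varepsilon v_\varepsilon$, which is also unavailable. The paper sidesteps this entirely: the functional is taken to be $y_\varepsilon=\int_\Omega u_\varepsilon\ln u_\varepsilon - \int_\Omega u_\varepsilon v_\varepsilon + \tfrac12\int_\Omega\frac{|\nabla v_\varepsilon|^2}{v_\varepsilon}$, and the extra $-\int_\Omega u_\varepsilon v_\varepsilon$ block is exactly what produces the term $+\int_\Omega u_\varepsilon^2 v_\varepsilon|\nabla v_\varepsilon|^2$ on the left (as well as doubling the cross term), so that Cauchy--Schwarz on $2\int_\Omega u_\varepsilon v_\varepsilon\nabla u_\varepsilon\cdot\nabla v_\varepsilon$ is absorbed identically by the two dissipation terms $\int_\Omega v_\varepsilon|\nabla u_\varepsilon|^2+\int_\Omega u_\varepsilon^2 v_\varepsilon|\nabla v_\varepsilon|^2$, leaving only the benign $2\int_\Omega u_\varepsilon^2 v_\varepsilon\le 2\|v_0\|_{L^\infty}\int_\Omega u_\varepsilon^2$ on the right (see \eqref{0925-11}); simultaneously the $\int_\Omega\nabla u_\varepsilon\cdot\nabla v_\varepsilon$ it generates cancels with the same term coming from $\frac{d}{dt}\int_\Omega\frac{|\nabla v_\varepsilon|^2}{v_\varepsilon}$ (see \eqref{-3.14}). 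Without the $-\int_\Omega u_\varepsilon v_\varepsilon$ piece, the argument as you propose it does not close, and the quadratic degradation alone (through \eqref{-3.5}) does not rescue the $\int_\Omega u_\varepsilon^2 v_\varepsilon|\nabla v_\varepsilon|^2$ term at this stage of the bootstrap.
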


\begin{proof}
In view of \eqref{sys-regul} and the Cauchy-Schwarz inequality, we infer that
\begin{align*}
& \frac{d}{d t}\left\{\int_{\Omega} u_{\varepsilon} \ln u_{\varepsilon}-\int_{\Omega} u_{\varepsilon} v_{\varepsilon}\right\}+\int_{\Omega} v_{\varepsilon}\left|\nabla u_{\varepsilon}\right|^2 +\int_{\Omega} u_{\varepsilon}^2 v_{\varepsilon}\left|\nabla v_{\varepsilon}\right|^2 \nonumber\\
\leqslant & 2 \int_{\Omega} u_{\varepsilon} v_{\varepsilon} \nabla u_{\varepsilon} \cdot \nabla v_{\varepsilon} + \int_{\Omega} \nabla u_{\varepsilon} \cdot \nabla v_{\varepsilon}+2 \int_{\Omega} u_{\varepsilon}^2 v_{\varepsilon} +  \int_{\Omega} u_{\varepsilon} \nonumber\\
&  +\int_{\Omega} u_{\varepsilon} (\ln u_{\varepsilon}-u_{\varepsilon} \ln u_{\varepsilon} )-\int_{\Omega} u_{\varepsilon}^2\nonumber\\
\leqslant & \int_{\Omega} v_{\varepsilon}\left|\nabla u_{\varepsilon}\right|^2 +\int_{\Omega} u_{\varepsilon}^2 v_{\varepsilon}\left|\nabla v_{\varepsilon}\right|^2 + \int_{\Omega} \nabla u_{\varepsilon} \cdot \nabla v_{\varepsilon}+2 \int_{\Omega} u_{\varepsilon}^2 v_{\varepsilon} \nonumber\\
& + \int_{\Omega} u_{\varepsilon} +\int_{\Omega} u_{\varepsilon} (\ln u_{\varepsilon}-u_{\varepsilon} \ln u_{\varepsilon} )-\int_{\Omega} u_{\varepsilon}^2
\end{align*}
for all $t \in(0, T_{\max, \varepsilon})$ and $\varepsilon \in(0,1)$. Using $\ln \xi -\xi \ln \xi \leqslant 0$ for all $\xi > 0$ and neglecting a nonpositive summand  yields that
\begin{align}\label{0925-11}
\frac{d}{d t}\left\{\int_{\Omega} u_{\varepsilon} \ln u_{\varepsilon}-\int_{\Omega} u_{\varepsilon} v_{\varepsilon}\right\}
\leqslant   \int_{\Omega} \nabla u_{\varepsilon} \cdot \nabla v_{\varepsilon}+2 \int_{\Omega} u_{\varepsilon}^2 v_{\varepsilon} +  \int_{\Omega} u_{\varepsilon}
\end{align}
for all $t \in(0, T_{\max, \varepsilon})$ and $\varepsilon \in(0,1)$. According to the second equation in the system \eqref{sys-regul} we compute
\begin{align}\label{-3.11}
\frac{1}{2} \dt \int_{\Omega} \frac{\left|\nabla v_{\varepsilon}\right|^2}{v_{\varepsilon}}
= & \int_{\Omega} \frac{1}{v_{\varepsilon}} \nabla v_{\varepsilon} \cdot \nabla\left\{\Delta v_{\varepsilon}
    -u_{\varepsilon} v_{\varepsilon}\right\}
    -\frac{1}{2} \int_{\Omega} \frac{1}{v_{\varepsilon}^2}\left|\nabla v_{\varepsilon}\right|^2 \cdot\left\{\Delta v_{\varepsilon}-u_{\varepsilon} v_{\varepsilon}\right\} \nonumber\\
= & \int_{\Omega} \frac{1}{v_{\varepsilon}} \nabla v_{\varepsilon} \cdot \nabla\Delta v_{\varepsilon}
    -\int_{\Omega} \frac{u_{\varepsilon}}{v_{\varepsilon}}\left|\nabla v_{\varepsilon}\right|^2
    -\int_{\Omega} \nabla u_{\varepsilon} \cdot \nabla v_{\varepsilon}\nonumber\\
  & -\frac{1}{2} \int_{\Omega} \frac{1}{v_{\varepsilon}^2}\left|\nabla v_{\varepsilon}\right|^2 \cdot\Delta v_{\varepsilon}
    +\frac{1}{2} \int_{\Omega} \frac{u_{\varepsilon}}{v_{\varepsilon}}\left|\nabla v_{\varepsilon}\right|^2\nonumber\\
= & \int_{\Omega} \frac{1}{v_{\varepsilon}} \nabla v_{\varepsilon} \cdot \nabla\Delta v_{\varepsilon}
    -\frac{1}{2} \int_{\Omega} \frac{1}{v_{\varepsilon}^2}\left|\nabla v_{\varepsilon}\right|^2 \cdot\Delta v_{\varepsilon}\nonumber\\
  & -\frac{1}{2} \int_{\Omega} \frac{u_{\varepsilon}}{v_{\varepsilon}}\left|\nabla v_{\varepsilon}\right|^2
    -\int_{\Omega} \nabla u_{\varepsilon} \cdot \nabla v_{\varepsilon}
\end{align}
for all $t \in\left(0, T_{\max, \varepsilon}\right)$ and $\varepsilon \in(0,1)$. 
Based on \cite[Lemma~3.2]{2012-CPDE-Winkler}, we have the integral identity
\begin{align*}
\int_{\Omega} \frac{1}{v_{\varepsilon}} \nabla v_{\varepsilon} \cdot \nabla \Delta v_{\varepsilon}
-\frac{1}{2} \int_{\Omega} \frac{1}{v_{\varepsilon}^2}|\nabla v_{\varepsilon}|^2 \Delta v_{\varepsilon}
= -\int_{\Omega} v_{\varepsilon}\left|D^2 \ln v_{\varepsilon}\right|^2
  +\frac{1}{2} \int_{\partial \Omega} \frac{1}{v_{\varepsilon}} \frac{\partial|\nabla v_{\varepsilon}|^2}{\partial \nu}.
\end{align*}
Therefore, \eqref{-3.11} becomes
\begin{align}\label{-3.11-2}
&\frac{1}{2} \dt \int_{\Omega} \frac{\left|\nabla v_{\varepsilon}\right|^2}{v_{\varepsilon}}
  +\int_{\Omega} v_{\varepsilon}\left|D^2 \ln v_{\varepsilon}\right|^2
    +\frac{1}{2} \int_{\Omega} \frac{u_{\varepsilon}}{v_{\varepsilon}}\left|\nabla v_{\varepsilon}\right|^2\nonumber\\
   =&  \frac{1}{2} \int_{\partial \Omega} \frac{1}{v_{\varepsilon}} \frac{\partial|\nabla v_{\varepsilon}|^2}{\partial \nu}
    -\int_{\Omega} \nabla u_{\varepsilon} \cdot \nabla v_{\varepsilon}
\end{align}
for all $t \in\left(0, T_{\max, \varepsilon}\right)$ and $\varepsilon \in(0,1)$. By \cite[Lemma 3.3]{2012-CPDE-Winkler} and \cite[Lemma 3.4]{2022-NARWA-Winkler}, we see that
\begin{equation}\label{-3.12}
\int_{\Omega} v_{\varepsilon}\left|D^2 \ln v_{\varepsilon}\right|^2 
\geqslant c_1 \int_{\Omega} \frac{\left|D^2 v_{\varepsilon}\right|^2}{v_{\varepsilon}}
  +c_1 \int_{\Omega} \frac{\left|\nabla v_{\varepsilon}\right|^4}{v_{\varepsilon}^3}
\end{equation}
where $c_1=\frac{1}{14+8\sqrt{2}}$.
Thanks to \cite[Lemma 4.2]{2014-AIHPCANL-MizoguchiSouplet} and a boundary trace embedding inequality (cf. \cite[Theorem 1, p.272]{2010--Evans}), we have  
$$
\frac{\partial|\nabla v_{\varepsilon}|^2}{\partial \nu} \leqslant c_2|\nabla v_{\varepsilon}|^2 
$$
and
$$
\int_{\partial \Omega}|v_{\varepsilon}| \leqslant c_3 \int_{\Omega}|\nabla v_{\varepsilon}|+c_3 \int_{\Omega}|v_{\varepsilon}|,
$$
where $c_2=c_2(\Omega)>0$ denotes an upper bound for the curvatures of $\partial \Omega$ and $c_3$ is a constant depending only on $\Omega$. Therefore, by Young's inequality, we obtain
\begin{align}\label{n-3.13}
\frac{1}{2}\int_{\partial \Omega} \frac{1}{v_{\varepsilon}} \cdot \frac{\partial\left|\nabla v_{\varepsilon}\right|^2}{\partial \nu} 
& \leqslant \frac{c_2}{2} \int_{\partial \Omega} \frac{\left|\nabla v_{\varepsilon}\right|^2}{v_{\varepsilon}} \nonumber\\
& \leqslant \frac{c_2 c_3}{2} \int_{\Omega}\left|\nabla\Big(\frac{\left|\nabla v_{\varepsilon}\right|^2}{v_{\varepsilon}}\Big)\right|
    +\frac{c_2 c_3}{2} \int_{\Omega} \frac{\left|\nabla v_{\varepsilon}\right|^2}{v_{\varepsilon}} \nonumber\\
& \leqslant c_2 c_3 \int_{\Omega} \frac{\left|D^2 v_{\varepsilon} \cdot \nabla v_{\varepsilon}\right|}{v_{\varepsilon}}
    +\frac{c_2 c_3}{2} \int_{\Omega} \frac{\left|\nabla v_{\varepsilon}\right|^3}{v_{\varepsilon}^2}
    +\frac{c_2 c_3}{2} \int_{\Omega} \frac{\left|\nabla v_{\varepsilon}\right|^2}{v_{\varepsilon}} \nonumber\\
&\leqslant c_1\int_{\Omega} \frac{\left|D^2 v_{\varepsilon}\right|^2}{v_{\varepsilon}}
  +\frac{c_1}{4} \int_{\Omega} \frac{\left|\nabla v_{\varepsilon}\right|^4}{v_{\varepsilon}^3}
  +c_4 \int_{\Omega} \frac{\left|\nabla v_{\varepsilon}\right|^2}{v_{\varepsilon}}\nonumber\\
&\leqslant c_1\int_{\Omega} \frac{\left|D^2 v_{\varepsilon}\right|^2}{v_{\varepsilon}}
  +\frac{c_1}{2} \int_{\Omega} \frac{\left|\nabla v_{\varepsilon}\right|^4}{v_{\varepsilon}^3}
  +c_5 \int_{\Omega} v_{\varepsilon},
\end{align}
where $c_4= \frac{c^2_2 c^2_3}{2 c_1}+\frac{c_2 c_3}{2}$ and $c_5=\frac{c^2_4}{c_1}$. Summing up \eqref{-3.11-2}-\eqref{n-3.13}, we conclude that
\begin{align}\label{-3.14}
\frac{1}{2}\frac{d}{dt} \int_{\Omega} \frac{|\nabla v_{\varepsilon}|^2}{v_{\varepsilon}}
+ \frac{c_1}{2}\int_{\Omega} \frac{|\nabla v_{\varepsilon}|^4}{v_{\varepsilon}^3}
+ \frac{1}{2} \int_{\Omega} \frac{u_{\varepsilon}}{v_{\varepsilon}} |\nabla v_{\varepsilon }|^2 
\leqslant  -\int_{\Omega} \nabla u_{\varepsilon} \cdot \nabla v_{\varepsilon} +  c_5 \int_{\Omega} v_{\varepsilon}
\end{align}
for all $t \in(0, T_{\max, \varepsilon})$ and $\varepsilon \in(0,1)$. Combining \eqref{0925-11} with  \eqref{-3.14}, we get 
\begin{align*}
\frac{d}{d t}\left\{\int_{\Omega} u_{\varepsilon} \ln u_{\varepsilon}-\int_{\Omega} u_{\varepsilon} v_{\varepsilon}+\frac{1}{2} \int_{\Omega} \frac{|\nabla v_{\varepsilon}|^2}{v_{\varepsilon}} \right\} & + \frac{c_1}{2}\int_{\Omega} \frac{|\nabla v_{\varepsilon}|^4}{v_{\varepsilon}^3} + \frac{1}{2} \int_{\Omega} \frac{u_{\varepsilon}}{v_{\varepsilon}} |\nabla v_{\varepsilon }|^2\nonumber\\
& \leqslant  2 \int_{\Omega} u_{\varepsilon}^2 v_{\varepsilon} +  \int_{\Omega} u_{\varepsilon} +  c_5 \int_{\Omega} v_{\varepsilon}
\end{align*}
for all $t \in(0, T_{\max, \varepsilon})$ and $\varepsilon \in(0,1)$. Letting $y_{\varepsilon}(t):= \int_{\Omega} u_{\varepsilon} \ln u_{\varepsilon}-\int_{\Omega} u_{\varepsilon} v_{\varepsilon}+\frac{1}{2} \int_{\Omega} \frac{|\nabla v_{\varepsilon}|^2}{v_{\varepsilon}}$, adding $y_{\varepsilon}(t)$ to both sides of the above equation and using the estimates $\int_{\Omega} u_{\varepsilon} \ln u_{\varepsilon} \leqslant \int_{\Omega} u_{\varepsilon}^2$, $\frac{1}{2} \int_{\Omega} \frac{|\nabla v_{\varepsilon}|^2}{v_{\varepsilon}} \leqslant \frac{c_1}{4}\int_{\Omega} \frac{|\nabla v_{\varepsilon}|^4}{v_{\varepsilon}^3}+ \frac{1}{4 c_1}\int_{\Omega} v_{\varepsilon }$ and \eqref{-2.9}, we obtain 
\begin{align}\label{0926-11}
y_{\varepsilon}^{\prime}(t)+y_{\varepsilon}(t)+ \frac{c_1}{4}\int_{\Omega} \frac{|\nabla v_{\varepsilon}|^4}{v_{\varepsilon}^3} & + \frac{1}{2} \int_{\Omega} \frac{u_{\varepsilon}}{v_{\varepsilon}} |\nabla v_{\varepsilon }|^2\nonumber\\
& \leqslant 2 \int_{\Omega} u_{\varepsilon}^2 v_{\varepsilon} +  \int_{\Omega} u_{\varepsilon} +  c_6 \int_{\Omega} v_{\varepsilon} + \int_{\Omega} u_{\varepsilon}^2\nonumber\\
& \leqslant (2\|v_0\|_{L^{\infty}(\Omega)} +1 )\int_{\Omega} u_{\varepsilon}^2 +  \int_{\Omega} u_{\varepsilon} +   c_6 \int_{\Omega} v_{\varepsilon}\nonumber\\
&\quad \text { for all } t \in\left(0, T_{\max } \right) \text { and } \varepsilon \in(0,1),
\end{align}
where $c_6=c_5+\frac{1}{4 c_1}$. By Lemmas \ref{lem-1st-est}, \ref{lem-1st-est1} and \ref{lem-1st-est2}, there exist a constant $c_7>0$, independent of $t$ and $\varepsilon$, such that 
\begin{align*}
(2\|v_0\|_{L^{\infty}(\Omega)} +1 ) \int_t^{t+\tau} \int_{\Omega} u^2_{\varepsilon}+ \int_t^{t+\tau} \int_{\Omega} u_{\varepsilon} + c_6 \int_t^{t+\tau} \int_{\Omega} v_{\varepsilon} \leqslant c_7.
\end{align*}
for all $t \in(0, \widetilde{T}_{\max, \varepsilon})$ and $\varepsilon \in(0,1)$. Consequently, applying Lemma \ref{lem-0926-1} to \eqref{0926-11} implies the existence of another constant $c_8>0$ also independent of $t$ and $\varepsilon$, satisfying
\begin{align*}
\int_{\Omega} u_{\varepsilon} \ln u_{\varepsilon}-\int_{\Omega} u_{\varepsilon} v_{\varepsilon}& +\frac{1}{2} \int_{\Omega} \frac{|\nabla v_{\varepsilon}|^2}{v_{\varepsilon}}\nonumber\\
& \leqslant c_8  :=\max \left\{(u_{0}+1)\ln (u_{0}+1)+\frac{1}{2} \int_{\Omega} \frac{|\nabla v_{0}|^2}{v_{0}}+c_7,~\frac{c_7}{\tau}+2 c_7\right\}.
\end{align*}
This follows from \eqref{-2.9} and \eqref{-3.4} that
\begin{align}\label{0108-11}
-\frac{|\Omega|}{e}-m\|v_0\|_{L^\infty(\Omega)}
\leqslant
\int_{\Omega} u_{\varepsilon} \ln u_{\varepsilon}
-\int_{\Omega} u_{\varepsilon} v_{\varepsilon}
\leqslant
y_{\varepsilon}(t)
\leqslant
c_8
\end{align}
for all $t \in (0, T_{\max,\varepsilon})$ and $\varepsilon \in (0,1)$.
Then, integrating both sides of \eqref{0926-11} over the interval $(t,t+\tau)$ and using \eqref{0108-11} and $0<\tau<1$, we obtain
\begin{align*}
\frac{c_1}{4} \int_t^{t+\tau} \int_{\Omega} \frac{|\nabla v_{\varepsilon}|^4}{v_{\varepsilon}^3}  + \frac{1}{2} \int_t^{t+\tau} \int_{\Omega} \frac{u_{\varepsilon}}{v_{\varepsilon}} |\nabla v_{\varepsilon }|^2  
&  \leqslant c_7+c_8+2\frac{|\Omega|}{e}+2 m  \|v_0\|_{L^{\infty}(\Omega)}.
\end{align*}
for all $t \in(0, \widetilde{T}_{\max })$ and $\varepsilon \in(0,1)$. Thus, we complete the proof of \eqref{0927-12}, \eqref{0926-12} and \eqref{0926-13}.
\end{proof}



\section{Uniform $L^p$ Estimates for $u_{\varepsilon}$ with $p>1$}\label{sect-4}

We now present our two key tools, which furnish the foundation for a subsequent $L^p$-regularity argument for $u_{\varepsilon}$.

\begin{lem}\label{lemma-3.4}
Let $\Omega \subset \mathbb{R}^2$ be a bounded domain with smooth boundary and $p \geqslant 1$. For any $\varphi, \psi \in C^1(\overline{\Omega})$ satisfying $\varphi,\psi>0$ in $\overline{\Omega}$, there holds
\begin{align}\label{eq-6.1}
\int_{\Omega} \varphi^{p +1} \psi 
  \leqslant  c \left\{\int_{\Omega} \psi |\nabla \varphi|^2 
  +\int_{\Omega} \frac{\varphi}{\psi}|\nabla \psi|^2\
    +\int_{\Omega} \varphi \psi  \right\}\cdot \int_{\Omega} \varphi^p + c \int_{\Omega} \psi |\nabla \varphi|^2 
\end{align}
for some constant $c=c(p,\Omega)>0$.
\end{lem}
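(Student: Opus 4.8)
The plan is to estimate $\int_\Omega \varphi^{p+1}\psi$ by a Gagliardo–Nirenberg–type argument applied to the product $w := \varphi^{(p+1)/2}\psi^{1/2}$, exploiting that $\int_\Omega w^2 = \int_\Omega \varphi^{p+1}\psi$ is exactly the quantity to be controlled, while the "low" norm $\int_\Omega \varphi^{p}$ is available as a factor on the right-hand side. First I would write, in $\mathbb{R}^2$, the two-dimensional Gagliardo–Nirenberg inequality in the form
\begin{align*}
\|w\|_{L^2(\Omega)}^2 \leqslant c\,\|\nabla w\|_{L^1(\Omega)}\,\|w\|_{L^1(\Omega)}\cdot\frac{1}{\|w\|_{L^1(\Omega)}} + \text{l.o.t.},
\end{align*}
but it is cleaner to use the scaling-correct version: for functions on a bounded planar domain,
\begin{align*}
\int_\Omega w^2 \leqslant c\Bigl(\int_\Omega |\nabla w|\Bigr)^2 + c\Bigl(\int_\Omega |w|\Bigr)^2,
\end{align*}
or the sharper multiplicative form $\int_\Omega w^2 \leqslant c\bigl(\int_\Omega|\nabla w| + \int_\Omega w\bigr)\cdot\int_\Omega w$; then the key point is to bound $\int_\Omega w$ and $\int_\Omega|\nabla w|$ in terms of the structural quantities appearing on the right of \eqref{eq-6.1}.

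For the term $\int_\Omega w = \int_\Omega \varphi^{(p+1)/2}\psi^{1/2}$ I would not attempt to reproduce the factor $\int_\Omega\varphi^p$ directly from $w$; instead, the natural route is to apply the Gagliardo–Nirenberg estimate to $w := \varphi^{1/2}\psi^{1/2}$ (so that $w^2 = \varphi\psi$ and $\int_\Omega w^2$ reconstructs, after raising to a power, part of the target) and interpolate against $\int_\Omega\varphi^p$ via Hölder. A more robust variant, which I expect is what the authors do: take $w := \varphi^{1/2}\psi^{1/2}$, compute
\begin{align*}
\nabla w = \tfrac12\varphi^{-1/2}\psi^{1/2}\nabla\varphi + \tfrac12\varphi^{1/2}\psi^{-1/2}\nabla\psi,
\end{align*}
so that by Cauchy–Schwarz
\begin{align*}
\int_\Omega |\nabla w| \leqslant \tfrac12\Bigl(\int_\Omega \psi|\nabla\varphi|^2\Bigr)^{1/2}\Bigl(\int_\Omega\varphi^{-1}\,\mathbf{1}\Bigr)^{1/2} + \cdots,
\end{align*}
which is awkward because of the $\varphi^{-1}$. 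The cleaner choice is $w := \varphi\psi^{1/2}$ if $p=1$, and more generally to split $\varphi^{p+1}\psi = \varphi^{p-1}\cdot(\varphi\psi^{1/2})^2$, apply GN to $\tilde w := \varphi\psi^{1/2}$ combined with Hölder's inequality with exponents matching the factor $\int_\Omega\varphi^p$. The hard part — and the step I would spend the most care on — is bookkeeping the exponents so that the homogeneous GN interpolation produces \emph{exactly} the combination $\bigl(\int\psi|\nabla\varphi|^2 + \int\frac{\varphi}{\psi}|\nabla\psi|^2 + \int\varphi\psi\bigr)\cdot\int\varphi^p$ plus the leftover $\int\psi|\nabla\varphi|^2$, rather than some term with a worse power of $\int_\Omega\varphi^p$ or an uncontrollable negative power of $\psi$.

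Concretely, the main steps in order: (i) set $w = \varphi\psi^{1/2}$ and record $\nabla w = \psi^{1/2}\nabla\varphi + \tfrac12\varphi\psi^{-1/2}\nabla\psi$, hence $|\nabla w|^2 \leqslant 2\psi|\nabla\varphi|^2 + \tfrac12\varphi^2\psi^{-1}|\nabla\psi|^2$ pointwise; (ii) apply the planar Gagliardo–Nirenberg inequality $\|w\|_{L^{2(p+1)/p}}^{2(p+1)/p} \le c\|\nabla w\|_{L^2}^{2}\,\|w\|_{L^{2/p}}^{\,\cdot} + c\|w\|_{L^{2/p}}^{\,\cdot}$ with the exponents chosen so that $\|w\|_{L^{2(p+1)/p}}^{2(p+1)/p}$ equals $\int_\Omega\varphi^{2(p+1)/p}\psi^{(p+1)/p}$ — then raise to an appropriate power and use Hölder to extract $\int_\Omega\varphi^{p+1}\psi$ against $\int_\Omega\varphi^p$; (iii) substitute the pointwise bound from (i) for $\|\nabla w\|_{L^2}^2$, giving the $\int\psi|\nabla\varphi|^2$ and $\int\varphi^2\psi^{-1}|\nabla\psi|^2$ terms — the latter is not literally $\int\frac{\varphi}{\psi}|\nabla\psi|^2$, so (iv) interpolate $\int_\Omega\varphi^2\psi^{-1}|\nabla\psi|^2$ between $\int_\Omega\frac{\varphi}{\psi}|\nabla\psi|^2$ and $\int_\Omega\varphi^{p+1}\psi$ (writing $\varphi^2\psi^{-1} = (\varphi\psi^{-1})^{1-\theta}(\varphi^{p+1}\psi)^{\theta}\cdot(\text{power of }|\nabla\psi|)$ correctly, or more simply using Young's inequality after Hölder) so that the $\int\varphi^{p+1}\psi$ piece is absorbed into the left-hand side with a small coefficient; (v) collect the lower-order contributions into the $\int_\Omega\varphi\psi$ term and the trailing $\int_\Omega\psi|\nabla\varphi|^2$. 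The genuinely delicate point is (iv): one must verify that the absorption in (iv) is legitimate, i.e. that the exponent $\theta$ on $\int\varphi^{p+1}\psi$ is strictly less than $1$ (which holds precisely because we are in dimension two and $p\ge1$), so that Young's inequality applies and no circularity arises. Everything else is Gagliardo–Nirenberg plus Hölder/Young bookkeeping and should be routine once the exponents are pinned down.
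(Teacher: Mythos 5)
There is a genuine gap. Your opening instinct is exactly right: take $\rho=\varphi^{(p+1)/2}\psi^{1/2}$, so that $\rho^2=\varphi^{p+1}\psi$ is precisely the target quantity, and invoke the $W^{1,1}(\Omega)\hookrightarrow L^2(\Omega)$ embedding in the form $\int_\Omega\rho^2\leqslant c\bigl(\int_\Omega|\nabla\rho|\bigr)^2+c\bigl(\int_\Omega\rho\bigr)^2$. But you then write that you ``would not attempt to reproduce the factor $\int_\Omega\varphi^p$ directly from $w$'' and switch to $w=\varphi\psi^{1/2}$ or $w=\varphi^{1/2}\psi^{1/2}$ with an $L^2$-gradient Gagliardo--Nirenberg estimate. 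That is where the argument breaks. The key step you are missing is that the factor $\int_\Omega\varphi^p$ \emph{does} fall out cleanly from the first choice via Cauchy--Schwarz: after the triangle inequality, $\bigl(\int_\Omega|\nabla\rho|\bigr)^2$ is controlled (up to constants) by $\bigl(\int_\Omega\varphi^{(p-1)/2}\psi^{1/2}|\nabla\varphi|\bigr)^2$ and $\bigl(\int_\Omega\varphi^{(p+1)/2}\psi^{-1/2}|\nabla\psi|\bigr)^2$, and one splits each integrand as $\varphi^{(p-1)/2}\cdot\psi^{1/2}|\nabla\varphi|$ and $\varphi^{p/2}\cdot\varphi^{1/2}\psi^{-1/2}|\nabla\psi|$ respectively; Cauchy--Schwarz then gives $\int_\Omega\varphi^{p-1}\cdot\int_\Omega\psi|\nabla\varphi|^2$ (and $\int_\Omega\varphi^{p-1}\leqslant\int_\Omega\varphi^p+|\Omega|$ yields both the multiplicative and the trailing $\int_\Omega\psi|\nabla\varphi|^2$ term) and $\int_\Omega\varphi^p\cdot\int_\Omega\tfrac{\varphi}{\psi}|\nabla\psi|^2$. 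Similarly $\bigl(\int_\Omega\rho\bigr)^2=\bigl(\int_\Omega\varphi^{p/2}(\varphi\psi)^{1/2}\bigr)^2\leqslant\int_\Omega\varphi^p\cdot\int_\Omega\varphi\psi$. That is the whole proof.

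The alternative route you pursue instead does not close. With $w=\varphi\psi^{1/2}$, the quantity $\|w\|_{L^{2(p+1)/p}}^{2(p+1)/p}$ equals $\int_\Omega\varphi^{2(p+1)/p}\psi^{(p+1)/p}$, whose power of $\psi$ is $(p+1)/p\ne1$, so it cannot be matched to $\int_\Omega\varphi^{p+1}\psi$ by any H\"older manipulation in $\varphi$ alone. Moreover, the $L^2$-gradient term produces $\int_\Omega\varphi^2\psi^{-1}|\nabla\psi|^2$, and your proposed interpolation in step (iv) between $\int_\Omega\tfrac{\varphi}{\psi}|\nabla\psi|^2$ and $\int_\Omega\varphi^{p+1}\psi$ cannot simultaneously match the exponents of $\varphi$, $\psi$, and $|\nabla\psi|^2$ (the $\psi$-exponent forces the interpolation parameter to $1$ while the $\varphi$-exponent forces it to $(p-1)/p$), so the absorption you hope for is not available. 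Return to your first choice of $\rho$ and the $L^1$-gradient Sobolev embedding, and apply Cauchy--Schwarz termwise; no absorption is needed.
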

\begin{proof}
The Sobolev embedding inequality in $\Omega \subset \mathbb{R}^2$ yields $c_1(\Omega)>0$ fulfilling
\begin{align}\label{eq-6.2}
\int_{\Omega} \rho^2 \leqslant c_1(\Omega) \|\nabla \rho\|_{L^1(\Omega)}^2 + c_1(\Omega) \|\rho\|_{L^{1}(\Omega)}^2, \quad 
  \rho \in W^{1,1}(\Omega).
\end{align}
For any $\varphi, \psi \in C^1(\overline{\Omega})$ satisfying $\varphi,\psi>0$ in $\overline{\Omega}$, we apply \eqref{eq-6.1} with $\rho=\varphi^{\frac{p +1}{2}} \psi^{\frac{1}{2}}$ to infer that
\begin{align}\label{eq-6.3}
\int_{\Omega} \varphi^{p +1} \psi \leqslant 
&\ c_1(\Omega) \left\{\int_{\Omega}\left|\frac{p+1}{2}\varphi^{\frac{p -1}{2}}\psi^{\frac{1}{2}}\nabla\varphi
    +\frac{1}{2}\varphi^{\frac{p +1}{2}} \psi^{-\frac{1}{2}} \nabla \psi\right|\right\}^2
 +c_1(\Omega) \cdot\left\{\int_{\Omega}\varphi^{\frac{p+1}{2}} \psi^{\frac{1}{2}}\right\}^{2}\nonumber\\
\leqslant &\ \frac{(p+1)^2 c_1(\Omega)}{2} \left\{\int_{\Omega} \varphi^{\frac{p -1}{2}} \psi^{\frac{1}{2}}|\nabla \varphi|\right\}^2
  +\frac{c_1(\Omega)}{2}\left\{\int_{\Omega} \varphi^{\frac{p +1}{2}} \psi^{-\frac{1}{2}}|\nabla \psi|\right\}^{2} \nonumber\\
& +c_1(\Omega)\left\{\int_{\Omega}\varphi^{\frac{p +1}{2}} \psi^{\frac{1}{2}}\right\}^{2}.
\end{align}
By Young's inequality, we have
\begin{align*}
\left\{\int_{\Omega} \varphi^{\frac{p -1}{2}} \psi^{\frac{1}{2}}|\nabla \varphi|\right\}^2 \leqslant \int_{\Omega} \varphi^{p-1}  \cdot \int_{\Omega} \psi |\nabla \varphi|^2 \leqslant \int_{\Omega} \varphi^{p}  \cdot \int_{\Omega} \psi |\nabla \varphi|^2 +|\Omega| \int_{\Omega} \psi |\nabla \varphi|^2
\end{align*}
and
\begin{align*}
\left\{\int_{\Omega} \varphi^{\frac{p +1}{2}} \psi^{-\frac{1}{2}}|\nabla \psi|\right\}^2 =
\left\{\int_{\Omega} \varphi^{\frac{p}{2}}\cdot \frac{\varphi^{\frac{1}{2}}}{\psi^{\frac{1}{2}}} |\nabla \psi|\right\}^2
\leqslant \int_{\Omega} \varphi^p  \cdot \int_{\Omega} \frac{\varphi}{\psi}|\nabla \psi|^2.
\end{align*}
Based on H\"{o}lder's inequality, we see that
\begin{align*}
\left\{\int_{\Omega}\varphi^{\frac{p +1}{2}} \psi^{\frac{1}{2}}\right\}^{2} & 
=\left\{\int_{\Omega} \varphi^{\frac{p }{2}}  \cdot (\varphi \psi)^{\frac{1}{2}}\right\}^{2} \leqslant \int_{\Omega} \varphi^p  \cdot \int_{\Omega} \varphi \psi.
\end{align*}
Thus, \eqref{eq-6.1} results from \eqref{eq-6.3} letting $c=c(p,\Omega)=\max \left\{\frac{(p +1)^2 c_1(\Omega)}{2}, \frac{(p +1)^2 |\Omega|c_1(\Omega)}{2},c_1(\Omega)\right\}$.
\end{proof}

\begin{lem}[{\cite[Lemma A.2]{2025-MMaMiAS-ZhangLi}}]\label{lemma-3.5}
Let $\Omega \subset \mathbb{R}^2$ be a bounded domain with smooth boundary and $p \geqslant 1$. For each $\eta>0$ and any $\varphi, \psi \in C^1(\overline{\Omega})$ satisfying $\varphi,\psi>0$ in $\overline{\Omega}$, there holds
\begin{align}\label{eq-6.4}
\int_\Omega \varphi^{p+1} \psi |\nabla \psi|^2
\leqslant & \eta \int_\Omega \varphi^{p-1} \psi|\nabla \varphi|^2
  + c \left\{\left\|\psi \right\|_{L^{\infty}(\Omega)}+\frac{\left\|\psi \right\|^3_{L^{\infty}(\Omega)}}{\eta}\right\} 
    \cdot \int_{\Omega} \varphi^{p+1} \psi  \cdot \int_\Omega \frac{|\nabla \psi|^4}{\psi^3} \nonumber\\
& + c \left\|\psi \right\|^2_{L^{\infty}(\Omega)} \cdot\left\{\int_\Omega \varphi\right\}^{2 p+1} 
    \cdot \int_\Omega \frac{|\nabla \psi|^4}{\psi^3} 
    + c \left\|\psi \right\|^2_{L^{\infty}(\Omega)} \cdot \int_{\Omega}\varphi \psi
\end{align}
for some constant $c=c(p)>0$.
\end{lem}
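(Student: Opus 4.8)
Since \eqref{eq-6.4} is a purely functional inequality, the plan is to prove it by a weighted Gagliardo--Nirenberg/Sobolev interpolation, carried out in three steps. First I would use the Cauchy--Schwarz inequality to peel off the dissipative factor $\int_\Omega|\nabla\psi|^4/\psi^3$: writing $\varphi^{p+1}\psi|\nabla\psi|^2=(\varphi^{p+1}\psi)\cdot|\nabla\psi|^2$ and combining Cauchy--Schwarz with $\psi^2\le\|\psi\|_{L^\infty(\Omega)}\psi$ and $\int_\Omega|\nabla\psi|^4=\int_\Omega\psi^3\cdot\frac{|\nabla\psi|^4}{\psi^3}\le\|\psi\|_{L^\infty(\Omega)}^3\int_\Omega\frac{|\nabla\psi|^4}{\psi^3}$ gives
\[
\int_\Omega \varphi^{p+1}\psi|\nabla\psi|^2\ \le\ \|\psi\|_{L^\infty(\Omega)}^{2}\,\Big(\int_\Omega \varphi^{2(p+1)}\psi\Big)^{1/2}\Big(\int_\Omega\frac{|\nabla\psi|^4}{\psi^3}\Big)^{1/2},
\]
so everything reduces to estimating the $\psi$-weighted $L^{2(p+1)}$-integral of $\varphi$ by the four monomials allowed on the right of \eqref{eq-6.4}.

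For the second (and main) step I would write $\int_\Omega\varphi^{2(p+1)}\psi=\|\varphi^{p+1}\psi^{1/2}\|_{L^2(\Omega)}^2$ and invoke the two-dimensional Sobolev inequality \eqref{eq-6.2}, exactly as in the proof of Lemma~\ref{lemma-3.4}; expanding $\nabla(\varphi^{p+1}\psi^{1/2})$ yields three squared $L^1$-integrals, which are treated by further Cauchy--Schwarz/H\"older steps so as to bring in only $\int_\Omega\varphi^{p-1}\psi|\nabla\varphi|^2$, $\int_\Omega\varphi^{p+1}\psi$ and $\int_\Omega|\nabla\psi|^4/\psi^3$ (possibly together with a reproduced copy of the left-hand side $\int_\Omega\varphi^{p+1}\psi|\nabla\psi|^2$), plus lower-order remainders in which the power of $\varphi$ is strictly below $2(p+1)$. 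These remainders are reduced by a Gagliardo--Nirenberg interpolation --- equivalently, by iterating \eqref{eq-6.2} applied to $\varphi^{(p+1)/2^{j}}\psi^{\ast_j}$, which halves the power of $\varphi$ at each step until it reaches $1$ --- which interpolates $\int_\Omega\varphi^{2(p+1)}$ between a $\psi$-weighted Dirichlet integral of $\varphi^{(p+1)/2}$ and the mass $\|\varphi\|_{L^1(\Omega)}$; this is precisely where the exponent $2p+1$ is forced (the interpolation would a priori yield $(\int_\Omega\varphi)^{2(p+1)}$, and a single Young's splitting trading one factor $\int_\Omega\varphi$ against $\int_\Omega|\nabla\psi|^4/\psi^3$, respectively against $\int_\Omega\varphi\psi$, brings it to $2p+1$). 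All the surplus powers of $\psi$ are absorbed via $\psi^{a}\le\|\psi\|_{L^\infty(\Omega)}^{a-1}\psi$ for $a\ge 1$, which is what produces the prefactors $\|\psi\|_{L^\infty(\Omega)}^2$ and $\|\psi\|_{L^\infty(\Omega)}^3$.

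Finally I would insert the Step~2 bound into the Step~1 inequality, take square roots, and close with Young's inequality applied term by term: the contribution carrying the $\psi$-weighted Dirichlet energy of $\varphi^{(p+1)/2}$, multiplied by $\big(\int_\Omega\frac{|\nabla\psi|^4}{\psi^3}\cdot\int_\Omega\varphi^{p+1}\psi\big)^{1/2}$, splits as $\eta\int_\Omega\varphi^{p-1}\psi|\nabla\varphi|^2+\frac{c}{\eta}\|\psi\|_{L^\infty(\Omega)}^3\int_\Omega\varphi^{p+1}\psi\cdot\int_\Omega\frac{|\nabla\psi|^4}{\psi^3}$; any term reproducing $\int_\Omega\varphi^{p+1}\psi|\nabla\psi|^2$ is arranged to carry a coefficient $\le\frac12$ and is absorbed on the left; and the purely low-order pieces become $\|\psi\|_{L^\infty(\Omega)}^2(\int_\Omega\varphi)^{2p+1}\int_\Omega\frac{|\nabla\psi|^4}{\psi^3}$ and $\|\psi\|_{L^\infty(\Omega)}^2\int_\Omega\varphi\psi$. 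As $\Omega$ is fixed, all constants depend only on $p$, so \eqref{eq-6.4} follows with some $c=c(p)>0$.

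The hard part is the bookkeeping in the second step. The right-hand side of \eqref{eq-6.4} is extremely rigid --- exactly four monomials, each with a prescribed power of $\|\psi\|_{L^\infty(\Omega)}$, the gradient term carrying the degenerate weight $\varphi^{p-1}\psi$ and a coefficient that must shrink to the prescribed $\eta$ --- while one has only an $L^\infty$ bound on $\psi$ (and no lower bound) and no $L^\infty$ control on $\varphi$ at all. Ensuring that the iteration reduces the power of $\varphi$ down to the mass without ever producing an inadmissible term (such as $\int_\Omega\varphi\cdot\int_\Omega\varphi^{p-1}\psi|\nabla\varphi|^2$ or an isolated power $\int_\Omega\psi^{a}$), that the mass comes out to precisely the power $2p+1$, and that every fractional power generated along the way can be routed to one of the target monomials by Young's inequality, is what makes this estimate genuinely technical --- which is also why it is proved in an appendix of \cite{2025-MMaMiAS-ZhangLi}.
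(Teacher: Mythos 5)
The paper does not actually prove this lemma --- it cites it as Lemma~A.2 of the companion paper \cite{2025-MMaMiAS-ZhangLi} and treats it as a black box --- so there is no in--paper proof against which to compare your argument. Assessed on its own terms, your sketch identifies the right ingredients (Cauchy--Schwarz to peel off the factor $\int_\Omega|\nabla\psi|^4/\psi^3$, the 2D Sobolev inequality \eqref{eq-6.2}, and a final Young step), and Step~1 is correct as written: $\int_\Omega\varphi^{p+1}\psi|\nabla\psi|^2\le\|\psi\|_{L^\infty(\Omega)}^2\,J^{1/2}K^{1/2}$ with $J=\int_\Omega\varphi^{2(p+1)}\psi$ and $K=\int_\Omega|\nabla\psi|^4/\psi^3$. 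But you leave Step~2 --- the part you yourself single out as the hard part --- at the level of a plan, and when one tries to execute it as described, the $\|\psi\|_{L^\infty(\Omega)}$-powers do not close.

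Concretely: in Step~2 you Sobolev-bound $J=\|\varphi^{p+1}\psi^{1/2}\|_{L^2(\Omega)}^2$, and the gradient term produces $\bigl(\int_\Omega\varphi^p\psi^{1/2}|\nabla\varphi|\bigr)^2$. The only Cauchy--Schwarz split of this integrand that recovers the $\psi$-weighted Dirichlet energy $D:=\int_\Omega\varphi^{p-1}\psi|\nabla\varphi|^2$ is $\varphi^p\psi^{1/2}|\nabla\varphi|=\bigl(\varphi^{(p-1)/2}\psi^{1/2}|\nabla\varphi|\bigr)\cdot\varphi^{(p+1)/2}$, whose dual factor is $\int_\Omega\varphi^{p+1}$ --- \emph{without} the $\psi$-weight, hence not $M:=\int_\Omega\varphi^{p+1}\psi$ and not one of the four admissible monomials. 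Since $\psi$ has no lower bound, $\int_\Omega\varphi^{p+1}$ cannot be traded for $M$, and feeding $J\lesssim D\cdot\int_\Omega\varphi^{p+1}+\cdots$ into your Step~1/Step~3 produces, after Young, a coefficient $\|\psi\|_{L^\infty(\Omega)}^4/\eta$ in front of $K\int_\Omega\varphi^{p+1}$, which is strictly weaker than the stated $\|\psi\|_{L^\infty(\Omega)}^3\,MK/\eta$ (because $\|\psi\|_{L^\infty(\Omega)}^3 M\le\|\psi\|_{L^\infty(\Omega)}^4\int_\Omega\varphi^{p+1}$, not the other way around). For the Young step in Step~3 to deliver the $\|\psi\|_{L^\infty(\Omega)}^3/\eta$ coefficient claimed in \eqref{eq-6.4}, Step~2 would have to produce a bound of the form $J\lesssim DM/\|\psi\|_{L^\infty(\Omega)}+\cdots$ --- i.e.\ with an \emph{inverse} power of $\|\psi\|_{L^\infty(\Omega)}$ --- and nothing in the described chain of Sobolev/Cauchy--Schwarz/Young moves generates such a factor. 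This is a genuine gap: either the decomposition in Step~2 needs to be structurally different from what you describe, or some additional input (beyond \eqref{eq-6.2} and elementary inequalities) is needed; your honest flag that ``the bookkeeping is the hard part'' is accurate, but it is exactly the part a proof has to carry out, and the outline as given does not do so.
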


To establish a uniform bound for $\|u_{\varepsilon}(t)\|_{L^p(\Omega)}$, independent of $\varepsilon$ and valid for all $p>1$ and $t \in (0, T_{\max,\varepsilon})$, we derive a differential inequality for $\int_{\Omega} u_{\varepsilon}^p(t)$ by invoking Lemmas \ref{lemma-3.4} and \ref{lemma-3.5}.

\begin{lem}\label{lemma-3.9xx}
For $p > 1$, we have
\begin{align}\label{eq-3.1}
& \dt \int_{\Omega} u_{\varepsilon}^p 
+  \frac{p(p-1)}{4} \int_{\Omega} u_{\varepsilon}^{p-1} v_{\varepsilon} \left|\nabla 
u_{\varepsilon}\right|^2 + p  \int_{\Omega} u_{\varepsilon}^{p+1} \nonumber\\
\leqslant &   A \left\{
  \int_{\Omega}v_{\varepsilon}|\nabla u_{\varepsilon}|^2
  +\int_{\Omega} \frac{u_{\varepsilon}}{v_{\varepsilon}}|\nabla v_{\varepsilon}|^2
  +\int_{\Omega} u_{\varepsilon} v_{\varepsilon}  \right\} \cdot \int_{\Omega} u_{\varepsilon}^p  \cdot \int_\Omega \frac{|\nabla v_{\varepsilon}|^4}{v_{\varepsilon}^3} \nonumber\\ 
& +A \int_{\Omega} v_{\varepsilon} |\nabla u_{\varepsilon}|^2 \cdot \int_\Omega \frac{|\nabla v_{\varepsilon}|^4}{v_{\varepsilon}^3} + A \left\{\int_\Omega u_{\varepsilon} \right\}^{2 p+1} \cdot \int_\Omega \frac{|\nabla v_{\varepsilon}|^4}{v_{\varepsilon}^3}\nonumber\\ 
& + A \int_{\Omega} u_{\varepsilon} v_{\varepsilon}  + p \int_{\Omega}u_{\varepsilon}^{p}
\end{align}
for some positive constant $A$ independent of $\varepsilon$. 
\end{lem}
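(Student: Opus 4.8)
The plan is to test the first equation in \eqref{sys-regul} with $p\,u_{\varepsilon}^{p-1}$ and integrate by parts, exactly as in the standard $L^p$-energy method. Writing the taxis-type term $\nabla\cdot(u_\varepsilon^2 v_\varepsilon\nabla v_\varepsilon)$ against $p u_\varepsilon^{p-1}$ produces, after integration by parts, a cross term of the form $p(p-1)\int_\Omega u_\varepsilon^{p}v_\varepsilon\nabla u_\varepsilon\cdot\nabla v_\varepsilon$, which by Young's inequality is absorbed into $\tfrac{p(p-1)}{4}\int_\Omega u_\varepsilon^{p-1}v_\varepsilon|\nabla u_\varepsilon|^2$ plus a constant multiple of $\int_\Omega u_\varepsilon^{p+1}v_\varepsilon|\nabla v_\varepsilon|^2$; the diffusion term $\nabla\cdot(u_\varepsilon v_\varepsilon\nabla u_\varepsilon)$ contributes $-p(p-1)\int_\Omega u_\varepsilon^{p-1}v_\varepsilon|\nabla u_\varepsilon|^2$, half of which is spent on the absorption just mentioned and half of which is retained on the left; and the logistic term $u_\varepsilon-u_\varepsilon^2$ yields $p\int_\Omega u_\varepsilon^{p}-p\int_\Omega u_\varepsilon^{p+1}$, the quadratic part being exactly the dissipative term $p\int_\Omega u_\varepsilon^{p+1}$ kept on the left. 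At this stage one arrives at a differential inequality
\begin{align*}
\dt\int_\Omega u_\varepsilon^p+\frac{p(p-1)}{4}\int_\Omega u_\varepsilon^{p-1}v_\varepsilon|\nabla u_\varepsilon|^2+p\int_\Omega u_\varepsilon^{p+1}
\leqslant C_1\int_\Omega u_\varepsilon^{p+1}v_\varepsilon|\nabla v_\varepsilon|^2+p\int_\Omega u_\varepsilon^p,
\end{align*}
with $C_1=C_1(p)>0$.

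The remaining — and central — task is to control the bad term $C_1\int_\Omega u_\varepsilon^{p+1}v_\varepsilon|\nabla v_\varepsilon|^2$, and this is where Lemma \ref{lemma-3.5} enters. Applying that lemma with $\varphi=u_\varepsilon$, $\psi=v_\varepsilon$ and a suitably small $\eta>0$ — chosen so that $\eta\int_\Omega u_\varepsilon^{p-1}v_\varepsilon|\nabla u_\varepsilon|^2$ is dominated by, say, an eighth of the retained gradient term — replaces $\int_\Omega u_\varepsilon^{p+1}v_\varepsilon|\nabla v_\varepsilon|^2$ by three terms: one of the form $C\big(\|v_\varepsilon\|_{L^\infty}+\|v_\varepsilon\|_{L^\infty}^3/\eta\big)\int_\Omega u_\varepsilon^{p+1}v_\varepsilon\cdot\int_\Omega|\nabla v_\varepsilon|^4/v_\varepsilon^3$, one of the form $C\|v_\varepsilon\|_{L^\infty}^2\big(\int_\Omega u_\varepsilon\big)^{2p+1}\int_\Omega|\nabla v_\varepsilon|^4/v_\varepsilon^3$, and one of the form $C\|v_\varepsilon\|_{L^\infty}^2\int_\Omega u_\varepsilon v_\varepsilon$. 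The last two already have the shape appearing on the right-hand side of \eqref{eq-3.1} (recalling $\|v_\varepsilon\|_{L^\infty}\leqslant\|v_0\|_{L^\infty}$ by \eqref{-2.9}). For the first, one more application of Lemma \ref{lemma-3.4} with $\varphi=u_\varepsilon$, $\psi=v_\varepsilon$ bounds $\int_\Omega u_\varepsilon^{p+1}v_\varepsilon$ by $c\big\{\int_\Omega v_\varepsilon|\nabla u_\varepsilon|^2+\int_\Omega \tfrac{u_\varepsilon}{v_\varepsilon}|\nabla v_\varepsilon|^2+\int_\Omega u_\varepsilon v_\varepsilon\big\}\int_\Omega u_\varepsilon^p+c\int_\Omega v_\varepsilon|\nabla u_\varepsilon|^2$, and multiplying through by $\int_\Omega|\nabla v_\varepsilon|^4/v_\varepsilon^3$ produces precisely the two products displayed in the first two lines of the right-hand side of \eqref{eq-3.1}, namely the term with $\int_\Omega u_\varepsilon^p\cdot\int_\Omega|\nabla v_\varepsilon|^4/v_\varepsilon^3$ and the term $\int_\Omega v_\varepsilon|\nabla u_\varepsilon|^2\cdot\int_\Omega|\nabla v_\varepsilon|^4/v_\varepsilon^3$.

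The main obstacle is bookkeeping rather than a genuine analytic difficulty: one must track the powers of $\|v_\varepsilon\|_{L^\infty}$ (all harmless by \eqref{-2.9}) and, more importantly, make sure that the absorption of the cross term from the taxis integral, the $\eta$-term from Lemma \ref{lemma-3.5}, and the leftover $\tfrac{p(p-1)}{4}\int_\Omega u_\varepsilon^{p-1}v_\varepsilon|\nabla u_\varepsilon|^2$ on the left are mutually consistent — i.e. that the total amount of the gradient quantity $\int_\Omega u_\varepsilon^{p-1}v_\varepsilon|\nabla u_\varepsilon|^2$ consumed by absorption does not exceed what the diffusion term provides. This forces the choice of $\eta$ to depend on $p$ (and hence the constant $A$ to depend on $p$), which is allowed by the statement. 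Finally, I would collect all constants arising from the two lemmas, Young's inequality, \eqref{-2.9}, and the absorption steps into a single $A=A(p)>0$ independent of $\varepsilon$, which yields \eqref{eq-3.1}.
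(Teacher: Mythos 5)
Your proposal matches the paper's proof exactly: test with $pu_\varepsilon^{p-1}$, split the taxis cross term by Young's, absorb $\int_\Omega u_\varepsilon^{p+1}v_\varepsilon|\nabla v_\varepsilon|^2$ via Lemma~\ref{lemma-3.5}, and then bound $\int_\Omega u_\varepsilon^{p+1}v_\varepsilon$ via Lemma~\ref{lemma-3.4}. The only slip is the gradient-term budget: after Young's you should keep $\tfrac{p(p-1)}{2}\int_\Omega u_\varepsilon^{p-1}v_\varepsilon|\nabla u_\varepsilon|^2$ on the left (not $\tfrac{p(p-1)}{4}$), so that the $\eta$-absorption from Lemma~\ref{lemma-3.5} with $\eta=\tfrac12$, after multiplying by the prefactor $\tfrac{p(p-1)}{2}$, consumes $\tfrac{p(p-1)}{4}$ and leaves exactly the stated $\tfrac{p(p-1)}{4}$.
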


\begin{proof}
Multiplying the first equation of the system \eqref{sys-regul} by $u_{\varepsilon}^{p-1}$, integrating by parts and using Young's inequality, we obtain 
\begin{align}\label{eq-3.2}
\dt \int_{\Omega} u_{\varepsilon}^p
= & \ p \int_{\Omega} u_{\varepsilon}^{p-1}\left\{\nabla \cdot(u_{\varepsilon} v_{\varepsilon} \nabla u_{\varepsilon})- \nabla \cdot\left(u_{\varepsilon}^{2} v_{\varepsilon} \nabla v_{\varepsilon}\right)+  u_{\varepsilon}- u_{\varepsilon}^2\right\} \nonumber\\
= &-p(p-1) \int_{\Omega} u_{\varepsilon}^{p-1} v_{\varepsilon}\left|\nabla u_{\varepsilon}\right|^2+p (p-1) \int_{\Omega} u_{\varepsilon}^{p} v_{\varepsilon} \nabla u_{\varepsilon} \cdot \nabla v_{\varepsilon} + p  \int_{\Omega} u_{\varepsilon}^{p} - p  \int_{\Omega} u_{\varepsilon}^{p+1} \nonumber\\
\leqslant & -\frac{p(p-1)}{2} \int_{\Omega} u_{\varepsilon}^{p-1} v_{\varepsilon} \left|\nabla u_{\varepsilon}\right|^2
+\frac{p(p-1)}{2} \int_{\Omega} u_{\varepsilon}^{p+1} v_{\varepsilon}\left|\nabla v_{\varepsilon}\right|^2\nonumber\\
&  + p  \int_{\Omega} u_{\varepsilon}^{p} - p  \int_{\Omega} u_{\varepsilon}^{p+1}
\end{align}
for all $t \in(0, T_{\max, \varepsilon})$ and $\varepsilon \in(0,1)$. Letting $\eta=\frac{1}{2}$ in \eqref{eq-6.4} and using \eqref{-2.9}, we have
\begin{align*}
\frac{p(p-1)}{2} & \int_{\Omega} u_{\varepsilon}^{p+1} v_{\varepsilon}\left|\nabla v_{\varepsilon}\right|^2 
\leqslant \frac{p(p-1) }{4} \int_\Omega u_{\varepsilon}^{p-1} v_{\varepsilon}|\nabla u_{\varepsilon}|^2 
\nonumber\\
&+  \left\{ c_1   \left\|v_{0} \right\|_{L^{\infty}(\Omega)}
  +2c_1  \left\|v_{0} \right\|^3_{L^{\infty}(\Omega)}\right\} \cdot \int_{\Omega} u_{\varepsilon}^{p+1} v_{\varepsilon}  \cdot \int_\Omega \frac{|\nabla v_{\varepsilon}|^4}{v_{\varepsilon}^3} \nonumber\\
&+c_1  \left\|v_{0} \right\|^2_{L^{\infty}(\Omega)} \cdot\left\{\int_\Omega u_{\varepsilon} \right\}^{2 p+1} \cdot \int_\Omega \frac{|\nabla v_{\varepsilon}|^4}{v_{\varepsilon}^3} +  c_1 \left\|v_{0} \right\|^2_{L^{\infty}(\Omega)} \cdot \int_{\Omega} u_{\varepsilon} v_{\varepsilon} 
\end{align*}
for some constant $c_1=c_1(p,\Omega)>0$. Substituting this into \eqref{eq-3.2} yields 
\begin{align}\label{jia-2}
& \dt \int_{\Omega} u_{\varepsilon}^p 
+ \frac{p(p-1)}{4} \int_{\Omega} u_{\varepsilon}^{p-1} v_{\varepsilon} \left|\nabla 
u_{\varepsilon}\right|^2  + p  \int_{\Omega} u_{\varepsilon}^{p+1} \nonumber\\
\leqslant &   \left\{ c_1  \left\|v_{0} \right\|_{L^{\infty}(\Omega)}+2 c_1 \left\|v_{0} \right\|^3_{L^{\infty}(\Omega)}\right\} \cdot \int_{\Omega} u_{\varepsilon}^{p+1} v_{\varepsilon}  \cdot \int_\Omega \frac{|\nabla v_{\varepsilon}|^4}{v_{\varepsilon}^3} \nonumber\\
& +  c_1 \left\|v_{0} \right\|^2_{L^{\infty}(\Omega)} \cdot\left\{\int_\Omega u_{\varepsilon} \right\}^{2 p+1} \cdot \int_\Omega \frac{|\nabla v_{\varepsilon}|^4}{v_{\varepsilon}^3}\nonumber\\
& + c_1   \left\|v_{0} \right\|^2_{L^{\infty}(\Omega)} \cdot \int_{\Omega} u_{\varepsilon} v_{\varepsilon}  + p  \int_{\Omega}  u_{\varepsilon}^{p}
\end{align}
for all $t \in(0, T_{\max, \varepsilon})$ and $\varepsilon \in(0,1)$. It follows from \eqref{eq-6.1} that 
\begin{align}\label{jia-1}
\int_{\Omega} u_{\varepsilon}^{p+1} v_{\varepsilon} 
 \leqslant c_2 \left\{\int_{\Omega} v_{\varepsilon} |\nabla u_{\varepsilon}|^2
 +\int_{\Omega} \frac{u_{\varepsilon}}{v_{\varepsilon}}|\nabla v_{\varepsilon}|^2
  +\int_{\Omega} u_{\varepsilon} v_{\varepsilon}  \right\} \cdot \int_{\Omega} u_{\varepsilon}^p
  + c_2 \int_{\Omega} v_{\varepsilon} |\nabla u_{\varepsilon}|^2
\end{align}
for some constant $c_2=c_2(p, \Omega)>0$. 

Letting $A=\max \left\{c_1c_2  \left\|v_{0} \right\|_{L^{\infty}(\Omega)}+2 c_1c_2  \left\|v_{0} \right\|^3_{L^{\infty}(\Omega)}, c_1 \left\|v_{0} \right\|^2_{L^{\infty}(\Omega)}\right\}$ and combining \eqref{jia-2} with \eqref{jia-1}, we complete the proof.
\end{proof}

From  \eqref{-2.10}, \eqref{0926-12} and \eqref{0926-13}, we know that $\int_\Omega \frac{|\nabla v_{\varepsilon}|^4}{v_{\varepsilon}^3} + \frac{u_{\varepsilon}}{v_{\varepsilon}}|\nabla v_{\varepsilon}|^2+ u_{\varepsilon}v_{\varepsilon} \in L^1(t,t+\tau)$ for all $t \in(0, \widetilde{T}_{\max, \varepsilon})$ and $\varepsilon \in(0,1)$. However, this is not sufficient to solve the differential inequality \eqref{eq-3.1} for $\int_{\Omega} u_{\varepsilon}^p$. To overcome this difficulty, we aim to show that $\int_\Omega \frac{|\nabla v_{\varepsilon}|^4}{v_{\varepsilon}^3} \in(0, T_{\max, \varepsilon})$ and  $\int_{\Omega} v_{\varepsilon}\left|\nabla u_{\varepsilon}\right|^2 \in L^1(t,t+\tau)$. To this end, we derive a differential inequality for the following energy-like functional: $4b \int_{\Omega} u_{\varepsilon} \ln u_{\varepsilon}+\int_{\Omega} \frac{\left|\nabla v_{\varepsilon}\right|^4}{v_{\varepsilon}^3}$.

\begin{lem}\label{lemma-3.6}
There holds 
\begin{align}\label{0903-2036}
\dt\left\{4b \int_{\Omega} u_{\varepsilon} \ln u_{\varepsilon}+\int_{\Omega} \frac{\left|\nabla v_{\varepsilon}\right|^4}{v_{\varepsilon}^3}\right\} \leqslant & -b \int_{\Omega} v_{\varepsilon}\left|\nabla u_{\varepsilon}\right|^2- \int_{\Omega} u_{\varepsilon} v_{\varepsilon}^{-3}\left|\nabla v_{\varepsilon}\right|^4\nonumber\\
& - \int_{\Omega} v_{\varepsilon}^{-1}\left|\nabla v_{\varepsilon}\right|^2\left|D^2 \ln v_{\varepsilon}\right|^2\nonumber\\
& + 4b  \int_{\Omega} u_{\varepsilon}^{2} v_{\varepsilon}\left|\nabla v_{\varepsilon}\right|^2+
4 b  \int_{\Omega} u_{\varepsilon} + c \int_{\Omega} v_{\varepsilon}
\end{align}
for all $t \in(0, T_{\max, \varepsilon})$ and $\varepsilon \in(0,1)$, where $b$ and $c=c(b,\Omega)$ are some positive constants independent of $\varepsilon$.
\end{lem}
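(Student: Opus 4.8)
The plan is to produce the two constituent estimates separately, namely the evolution of $\int_\Omega u_\varepsilon\ln u_\varepsilon$ (which will be multiplied by $4b$) and that of $\int_\Omega v_\varepsilon^{-3}|\nabla v_\varepsilon|^4$, and then to fix $b>0$ so that the cross terms produced on their right-hand sides are absorbed by the dissipative contributions. Since $u_\varepsilon>0$ on $(0,T_{\max,\varepsilon})$ by Lemma \ref{lemma-2.1}, testing the first equation in \eqref{sys-regul} against $\ln u_\varepsilon+1$ and integrating by parts, using $\partial_\nu u_\varepsilon=\partial_\nu v_\varepsilon=0$, yields
\begin{align*}
\dt\int_\Omega u_\varepsilon\ln u_\varepsilon
=-\int_\Omega v_\varepsilon|\nabla u_\varepsilon|^2
+\int_\Omega u_\varepsilon v_\varepsilon\nabla u_\varepsilon\cdot\nabla v_\varepsilon
+\int_\Omega\bigl(u_\varepsilon\ln u_\varepsilon-u_\varepsilon^2\ln u_\varepsilon\bigr)
+\int_\Omega u_\varepsilon-\int_\Omega u_\varepsilon^2 .
\end{align*}
The elementary bound $\xi\ln\xi-\xi^2\ln\xi=\xi\ln\xi\,(1-\xi)\leqslant 0$ for all $\xi>0$, together with $-\int_\Omega u_\varepsilon^2\leqslant 0$, reduces this, upon multiplication by $4b$, to
\begin{align*}
\dt\Bigl(4b\int_\Omega u_\varepsilon\ln u_\varepsilon\Bigr)
\leqslant-4b\int_\Omega v_\varepsilon|\nabla u_\varepsilon|^2
+4b\int_\Omega u_\varepsilon v_\varepsilon\nabla u_\varepsilon\cdot\nabla v_\varepsilon
+4b\int_\Omega u_\varepsilon .
\end{align*}

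For the second functional I would differentiate $\int_\Omega v_\varepsilon^{-3}|\nabla v_\varepsilon|^4$ and substitute $v_{\varepsilon t}=\Delta v_\varepsilon-u_\varepsilon v_\varepsilon$ and $\nabla v_{\varepsilon t}=\nabla\Delta v_\varepsilon-v_\varepsilon\nabla u_\varepsilon-u_\varepsilon\nabla v_\varepsilon$. A short bookkeeping shows that the zero-order sink $-u_\varepsilon v_\varepsilon$ contributes exactly the dissipative summand $-\int_\Omega u_\varepsilon v_\varepsilon^{-3}|\nabla v_\varepsilon|^4$ and the single genuine cross term $-4\int_\Omega v_\varepsilon^{-2}|\nabla v_\varepsilon|^2\nabla u_\varepsilon\cdot\nabla v_\varepsilon$. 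The purely $v_\varepsilon$-dependent part is then treated in the spirit of the proof of Lemma \ref{lem-0927-1}: invoking the pointwise identity $\nabla v_\varepsilon\cdot\nabla\Delta v_\varepsilon=\tfrac12\Delta|\nabla v_\varepsilon|^2-|D^2 v_\varepsilon|^2$, several integrations by parts, and the identities and inequalities of \cite[Lemma~3.2]{2012-CPDE-Winkler}, \cite[Lemma~3.3]{2012-CPDE-Winkler} and \cite[Lemma~3.4]{2022-NARWA-Winkler}, one extracts the dissipative term $-\int_\Omega v_\varepsilon^{-1}|\nabla v_\varepsilon|^2|D^2\ln v_\varepsilon|^2$---which by those same inequalities additionally dominates a fixed multiple of $\int_\Omega v_\varepsilon^{-5}|\nabla v_\varepsilon|^6+\int_\Omega v_\varepsilon^{-3}|\nabla v_\varepsilon|^2|D^2 v_\varepsilon|^2$---plus boundary integrals over $\partial\Omega$. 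The boundary integrals are estimated as in \eqref{n-3.13}, via the curvature bound $\partial_\nu|\nabla v_\varepsilon|^2\leqslant c_2|\nabla v_\varepsilon|^2$, a boundary-trace embedding, Young's inequality and $v_\varepsilon\leqslant\|v_0\|_{L^\infty(\Omega)}$ from \eqref{-2.9}, and are thereby absorbed into the two dissipative terms up to an additive $c\int_\Omega v_\varepsilon$.

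It remains to add the two inequalities and to select $b$. Young's inequality applied to the first cross term gives, with the precise weight, $4b\int_\Omega u_\varepsilon v_\varepsilon\nabla u_\varepsilon\cdot\nabla v_\varepsilon\leqslant b\int_\Omega v_\varepsilon|\nabla u_\varepsilon|^2+4b\int_\Omega u_\varepsilon^2 v_\varepsilon|\nabla v_\varepsilon|^2$, which accounts for the term $4b\int_\Omega u_\varepsilon^2 v_\varepsilon|\nabla v_\varepsilon|^2$ retained in \eqref{0903-2036}; applied to the second cross term it gives $-4\int_\Omega v_\varepsilon^{-2}|\nabla v_\varepsilon|^2\nabla u_\varepsilon\cdot\nabla v_\varepsilon\leqslant 2b\int_\Omega v_\varepsilon|\nabla u_\varepsilon|^2+\tfrac{2}{b}\int_\Omega v_\varepsilon^{-5}|\nabla v_\varepsilon|^6$, and the last integral is absorbed by $-\int_\Omega v_\varepsilon^{-1}|\nabla v_\varepsilon|^2|D^2\ln v_\varepsilon|^2$ once $b$ is fixed large enough; this is feasible because the coefficient of that dissipative term coming out of the computation is a fixed positive number, while $\tfrac{2}{b}$ can be made as small as desired. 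Since $-4b+b+2b=-b$, the surviving $|\nabla u_\varepsilon|^2$-dissipation equals exactly $-b\int_\Omega v_\varepsilon|\nabla u_\varepsilon|^2$, and after collecting the remaining terms ($-\int_\Omega u_\varepsilon v_\varepsilon^{-3}|\nabla v_\varepsilon|^4$, $-\int_\Omega v_\varepsilon^{-1}|\nabla v_\varepsilon|^2|D^2\ln v_\varepsilon|^2$, $4b\int_\Omega u_\varepsilon$, $c\int_\Omega v_\varepsilon$) one arrives at \eqref{0903-2036}.

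I expect the main obstacle to be the explicit evaluation of $\dt\int_\Omega v_\varepsilon^{-3}|\nabla v_\varepsilon|^4$: it is a long and error-prone chain of integrations by parts in which every boundary contribution must be tracked, and one must then verify that all emerging remainders---the $u_\varepsilon$--$v_\varepsilon$ cross term and the boundary integrals---genuinely fit inside the three available dissipation integrals $\int_\Omega v_\varepsilon|\nabla u_\varepsilon|^2$, $\int_\Omega v_\varepsilon^{-1}|\nabla v_\varepsilon|^2|D^2\ln v_\varepsilon|^2$ and $\int_\Omega u_\varepsilon v_\varepsilon^{-3}|\nabla v_\varepsilon|^4$; this relies on the integral inequality relating $|D^2\ln v_\varepsilon|$ to $|D^2 v_\varepsilon|$ and $v_\varepsilon^{-2}|\nabla v_\varepsilon|^4$ together with the a priori bound $v_\varepsilon\leqslant\|v_0\|_{L^\infty(\Omega)}$. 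The summand $4b\int_\Omega u_\varepsilon^2 v_\varepsilon|\nabla v_\varepsilon|^2$ is deliberately left on the right-hand side, since no dissipation remains to absorb it at this stage; it will be controlled subsequently by means of Lemma \ref{lemma-3.5}.
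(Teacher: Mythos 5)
Your proposal is correct and follows essentially the same approach as the paper: combine the evolution inequalities for $\int_\Omega u_\varepsilon\ln u_\varepsilon$ and $\int_\Omega v_\varepsilon^{-3}|\nabla v_\varepsilon|^4$, apply Young's inequality to the two cross terms so that the $\int_\Omega v_\varepsilon|\nabla u_\varepsilon|^2$ contributions combine to leave exactly $-b$, and absorb the resulting $\tfrac{2}{b}\int_\Omega v_\varepsilon^{-5}|\nabla v_\varepsilon|^6$ together with the boundary term into the dissipation $\int_\Omega v_\varepsilon^{-1}|\nabla v_\varepsilon|^2|D^2\ln v_\varepsilon|^2$ via the auxiliary inequalities of \cite[Lemmas 3.4--3.5]{2022-DCDSSB-Winkler}. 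The only bookkeeping differences are that the paper invokes \cite[Lemma 2.3]{2022-JDE-Li} as a black box for the evolution inequality of $\int_\Omega v_\varepsilon^{-3}|\nabla v_\varepsilon|^4$ rather than re-deriving it, and takes $b$ to be exactly the constant supplied by \cite[Lemma 3.4]{2022-DCDSSB-Winkler} rather than ``choosing $b$ large enough'' --- which is equivalent, since the absorption only requires $b$ to be at least that constant.
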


\begin{proof}
According to\cite[Lemma 3.4]{2022-DCDSSB-Winkler}, there exists a constant $b>0$ independent of $\varepsilon$ such that
\begin{align}\label{-3.21}
\int_{\Omega} \frac{|\nabla v_{\varepsilon}|^6}{v_{\varepsilon}^5} \leqslant b \int_{\Omega} v_{\varepsilon}^{-1}|\nabla v_{\varepsilon}|^2\left|D^2 \ln v_{\varepsilon}\right|^2
\end{align}
and
\begin{align}\label{-3.21a}
\int_{\Omega} v_{\varepsilon}^{-3}|\nabla v_{\varepsilon}|^{2}\left|D^2 v_{\varepsilon}\right|^2 \leqslant b \int_{\Omega} v_{\varepsilon}^{-1}|\nabla v_{\varepsilon}|^{2}\left|D^2 \ln v_{\varepsilon}\right|^2.
\end{align}
We can from \cite[Lemma 2.3]{2022-JDE-Li} obtain 
\begin{align}\label{-3.22}
\frac{d}{d t} \int_{\Omega} \frac{\left|\nabla v_{\varepsilon}\right|^4}{v_{\varepsilon}^3} + & 4 \int_{\Omega} v_{\varepsilon}^{-1}\left|\nabla v_{\varepsilon}\right|^2\left|D^2 \ln v_{\varepsilon}\right|^2  +\int_{\Omega} u_{\varepsilon} v_{\varepsilon}^{-3}\left|\nabla v_{\varepsilon}\right|^4  \nonumber\\
\leqslant & -4 \int_{\Omega} v_{\varepsilon}^{-2}\left|\nabla v_{\varepsilon}\right|^2\left(\nabla u_{\varepsilon} \cdot \nabla v_{\varepsilon}\right)
+ 2 \int_{\partial \Omega} v_{\varepsilon}^{-3}\left|\nabla v_{\varepsilon}\right|^2 \frac{\partial \left|\nabla v_{\varepsilon}\right|^2}{\partial \nu}
\end{align}
for all $t \in(0, T_{\max, \varepsilon})$ and $\varepsilon \in(0,1)$. Invoking Young's inequality and \eqref{-3.21} yields that
\begin{align}\label{-3.23}
-4 \int_{\Omega} v_{\varepsilon}^{-2}\left|\nabla v_{\varepsilon}\right|^2\left(\nabla u_{\varepsilon} \cdot \nabla v_{\varepsilon}\right) & \leqslant \frac{2}{b} \int_{\Omega} \frac{\left|\nabla v_{\varepsilon}\right|^6}{v_{\varepsilon}^5}+2 b \int_{\Omega} v_{\varepsilon}\left|\nabla u_{\varepsilon}\right|^2 \nonumber\\
& \leqslant 2 \int_{\Omega} v_{\varepsilon}^{-1}\left|\nabla v_{\varepsilon}\right|^2\left|D^2 \ln v_{\varepsilon}\right|^2 +2 b \int_{\Omega} v_{\varepsilon}\left|\nabla u_{\varepsilon}\right|^2.
\end{align}
In light of \eqref{-3.21}-\eqref{-3.21a} and \cite[Lemma 3.5]{2022-DCDSSB-Winkler}, one can find a constant $c=c(b,\Omega)>0$ such that
\begin{align}\label{-3.23a}
2 \int_{\partial \Omega} v_{\varepsilon}^{-3}|\nabla v_{\varepsilon}|^{2} \cdot \frac{\partial|\nabla v_{\varepsilon}|^2}{\partial \nu} 
\leqslant & \frac{1}{2b} \int_{\Omega} v_{\varepsilon}^{-3}|\nabla v_{\varepsilon}|^{2}\left|D^2 v_{\varepsilon}\right|^2+\frac{1}{2b} \int_{\Omega} \frac{\left|\nabla v_{\varepsilon}\right|^6}{v_{\varepsilon}^5}+c \int_{\Omega} v_{\varepsilon}\nonumber\\
\leqslant &  \int_{\Omega} v_{\varepsilon}^{-1}\left|\nabla v_{\varepsilon}\right|^2\left|D^2 \ln v_{\varepsilon}\right|^2  + c \int_{\Omega} v_{\varepsilon}. 
\end{align}
Multiplying the first equation in \eqref{sys-regul} by $1+\ln u_{\varepsilon}$, using $\ln \xi -\xi \ln \xi \leqslant 0$ for all $\xi > 0$ and neglecting a nonpositive summand, we use Cauchy-Schwarz inequality to infer that
\begin{align}\label{-3.24}
\dt \int_{\Omega} u_{\varepsilon} \ln u_{\varepsilon}&+\int_{\Omega} v_{\varepsilon}\left|\nabla u_{\varepsilon}\right|^2   =  \int_{\Omega} u_{\varepsilon} v_{\varepsilon} \nabla u_{\varepsilon} \cdot \nabla v_{\varepsilon}+ \int_{\Omega} u_{\varepsilon}  \ln u_{\varepsilon}\nonumber\\
& - \int_{\Omega} u_{\varepsilon}^2 \ln u_{\varepsilon} - \int_{\Omega} u_{\varepsilon}^2  + \int_{\Omega} u_{\varepsilon}\nonumber\\
& \leqslant   \frac{1}{4} \int_{\Omega} v_{\varepsilon}\left|\nabla u_{\varepsilon}\right|^2+ \int_{\Omega} u_{\varepsilon}^{2} v_{\varepsilon}\left|\nabla v_{\varepsilon}\right|^2 + \int_{\Omega} u_{\varepsilon}
\end{align}
for all $t \in(0, T_{\max, \varepsilon})$ and $\varepsilon \in(0,1)$. Gathering \eqref{-3.22}-\eqref{-3.24}, we conclude that
\begin{align*}
\dt\left\{4 b \int_{\Omega} u_{\varepsilon} \ln u_{\varepsilon}+\int_{\Omega} \frac{\left|\nabla v_{\varepsilon}\right|^4}{v_{\varepsilon}^3}\right\} 
\leqslant & -3 b \int_{\Omega} v_{\varepsilon}\left|\nabla u_{\varepsilon}\right|^2 +4 b \int_{\Omega} u_{\varepsilon}^{2} v_{\varepsilon}\left|\nabla v_{\varepsilon}\right|^2+4  b \int_{\Omega} u_{\varepsilon}  \nonumber\\
&-\int_{\Omega} u_{\varepsilon} v_{\varepsilon}^{-3}\left|\nabla v_{\varepsilon}\right|^4- \int_{\Omega} v_{\varepsilon}^{-1}\left|\nabla v_{\varepsilon}\right|^2\left|D^2 \ln v_{\varepsilon}\right|^2\nonumber\\
& +2 b \int_{\Omega} v_{\varepsilon}\left|\nabla u_{\varepsilon}\right|^2 + c \int_{\Omega} v_{\varepsilon} \nonumber\\
= & -b \int_{\Omega} v_{\varepsilon}\left|\nabla u_{\varepsilon}\right|^2- \int_{\Omega} u_{\varepsilon} v_{\varepsilon}^{-3}\left|\nabla v_{\varepsilon}\right|^4 \nonumber\\
& - \int_{\Omega} v_{\varepsilon}^{-1}\left|\nabla v_{\varepsilon}\right|^2\left|D^2 \ln v_{\varepsilon}\right|^2+ 4 b \int_{\Omega} u_{\varepsilon}^{2} v_{\varepsilon}\left|\nabla v_{\varepsilon}\right|^2 \nonumber\\
& + 4  b  \int_{\Omega} u_{\varepsilon}  + c \int_{\Omega} v_{\varepsilon}
\end{align*}
for all $t \in(0, T_{\max, \varepsilon})$ and $\varepsilon \in(0,1)$. The proof is complete.
\end{proof}

We are now in a position to derive $\varepsilon$- and $t$-independent estimates for 
$\int_{\Omega} u_{\varepsilon}^p$. To overcome this difficulty, we aim to show that $\int_\Omega \frac{|\nabla v_{\varepsilon}|^4}{v_{\varepsilon}^3} \in(0, T_{\max, \varepsilon})$
and
\(v_{\varepsilon} |\nabla u_{\varepsilon}|^2 \in L^1((t, t+\tau); L^1(\Omega)),
\)
by applying Lemma \ref{lemma-3.5} together with the differential inequality for the 
above energy-like functional.

\begin{lem}\label{lemma-3.8}
Assume that \eqref{assIniVal} holds. Then there exists constant $C>0$, independent of $t$ and $\varepsilon$, such that 
\begin{align}\label{0926-14}
\int_{\Omega} \frac{|\nabla v_{\varepsilon}|^4}{v_{\varepsilon}^3}  \leqslant C  \quad \text { for all } t \in\left(0, {T}_{\max,\varepsilon } \right) \text { and } \varepsilon \in(0,1)
\end{align}
and
\begin{align}\label{0927-14}
\int_t^{t+\tau} \int_{\Omega} v_{\varepsilon}\left|\nabla u_{\varepsilon}\right|^2  \leqslant C \quad \text { for all } t \in\left(0, \widetilde T_{\max, \varepsilon}\right) \text {  and   } \varepsilon \in(0,1)
\end{align}
as well as
\begin{align}\label{0927-11}
\int_t^{t+\tau} \int_{\Omega}\frac{\left|\nabla v_{\varepsilon}\right|^6}{v_{\varepsilon}^5} \leqslant C  \quad \text { for all } t \in\left(0,  \widetilde{T}_{\max, \varepsilon} \right) \text { and } \varepsilon \in(0,1)
\end{align}
where $\tau$ and $\widetilde{T}_{\max, \varepsilon}$ are defined by \eqref{-3.5ss}.
\end{lem}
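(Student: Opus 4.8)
The plan is to combine the differential inequality in Lemma~\ref{lemma-3.6} with the lemmas collected in Section~\ref{section1}, in order to absorb the ``bad'' term $4b\int_{\Omega}u_{\varepsilon}^2 v_{\varepsilon}|\nabla v_{\varepsilon}|^2$ on the right-hand side of \eqref{0903-2036} into the dissipative terms on the left. Set
\begin{align*}
z_{\varepsilon}(t):=4b\int_{\Omega}u_{\varepsilon}\ln u_{\varepsilon}+\int_{\Omega}\frac{|\nabla v_{\varepsilon}|^4}{v_{\varepsilon}^3}.
\end{align*}
The first step is to estimate $4b\int_{\Omega}u_{\varepsilon}^2 v_{\varepsilon}|\nabla v_{\varepsilon}|^2$. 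Applying Lemma~\ref{lemma-3.5} with $\varphi=u_{\varepsilon}$, $\psi=v_{\varepsilon}$, $p=1$ and a suitably small $\eta>0$, together with \eqref{-2.9}, gives
\begin{align*}
4b\int_{\Omega}u_{\varepsilon}^2 v_{\varepsilon}|\nabla v_{\varepsilon}|^2
\leqslant \tfrac{b}{2}\int_{\Omega}v_{\varepsilon}|\nabla u_{\varepsilon}|^2
+ C_1\Bigl(\int_{\Omega}u_{\varepsilon}^2 v_{\varepsilon}\Bigr)\int_{\Omega}\frac{|\nabla v_{\varepsilon}|^4}{v_{\varepsilon}^3}
+ C_1\Bigl(\int_{\Omega}u_{\varepsilon}\Bigr)^{3}\int_{\Omega}\frac{|\nabla v_{\varepsilon}|^4}{v_{\varepsilon}^3}
+ C_1\int_{\Omega}u_{\varepsilon}v_{\varepsilon},
\end{align*}
and then Lemma~\ref{lemma-3.4} (again with $p=1$) bounds $\int_{\Omega}u_{\varepsilon}^2 v_{\varepsilon}$ by $C_2\bigl\{\int_{\Omega}v_{\varepsilon}|\nabla u_{\varepsilon}|^2+\int_{\Omega}\frac{u_{\varepsilon}}{v_{\varepsilon}}|\nabla v_{\varepsilon}|^2+\int_{\Omega}u_{\varepsilon}v_{\varepsilon}\bigr\}\cdot\int_{\Omega}u_{\varepsilon}+C_2\int_{\Omega}v_{\varepsilon}|\nabla u_{\varepsilon}|^2$. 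Using \eqref{-3.4} to control $\int_{\Omega}u_{\varepsilon}$ by the constant $m$, everything is expressed in terms of the three quantities $\int_{\Omega}v_{\varepsilon}|\nabla u_{\varepsilon}|^2$, $\int_{\Omega}\frac{u_{\varepsilon}}{v_{\varepsilon}}|\nabla v_{\varepsilon}|^2$, $\int_{\Omega}u_{\varepsilon}v_{\varepsilon}$, each multiplied by $\int_{\Omega}\frac{|\nabla v_{\varepsilon}|^4}{v_{\varepsilon}^3}$, plus lower-order $\int_{\Omega}u_{\varepsilon}v_{\varepsilon}$ and $\int_{\Omega}v_{\varepsilon}$ terms.

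The second step is to insert this back into \eqref{0903-2036}. The term $C\bigl(\int_{\Omega}v_{\varepsilon}|\nabla u_{\varepsilon}|^2\bigr)\cdot\int_{\Omega}\frac{|\nabla v_{\varepsilon}|^4}{v_{\varepsilon}^3}$ that now appears cannot be absorbed directly into $-b\int_{\Omega}v_{\varepsilon}|\nabla u_{\varepsilon}|^2$, so instead I treat it by putting the factor $\int_{\Omega}\frac{|\nabla v_{\varepsilon}|^4}{v_{\varepsilon}^3}$ on the ``$b(t)$'' side of a Gronwall-type inequality. Concretely, after absorbing the $\tfrac{b}{2}\int_{\Omega}v_{\varepsilon}|\nabla u_{\varepsilon}|^2$ piece into $-b\int_{\Omega}v_{\varepsilon}|\nabla u_{\varepsilon}|^2$ and retaining $-\tfrac{b}{2}\int_{\Omega}v_{\varepsilon}|\nabla u_{\varepsilon}|^2$ and $-\int_{\Omega}u_{\varepsilon}v_{\varepsilon}^{-3}|\nabla v_{\varepsilon}|^4$ and $-\int_{\Omega}v_{\varepsilon}^{-1}|\nabla v_{\varepsilon}|^2|D^2\ln v_{\varepsilon}|^2$ on the left, the differential inequality takes the schematic form
\begin{align*}
z_{\varepsilon}'(t)+\tfrac{b}{2}\int_{\Omega}v_{\varepsilon}|\nabla u_{\varepsilon}|^2+\int_{\Omega}u_{\varepsilon}v_{\varepsilon}^{-3}|\nabla v_{\varepsilon}|^4
\leqslant g_{\varepsilon}(t)\,z_{\varepsilon}(t)+h_{\varepsilon}(t),
\end{align*}
where $g_{\varepsilon}(t)=C\bigl(\int_{\Omega}v_{\varepsilon}|\nabla u_{\varepsilon}|^2+\int_{\Omega}\frac{u_{\varepsilon}}{v_{\varepsilon}}|\nabla v_{\varepsilon}|^2+\int_{\Omega}u_{\varepsilon}v_{\varepsilon}\bigr)$ and $h_{\varepsilon}(t)=C\bigl(\int_{\Omega}u_{\varepsilon}v_{\varepsilon}+\int_{\Omega}v_{\varepsilon}+\int_{\Omega}u_{\varepsilon}\bigr)$ — here I have used that $z_{\varepsilon}(t)\geqslant 4b\int_{\Omega}u_{\varepsilon}\ln u_{\varepsilon}+\int_{\Omega}\frac{|\nabla v_{\varepsilon}|^4}{v_{\varepsilon}^3}\geqslant -\tfrac{4b|\Omega|}{e}+\int_{\Omega}\frac{|\nabla v_{\varepsilon}|^4}{v_{\varepsilon}^3}$, so that up to the additive constant $\tfrac{4b|\Omega|}{e}$ the factor $\int_{\Omega}\frac{|\nabla v_{\varepsilon}|^4}{v_{\varepsilon}^3}$ multiplying $g_{\varepsilon}$ is controlled by $z_{\varepsilon}+\text{const}$ (the resulting constant$\times g_{\varepsilon}$ contribution is moved into $h_{\varepsilon}$). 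By \eqref{-2.10}, \eqref{0926-12}, \eqref{0926-13}, \eqref{0927-12} and \eqref{-3.4}--\eqref{-3.4ss} we have $\sup_{t}\int_t^{t+\tau}g_{\varepsilon}\leqslant g_1$ and $\sup_t\int_t^{t+\tau}h_{\varepsilon}\leqslant h_1$ uniformly in $\varepsilon$; also $\sup_t\int_t^{t+\tau}z_{\varepsilon}\leqslant z_1$ uniformly, since $\int_{\Omega}u_{\varepsilon}\ln u_{\varepsilon}\leqslant\int_{\Omega}u_{\varepsilon}^2$ is time-integrable by \eqref{-3.5} and $\int_t^{t+\tau}\int_{\Omega}\frac{|\nabla v_{\varepsilon}|^4}{v_{\varepsilon}^3}$ is bounded by \eqref{0926-12}.

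The third step is then to apply Lemma~\ref{lem-0926-2} (the uniform-Gronwall lemma of Temam) to the inequality $z_{\varepsilon}'(t)\leqslant g_{\varepsilon}(t)z_{\varepsilon}(t)+h_{\varepsilon}(t)$ — valid after shifting $z_{\varepsilon}$ by the constant $\tfrac{4b|\Omega|}{e}$ so that it becomes nonnegative — which yields $z_{\varepsilon}(t)\leqslant(\tfrac{z_1}{\tau}+h_1)e^{g_1}=:C$ for all $t\in(0,T_{\max,\varepsilon})$, uniformly in $\varepsilon$. Combining with the lower bound $z_{\varepsilon}(t)\geqslant-\tfrac{4b|\Omega|}{e}+\int_{\Omega}\frac{|\nabla v_{\varepsilon}|^4}{v_{\varepsilon}^3}$ gives \eqref{0926-14}. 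Finally, integrating the full differential inequality over $(t,t+\tau)$ and using the bound on $z_{\varepsilon}$ together with $0<\tau\leqslant 1$ and the uniform $L^1$-in-time bounds on $g_{\varepsilon}z_{\varepsilon}$ and $h_{\varepsilon}$, we recover $\int_t^{t+\tau}\int_{\Omega}v_{\varepsilon}|\nabla u_{\varepsilon}|^2\leqslant C$, i.e. \eqref{0927-14}; and since \eqref{-3.21} gives $\int_{\Omega}\frac{|\nabla v_{\varepsilon}|^6}{v_{\varepsilon}^5}\leqslant b\int_{\Omega}v_{\varepsilon}^{-1}|\nabla v_{\varepsilon}|^2|D^2\ln v_{\varepsilon}|^2$, the space-time bound \eqref{0927-11} follows from the time-integrated estimate on the dissipation term $\int_{\Omega}v_{\varepsilon}^{-1}|\nabla v_{\varepsilon}|^2|D^2\ln v_{\varepsilon}|^2$, which is likewise controlled after integration. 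The main obstacle is the bookkeeping in the second step: one must arrange the application of Lemmas~\ref{lemma-3.4} and~\ref{lemma-3.5} so that \emph{all} occurrences of $\int_{\Omega}v_{\varepsilon}|\nabla u_{\varepsilon}|^2$ that are \emph{not} multiplied by $\int_{\Omega}\frac{|\nabla v_{\varepsilon}|^4}{v_{\varepsilon}^3}$ are genuinely absorbable into $-b\int_{\Omega}v_{\varepsilon}|\nabla u_{\varepsilon}|^2$ (this forces the choice $\eta$ small enough), while the single remaining product $\bigl(\int_{\Omega}v_{\varepsilon}|\nabla u_{\varepsilon}|^2\bigr)\cdot\int_{\Omega}\frac{|\nabla v_{\varepsilon}|^4}{v_{\varepsilon}^3}$ is correctly interpreted as the $g_{\varepsilon}(t)z_{\varepsilon}(t)$ term so that Lemma~\ref{lem-0926-2} is applicable; verifying the three supremum bounds on $\int_t^{t+\tau}g_{\varepsilon}$, $\int_t^{t+\tau}h_{\varepsilon}$, $\int_t^{t+\tau}z_{\varepsilon}$ is then a direct consequence of the estimates already established in Lemmas~\ref{lem-1st-est}--\ref{lem-0927-1}.
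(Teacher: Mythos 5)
There is a genuine gap in your second step, and it is a circularity. After applying Lemma~\ref{lemma-3.5} you obtain the factor $\int_{\Omega}u_{\varepsilon}^{2}v_{\varepsilon}$ multiplying $\int_{\Omega}\frac{|\nabla v_{\varepsilon}|^4}{v_{\varepsilon}^3}$, and you then \emph{further} expand $\int_{\Omega}u_{\varepsilon}^{2}v_{\varepsilon}$ via Lemma~\ref{lemma-3.4}. This brings $\int_{\Omega}v_{\varepsilon}|\nabla u_{\varepsilon}|^{2}$ into the Gronwall coefficient $g_{\varepsilon}(t)$. To invoke Lemma~\ref{lem-0926-2} you then need $\sup_{t}\int_{t}^{t+\tau}g_{\varepsilon}\leqslant g_{1}$ uniformly in $\varepsilon$, and you cite \eqref{-2.10}, \eqref{0926-12}, \eqref{0926-13}, \eqref{0927-12} and \eqref{-3.4}--\eqref{-3.4ss} for this. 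None of these estimates --- nor anything established earlier in Section~\ref{section3} --- controls $\int_{t}^{t+\tau}\int_{\Omega}v_{\varepsilon}|\nabla u_{\varepsilon}|^{2}$; that bound is precisely item \eqref{0927-14} of the lemma you are trying to prove. So the hypothesis of Lemma~\ref{lem-0926-2} that $\int_{t}^{t+\tau}g_{\varepsilon}(s)\,ds$ be uniformly bounded is not available, and your application of the uniform-Gronwall lemma is unjustified.

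The fix is not to decompose $\int_{\Omega}u_{\varepsilon}^{2}v_{\varepsilon}$ at all. The paper instead bounds it crudely but sufficiently by
\begin{align*}
\int_{\Omega}u_{\varepsilon}^{2}v_{\varepsilon}\;\leqslant\;\|v_{0}\|_{L^{\infty}(\Omega)}\int_{\Omega}u_{\varepsilon}^{2}
\end{align*}
using \eqref{-2.9}, so that the Gronwall coefficient becomes $Z_{\varepsilon}(t)=B\int_{\Omega}u_{\varepsilon}^{2}$, whose space-time integral $\int_{t}^{t+\tau}Z_{\varepsilon}$ is already controlled by Lemma~\ref{lem-1st-est2} (estimate \eqref{-3.5}). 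After this replacement your remaining bookkeeping (the lower bound $\int_{\Omega}u_{\varepsilon}\ln u_{\varepsilon}\geqslant -|\Omega|/e$, the application of Lemma~\ref{lem-0926-2}, and the final time-integration to recover \eqref{0927-14} and, via \eqref{-3.21}, \eqref{0927-11}) goes through as you describe and matches the paper's proof.
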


\begin{proof}
We use \eqref{0903-2036} to derive the estimates in this lemma. An application of \eqref{eq-6.4} with $p=1$ and $\eta =\frac{1}{8}$ provides $c_1=c_1(\Omega)>0$ such that
\begin{align*}
 4b  \int_\Omega u_{\varepsilon}^{2} v_{\varepsilon} |\nabla v_{\varepsilon}|^2
\leqslant & \frac{b}{2} \int_\Omega  v_{\varepsilon}|\nabla u_{\varepsilon}|^2 + \left\{c_1 b \left\|v_{\varepsilon}\right\|_{L^{\infty}(\Omega)}+8 c_1 b  \left\|v_{\varepsilon} \right\|^3_{L^{\infty}(\Omega)}\right\} \cdot \int_{\Omega} u_{\varepsilon}^{2} v_{\varepsilon} \cdot \int_\Omega \frac{|\nabla v_{\varepsilon}|^4}{v_{\varepsilon}^3} \nonumber\\
&+c_1 b  \left\|v_{\varepsilon} \right\|^2_{L^{\infty}(\Omega)} \cdot\left\{\int_\Omega u_{\varepsilon}\right\}^{3} \cdot \int_\Omega \frac{|\nabla v_{\varepsilon}|^4}{v_{\varepsilon}^3}+c_1 b  \left\|v_{\varepsilon} \right\|^2_{L^{\infty}(\Omega)}\cdot \int_{\Omega} u_{\varepsilon} v_{\varepsilon}.
\end{align*}
Substituting this into \eqref{0903-2036} and using $\xi \ln \xi +\frac{1}{e}\geqslant 0$ for all $\xi>0$, thanks to \eqref{-2.9} and \eqref{-3.4}, we obtain 
\begin{align}\label{-3.33}
\dt\left\{4 b \int_{\Omega} u_{\varepsilon} \ln u_{\varepsilon} +\int_{\Omega} \frac{\left|\nabla v_{\varepsilon}\right|^4}{v_{\varepsilon}^3}\right\} & + \frac{b}{2} \int_{\Omega} v_{\varepsilon}\left|\nabla u_{\varepsilon}\right|^2 \nonumber\\
& + \int_{\Omega} u_{\varepsilon} v_{\varepsilon}^{-3}\left|\nabla v_{\varepsilon}\right|^4 + \int_{\Omega} v_{\varepsilon}^{-1}\left|\nabla v_{\varepsilon}\right|^2\left|D^2 \ln v_{\varepsilon}\right|^2\nonumber\\
\leqslant & B \int_{\Omega} u_{\varepsilon}^{2} \cdot \int_\Omega \frac{|\nabla v_{\varepsilon}|^4}{v_{\varepsilon}^3} +c_1 b  m^3  \left\|v_{0} \right\|^2_{L^{\infty}(\Omega)}\cdot \int_\Omega \frac{|\nabla v_{\varepsilon}|^4}{v_{\varepsilon}^3}\nonumber\\
& +4  b m  + c_1 b \left\|v_{0} \right\|^2_{L^{\infty}(\Omega)} \cdot \int_{\Omega} u_{\varepsilon} v_{\varepsilon}+ c \int_{\Omega} v_{\varepsilon}\nonumber\\
\leqslant & B \int_{\Omega} u_{\varepsilon}^{2}  \cdot \left(4 b \int_{\Omega} u_{\varepsilon} \ln u_{\varepsilon} +\int_{\Omega} \frac{\left|\nabla v_{\varepsilon}\right|^4}{v_{\varepsilon}^3}+\frac{4b|\Omega|}{e}\right) \nonumber \\
& +c_1 b  m^3  \left\|v_{0} \right\|^2_{L^{\infty}(\Omega)}\cdot \int_\Omega \frac{|\nabla v_{\varepsilon}|^4}{v_{\varepsilon}^3} +4  b m \nonumber\\
& + c_1 b \left\|v_{0} \right\|^2_{L^{\infty}(\Omega)} \cdot \int_{\Omega} u_{\varepsilon} v_{\varepsilon}+c \int_{\Omega} v_{\varepsilon} \nonumber\\
= & B \int_{\Omega} u_{\varepsilon}^{2}   \cdot \left(4 b \int_{\Omega} u_{\varepsilon} \ln u_{\varepsilon} +\int_{\Omega} \frac{\left|\nabla v_{\varepsilon}\right|^4}{v_{\varepsilon}^3}\right) \nonumber \\
& +c_1 b  m^3  \left\|v_{0} \right\|^2_{L^{\infty}(\Omega)}\cdot \int_\Omega \frac{|\nabla v_{\varepsilon}|^4}{v_{\varepsilon}^3}+ 4bm \nonumber\\
&  + c_1 b \left\|v_{0} \right\|^2_{L^{\infty}(\Omega)} \cdot \int_{\Omega} u_{\varepsilon} v_{\varepsilon}  + \frac{4  b B|\Omega|}{e} \cdot \int_{\Omega} u^2_{\varepsilon} \nonumber\\
& + c \int_{\Omega} v_{\varepsilon}
\end{align}
for all $t \in(0, T_{\max, \varepsilon})$ and $\varepsilon \in(0,1)$, where  $B=c_1 b \left\|v_{0} \right\|^2_{L^{\infty}(\Omega)}+8 c_1 b \left\|v_{0} \right\|^4_{L^{\infty}(\Omega)}$. Setting
\begin{align*}
F_{\varepsilon}(t):= 4b \int_{\Omega} u_{\varepsilon} \ln u_{\varepsilon}+\int_{\Omega} \frac{\left|\nabla v_{\varepsilon}\right|^4}{v_{\varepsilon}^3}
\end{align*}
and
\begin{align*}
Z_{\varepsilon}(t):= B\int_{\Omega} u_{\varepsilon}^{2} 
\end{align*}
as well as
\begin{align*}
M_{\varepsilon}(t) := & c_1 b  m^3 \left\|v_{0} \right\|^2_{L^{\infty}(\Omega)}   \cdot \int_\Omega \frac{|\nabla v_{\varepsilon}|^4}{v_{\varepsilon}^3} + 4bm \\
& + c_1 b \left\|v_{0} \right\|^2_{L^{\infty}(\Omega)} \cdot \int_{\Omega} u_{\varepsilon} v_{\varepsilon} +  \frac{4bB|\Omega|}{e} \cdot \int_{\Omega} u^2_{\varepsilon}+ c \int_{\Omega} v_{\varepsilon},  
\end{align*}
We rewrite \eqref{-3.33} as follows
\begin{align}\label{-3.34}
F_{\varepsilon}^{\prime}(t) \leqslant  Z_{\varepsilon}(t) F_{\varepsilon}(t)+M_{\varepsilon}(t) \quad \text { for all } t \in(0, T_{\max, \varepsilon}) \text { and } \varepsilon \in(0,1). 
\end{align}
By Lemmas \ref{lem-1st-est2} and \ref{lem-0927-1}, we have 
\begin{align*}
\int_t^{t+\tau} Z_{\varepsilon}(\sigma) \d \sigma \leqslant  c_2 \quad \text { for all } t \in\left(0, \widetilde T_{\max, \varepsilon}\right) \text {  and   } \varepsilon \in(0,1)
\end{align*}
and
\begin{align*}
\int_t^{t+\tau} F_{\varepsilon}(\sigma) \d \sigma \leqslant  c_2 \quad \text { for all } t \in\left(0, \widetilde T_{\max, \varepsilon}\right) \text {  and   } \varepsilon \in(0,1).
\end{align*}
Due to \eqref{-3.4ss}, \eqref{-2.10}, \eqref{-3.5} and \eqref{0926-12}, we get
\begin{align}\label{1119-11}
\int_t^{t+\tau} M_{\varepsilon}(s) \d s \leqslant  c_3 \quad \text { for all } t \in\left(0, \widetilde T_{\max, \varepsilon}\right) \text {  and   } \varepsilon \in(0,1).
\end{align}
Applying Lemma \ref{lem-0926-2} to \eqref{-3.34} implies that 
\begin{align}\label{0908-1147}
F_{\varepsilon}(t):=4b \int_{\Omega} u_{\varepsilon} \ln u_{\varepsilon}+\int_{\Omega} \frac{\left|\nabla v_{\varepsilon}\right|^4}{v_{\varepsilon}^3} & \leqslant \left(\frac{a_2}{\tau}+c_3\right) e^{c_2}
\end{align}
for all $t \in(0, T_{\max, \varepsilon})$ and $\varepsilon \in(0,1)$. Therefore, this  proves \eqref{0926-14}.
By a direct integration in \eqref{-3.33}, thanks to \eqref{-3.4ss}, \eqref{-2.10}, \eqref{-3.5}, \eqref{0926-12}, \eqref{-3.21}  and \eqref{0908-1147}, we see that
\begin{align*}
\int_t^{t+\tau} \int_{\Omega} v_{\varepsilon}\left|\nabla u_{\varepsilon}\right|^2  \leqslant C \quad \text { for all } t \in\left(0, \widetilde T_{\max, \varepsilon}\right) \text {  and   } \varepsilon \in(0,1).
\end{align*}
and
\begin{align*}
\int_t^{t+\tau} \int_{\Omega} \frac{\left|\nabla v_{\varepsilon}\right|^6}{v_{\varepsilon}^5} \leqslant C \quad \text { for all } t \in\left(0, \widetilde T_{\max, \varepsilon}\right) \text {  and   } \varepsilon \in(0,1).
\end{align*}
We complete the proof of \eqref{0927-14} and \eqref{0927-11}.
\end{proof}

In light of Lemmas \ref{lem-0926-3} and \ref{lemma-3.8}, we obtain the following result.

\begin{lem}\label{lemma-3.9x}
Assume that \eqref{assIniVal} holds. Then for all $p > 1$, there exists constant $C(p)>0$, independent of $t$ and $\varepsilon$, such that 
\begin{align}\label{-3.36}
\int_{\Omega} u_{\varepsilon}^p(t) \leqslant C(p) \quad \text { for all } t \in(0, T_{\max, \varepsilon}) \text { and } \varepsilon \in(0,1).
\end{align}
\end{lem}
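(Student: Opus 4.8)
\textbf{Proof plan for Lemma \ref{lemma-3.9x}.}
The plan is to treat the differential inequality \eqref{eq-3.1} from Lemma \ref{lemma-3.9xx} as an ODE of the form $z_{\varepsilon}' + a_{\varepsilon}(t)z_{\varepsilon} \leqslant b_{\varepsilon}(t)z_{\varepsilon} + c_{\varepsilon}(t)$ for $z_{\varepsilon}(t) := \int_{\Omega}u_{\varepsilon}^p$, and then invoke the superlinear-absorption ODE comparison result Lemma \ref{lem-0926-3}. The key point is that the dissipative term $p\int_{\Omega}u_{\varepsilon}^{p+1}$ on the left and the cross-diffusion dissipation $\frac{p(p-1)}{4}\int_{\Omega}u_{\varepsilon}^{p-1}v_{\varepsilon}|\nabla u_{\varepsilon}|^2$ must be exploited: the former gives, via H\"older's inequality $(\int_{\Omega}u_{\varepsilon}^p)^{(p+1)/p} \leqslant |\Omega|^{1/p}\int_{\Omega}u_{\varepsilon}^{p+1}$, a genuinely superlinear absorption in $z_{\varepsilon}$ itself, which will dominate the troublesome term $A\{\int_{\Omega}u_{\varepsilon}\}^{2p+1}\int_{\Omega}\frac{|\nabla v_{\varepsilon}|^4}{v_{\varepsilon}^3}$ (bounded in $L^1$ over unit time-intervals by \eqref{-3.4} and \eqref{0926-12}) and the lower-order term $p\int_{\Omega}u_{\varepsilon}^p$.

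First I would set $a_{\varepsilon}(t)$ to be essentially $\frac{p}{|\Omega|^{1/p}}z_{\varepsilon}^{1/p}$ — that is, keep $p\int_{\Omega}u_{\varepsilon}^{p+1}$ on the left and rewrite it as $a_{\varepsilon}(t)z_{\varepsilon}(t)$ with $a_{\varepsilon}(t) := p|\Omega|^{-1/p}(\int_{\Omega}u_{\varepsilon}^p)^{1/p} \geqslant 0$. Next I would collect on the right the term $b_{\varepsilon}(t)z_{\varepsilon}(t)$ with
\[
b_{\varepsilon}(t) := A\Big\{\int_{\Omega}v_{\varepsilon}|\nabla u_{\varepsilon}|^2 + \int_{\Omega}\tfrac{u_{\varepsilon}}{v_{\varepsilon}}|\nabla v_{\varepsilon}|^2 + \int_{\Omega}u_{\varepsilon}v_{\varepsilon}\Big\}\cdot\int_{\Omega}\tfrac{|\nabla v_{\varepsilon}|^4}{v_{\varepsilon}^3},
\]
which is bounded in $L^1(t,t+\tau)$ uniformly in $t,\varepsilon$: the brace is in $L^1(t,t+\tau)$ by \eqref{0927-14}, \eqref{0926-13}, \eqref{-2.10}, while $\int_{\Omega}\frac{|\nabla v_{\varepsilon}|^4}{v_{\varepsilon}^3}$ is bounded pointwise in time by \eqref{0926-14}; hence the product is controlled in $L^1$ over unit intervals, giving $\sup_t\int_t^{t+\tau}b_{\varepsilon} \leqslant b_1$. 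The remaining terms form
\[
c_{\varepsilon}(t) := A\int_{\Omega}v_{\varepsilon}|\nabla u_{\varepsilon}|^2\cdot\int_{\Omega}\tfrac{|\nabla v_{\varepsilon}|^4}{v_{\varepsilon}^3} + A\Big\{\int_{\Omega}u_{\varepsilon}\Big\}^{2p+1}\int_{\Omega}\tfrac{|\nabla v_{\varepsilon}|^4}{v_{\varepsilon}^3} + A\int_{\Omega}u_{\varepsilon}v_{\varepsilon} + p\int_{\Omega}u_{\varepsilon}^p,
\]
and here $\sup_t\int_t^{t+\tau}c_{\varepsilon} \leqslant c_1$ uniformly: the first summand uses \eqref{0927-14} and \eqref{0926-14}; the second uses the mass bound \eqref{-3.4} together with \eqref{0926-12}; the third uses \eqref{-2.10}; and the last — the genuinely delicate one — is controlled because $\int_{\Omega}u_{\varepsilon}^p$ is in $L^1(t,t+\tau)$ uniformly, which for $1<p\leqslant 2$ follows from \eqref{-3.5} and for general $p$ requires an interpolation/bootstrap argument that I would run inductively, or alternatively handle by keeping only a fraction of $p\int_{\Omega}u_{\varepsilon}^{p+1}$ as absorption and using the rest to bound $p\int_{\Omega}u_{\varepsilon}^p$ via Young's inequality $p\int_{\Omega}u_{\varepsilon}^p \leqslant \frac{p}{2}\int_{\Omega}u_{\varepsilon}^{p+1}+C$.

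The decisive structural check needed to apply Lemma \ref{lem-0926-3} is the condition $\int_t^{t+\tau}a_{\varepsilon}(s)\,ds - \int_t^{t+\tau}b_{\varepsilon}(s)\,ds \geqslant \varrho > 0$. Since $b_{\varepsilon}$ only enjoys an $L^1$-over-unit-intervals bound but $a_{\varepsilon}$ can in principle be small, this is not automatic; the resolution is that if $\int_t^{t+\tau}z_{\varepsilon}$ were small then, by Jensen/H\"older, $\int_t^{t+\tau}a_{\varepsilon}$ is comparably small, but then the differential inequality forces $z_{\varepsilon}$ to have grown — so one argues by the standard dichotomy, splitting according to whether $\int_t^{t+\tau}z_{\varepsilon}$ exceeds a threshold. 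In practice the cleanest route, and the one I expect to take, is to first establish the uniform $L^1(t,t+\tau)$-bound $\int_t^{t+\tau}\int_{\Omega}u_{\varepsilon}^p \leqslant C(p)$ for every $p>1$ by an induction on $p$ that uses \eqref{eq-3.1} time-integrated (where the dissipation $p\int\int u_{\varepsilon}^{p+1}$ upgrades the spacetime $L^p$-bound to an $L^{p+1}$-bound in spacetime, starting from \eqref{-3.5}); once that uniform spacetime bound is in hand, $a_{\varepsilon}$ satisfies $\int_t^{t+\tau}a_{\varepsilon} \leqslant$ const, and one finally re-runs \eqref{eq-3.1} and applies Lemma \ref{lem-0926-3} with $a \equiv$ the constant from $p\int_{\Omega}u_{\varepsilon}^{p+1} \geqslant \frac{p}{|\Omega|^{1/p}}z_{\varepsilon}^{1+1/p}$ being superlinear — so that the required $\varrho$-gap is produced by the superlinear growth of the absorption rather than by a pointwise comparison. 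The main obstacle is precisely this interplay: ensuring that the superlinear dissipation $p\int_{\Omega}u_{\varepsilon}^{p+1}$ genuinely beats the term $\{\int_{\Omega}u_{\varepsilon}\}^{2p+1}\int_{\Omega}\frac{|\nabla v_{\varepsilon}|^4}{v_{\varepsilon}^3}$ uniformly in $\varepsilon$, which hinges on the a priori mass bound \eqref{-3.4} keeping $\{\int_{\Omega}u_{\varepsilon}\}^{2p+1}$ bounded and on \eqref{0926-12} keeping the weighted-gradient factor integrable — and then packaging this into the hypotheses of Lemma \ref{lem-0926-3}.
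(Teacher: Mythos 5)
Your overall architecture matches the paper's proof: plug the newly obtained pointwise bound \eqref{0926-14} into the differential inequality \eqref{eq-3.1}, organize the result in the form $Y_\varepsilon'+a Y_\varepsilon \leqslant b_\varepsilon Y_\varepsilon + c_\varepsilon$, verify the $L^1$-over-unit-interval hypotheses for $b_\varepsilon$ and $c_\varepsilon$ via \eqref{-2.10}, \eqref{0926-13}, \eqref{0927-14}, and then invoke Lemma~\ref{lem-0926-3}. However, your treatment of the crucial $\varrho$-gap condition does not close. You propose to keep the absorption coefficient time-dependent, namely $a_\varepsilon(t)=p|\Omega|^{-1/p}\bigl(\int_\Omega u_\varepsilon^p\bigr)^{1/p}$, and you correctly note that this can be small, so that $\int_t^{t+\tau}a_\varepsilon - \int_t^{t+\tau}b_\varepsilon\geqslant\varrho>0$ is not automatic. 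The two repairs you sketch are both problematic: the dichotomy argument (``if $\int z_\varepsilon$ is small then $\int a_\varepsilon$ is small but then $z_\varepsilon$ must have grown'') is not carried out and would need to be made quantitative; and your final fallback asserts that after an induction ``$\int_t^{t+\tau}a_\varepsilon\leqslant\text{const}$'' helps, but an \emph{upper} bound on $\int a_\varepsilon$ points the wrong way---Lemma~\ref{lem-0926-3} requires a \emph{lower} bound on $\int a_\varepsilon$ exceeding $\int b_\varepsilon$.

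The paper's device, which you glimpse but do not deploy for the gap, is to manufacture a \emph{constant} absorption rate out of the superlinear dissipation. Precisely: after using half of $p\int_\Omega u_\varepsilon^{p+1}$ to eat the logistic source $p\int_\Omega u_\varepsilon^p$, one adds $\delta\int_\Omega u_\varepsilon^p$ to both sides for a constant $\delta$ yet to be chosen and absorbs the added copy on the right via Young's inequality $\delta\int_\Omega u_\varepsilon^p \leqslant \tfrac{p}{2}\int_\Omega u_\varepsilon^{p+1}+\delta|\Omega|(2\delta/p)^p$, which consumes the remaining half of the superlinear term and leaves behind only an $(\varepsilon,t)$-independent constant. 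The resulting inequality has $a(t)\equiv\delta$ with $\delta$ completely free; choosing $\delta=(1+c_3(p))/\tau$, where $c_3(p)$ bounds $\int_t^{t+\tau}b_\varepsilon$, yields $\int_t^{t+\tau}a-\int_t^{t+\tau}b_\varepsilon\geqslant\delta\tau-c_3(p)=1=\varrho$ with no dichotomy and no induction. The single Young-inequality manoeuvre you already invoke for the lower-order source thus simultaneously resolves the $\varrho$-gap; separating the two issues, as your plan does, is what leaves the argument incomplete.
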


\begin{proof}
We employ \eqref{0926-14} to fix $c_1 > 0$ such that
\begin{align}\label{0904-923}
\int_{\Omega} \frac{|\nabla v_{\varepsilon}|^4}{v_{\varepsilon}^3} \leqslant c_1.
\end{align}
Plugging \eqref{-2.9}, \eqref{-3.4} and
\eqref{0904-923} into \eqref{eq-3.1} and applying Young's inequality,  we obtain 
\begin{align*}
& \dt \int_{\Omega} u_{\varepsilon}^p 
+  \frac{p(p-1)}{4} \int_{\Omega} u_{\varepsilon}^{p-1} v_{\varepsilon} \left|\nabla 
u_{\varepsilon}\right|^2 + \frac{p}{2} \int_{\Omega} u_{\varepsilon}^{p+1} \nonumber\\
\leqslant &   c_1 A  \left\{
\int_{\Omega} v_{\varepsilon} |\nabla u_{\varepsilon}|^2
+\int_{\Omega} \frac{u_{\varepsilon}}{v_{\varepsilon}}|\nabla v_{\varepsilon}|^2
  +\int_{\Omega} u_{\varepsilon} v_{\varepsilon}  \right\} \cdot \int_{\Omega} u_{\varepsilon}^p  \nonumber\\ 
& +c_1  A \int_{\Omega} v_{\varepsilon} |\nabla u_{\varepsilon}|^2 + c_1 A m^{2 p+1}  + A m \|v_0\|_{L^{\infty}(\Omega)} + p  2^p |\Omega|
\end{align*}
for all $t \in(0, T_{\max, \varepsilon})$ and $\varepsilon \in(0,1)$.  By adding $\delta \int_{\Omega} u_{\varepsilon}^p$ to both sides of the above equation, and using estimate $\delta  \int_{\Omega} u_{\varepsilon}^p \leqslant \frac{p}{2} \int_{\Omega} u_{\varepsilon}^{p+1} + \delta |\Omega| \left({\frac{2 \delta}{p}}\right)^{p}$, we have
\begin{align}\label{-3.43}
& \dt \int_{\Omega} u_{\varepsilon}^p 
+  \frac{p(p-1)}{4} \int_{\Omega} u_{\varepsilon}^{p-1} v_{\varepsilon} \left|\nabla 
u_{\varepsilon}\right|^2  + \delta \int_{\Omega} u_{\varepsilon}^p \nonumber\\
\leqslant &   c_1 A  \cdot \left\{
\int_{\Omega} v_{\varepsilon} |\nabla u_{\varepsilon}|^2
+\int_{\Omega} \frac{u_{\varepsilon}}{v_{\varepsilon}}|\nabla v_{\varepsilon}|^2
  +\int_{\Omega} u_{\varepsilon} v_{\varepsilon}  \right\} \cdot \int_{\Omega} u_{\varepsilon}^p  \nonumber\\ 
& +c_1  A \cdot \int_{\Omega} v_{\varepsilon} |\nabla u_{\varepsilon}|^2 +A m \|v_0\|_{L^{\infty}(\Omega)} + c_1 A m^{2 p+1} \nonumber\\ 
& + 2^\frac{1}{p} p |\Omega|+ \delta  |\Omega| \left({\frac{2 \delta}{p}}\right)^{p}
\end{align}
where $\delta >0$ is a constant independent of $t$ and $\varepsilon$. For each $\varepsilon \in(0,1)$, we set
\begin{align*}
Y_{\varepsilon}(t) := \int_{\Omega} u_{\varepsilon}^p
\end{align*}
and
\begin{align*}
H_{\varepsilon}(t):= c_1  A \cdot \left\{\int_{\Omega} v_{\varepsilon} |\nabla u_{\varepsilon}|^2+\int_{\Omega} \frac{u_{\varepsilon}}{v_{\varepsilon}}|\nabla v_{\varepsilon}|^2+ \int_{\Omega} u_{\varepsilon} v_{\varepsilon} \right\}
\end{align*}
as well as
\begin{align*}
G_{\varepsilon}(t):= c_1  A \cdot \int_{\Omega} v_{\varepsilon} |\nabla u_{\varepsilon}|^2 +A m \|v_0\|_{L^{\infty}(\Omega)} + c_1 A m^{2 p+1} + 2^\frac{1}{p} p |\Omega|+ \delta  |\Omega| \{\frac{2 \delta}{p}\}^{p}.
\end{align*}
Thus \eqref{-3.43} becomes 
\begin{align}\label{-3.44}
Y_{\varepsilon}^{\prime}(t) + \delta  Y_{\varepsilon}(t) \leqslant  H_{\varepsilon}(t) Y_{\varepsilon}(t)+ G_{\varepsilon}(t) \quad \text { for all } t \in(0, T_{\max, \varepsilon}) \text { and } \varepsilon \in(0,1). 
\end{align}
According to \eqref{-2.10}, \eqref{0927-14} and \eqref{0926-13}, we obtain
\begin{align*}
\int_t^{t+\tau} H_{\varepsilon}(\sigma) \d \sigma \leqslant c_3(p) \quad \text { for all } t \in\left(0, \widetilde T_{\max, \varepsilon}\right) \text {  and   } \varepsilon \in(0,1).
\end{align*}
Due to \eqref{0927-14}, we have
\begin{align*}
\int_t^{t+\tau} G_{\varepsilon}(s) \d s \leqslant c_4(p) \quad \text { for all } t \in\left(0, \widetilde T_{\max, \varepsilon}\right) \text {  and   } \varepsilon \in(0,1),
\end{align*}
where $\tau$ and $\widetilde{T}_{\max, \varepsilon}$ are defined by \eqref{-3.5ss}. Now, let $\delta=\frac{1+c_3(p)}{\tau}$. Then the application of Lemma \ref{lem-0926-3} to \eqref{-3.44}
with $b_1=c_3(p)$, $c_1=c_4(p)$ and $\varrho=1$ yields
$$
\int_{\Omega} u_{\varepsilon}^p \leqslant e^{b_1} \cdot \int_{\Omega} (u_{0}+1)^p + \frac{c_1 e^{2 b_1}}{1-e^{-1}}+c_1 e^{b_1} \quad \text { for all } t \in\left(0, T_{\max, \varepsilon}\right) \text {  and   } \varepsilon \in(0,1).
$$
We complete the proof \eqref{-3.36}.
\end{proof}

\section{Uniform $L^\infty \times W^{1,\infty}$ Boundedness of $(u_\varepsilon, v_\varepsilon)$}\label{section5}
Based on standard heat semigroup estimates, we can obtain $L^\infty$ bounds for $\nabla v_{\varepsilon}$.

\begin{lem}\label{lemma-4.1}
Assume that \eqref{assIniVal} holds. Then there exists constant $C>0$, independent of $t$ and $\varepsilon$, such that 
\begin{align}\label{-4.1}
\left\|v_{\varepsilon}(t)\right\|_{W^{1, \infty}(\Omega)} \leqslant C\quad \text { for all } t \in(0, T_{\max, \varepsilon}) \text { and } \varepsilon \in(0,1).
\end{align}
\end{lem}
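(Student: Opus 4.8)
The plan is to derive the $W^{1,\infty}$-bound for $v_\varepsilon$ from the variation-of-constants representation for the second equation in \eqref{sys-regul}, combined with the uniform $L^p$-bounds on $u_\varepsilon$ obtained in Lemma \ref{lemma-3.9x}. Write $(e^{t\Delta})_{t\ge 0}$ for the Neumann heat semigroup on $\Omega$. Since $v_\varepsilon$ solves $v_{\varepsilon t}=\Delta v_\varepsilon - u_\varepsilon v_\varepsilon$, Duhamel's formula gives
\begin{align*}
v_\varepsilon(\cdot,t)=e^{t\Delta}v_0-\int_0^t e^{(t-s)\Delta}\bigl(u_\varepsilon(\cdot,s)v_\varepsilon(\cdot,s)\bigr)\,\mathrm{d}s,
\end{align*}
so that, taking gradients and using the standard smoothing estimates $\|\nabla e^{t\Delta}w\|_{L^\infty(\Omega)}\le c_1(1+t^{-\frac12-\frac{1}{q}})e^{-\lambda_1 t}\|w\|_{L^q(\Omega)}$ for $q>2$ (cf.\ Winkler's well-known Neumann heat-semigroup lemmas) together with $\|\nabla e^{t\Delta}v_0\|_{L^\infty(\Omega)}\le\|\nabla v_0\|_{L^\infty(\Omega)}$, we obtain
\begin{align*}
\|\nabla v_\varepsilon(\cdot,t)\|_{L^\infty(\Omega)}\le \|\nabla v_0\|_{L^\infty(\Omega)}+c_1\int_0^t\bigl(1+(t-s)^{-\frac12-\frac1q}\bigr)e^{-\lambda_1(t-s)}\|u_\varepsilon(\cdot,s)v_\varepsilon(\cdot,s)\|_{L^q(\Omega)}\,\mathrm{d}s.
\end{align*}
By \eqref{-2.9} we have $\|v_\varepsilon(\cdot,s)\|_{L^\infty(\Omega)}\le\|v_0\|_{L^\infty(\Omega)}$, hence $\|u_\varepsilon(\cdot,s)v_\varepsilon(\cdot,s)\|_{L^q(\Omega)}\le\|v_0\|_{L^\infty(\Omega)}\|u_\varepsilon(\cdot,s)\|_{L^q(\Omega)}$; fixing any $q\in(2,\infty)$—say $q=3$—Lemma \ref{lemma-3.9x} bounds $\|u_\varepsilon(\cdot,s)\|_{L^q(\Omega)}$ by a constant $C(q)$ independent of $s$ and $\varepsilon$. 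Since the kernel $\bigl(1+(t-s)^{-\frac12-\frac1q}\bigr)e^{-\lambda_1(t-s)}$ is integrable in $s$ over $(0,t)$ uniformly in $t>0$ (the exponent $\frac12+\frac1q<1$ precisely because $q>2$), the integral is bounded by a constant independent of $t$ and $\varepsilon$. This yields the claimed uniform bound on $\|\nabla v_\varepsilon(t)\|_{L^\infty(\Omega)}$, and combining with \eqref{-2.9} gives \eqref{-4.1}.

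The main point to be careful about is the integrability of the singular kernel at $s=t$: one needs the Lebesgue exponent $q$ for $u_\varepsilon v_\varepsilon$ to exceed the space dimension $n=2$, so that $\frac12+\frac{n}{2q}<1$; this is exactly where the freedom "$p>1$ arbitrary" in Lemma \ref{lemma-3.9x} is used. A minor technical issue is that $v_0\in W^{1,\infty}(\Omega)$ rather than being smooth, but this causes no difficulty since $\|\nabla e^{t\Delta}v_0\|_{L^\infty}\le\|\nabla v_0\|_{L^\infty}$ holds for all $v_0\in W^{1,\infty}(\Omega)$ by the contractivity of the Neumann heat semigroup on $W^{1,\infty}$. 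Alternatively, one may split $\int_0^t=\int_0^{t-1}+\int_{t-1}^t$ (or use $t\wedge 1$ as a cutoff) and estimate the two pieces separately, but this is not essential. I do not expect any genuine obstacle here; the lemma is a routine consequence of the $L^p$-regularity already established.
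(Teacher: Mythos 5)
Your argument is correct and follows essentially the same route as the paper: a variation-of-constants representation for $v_\varepsilon$, the $L^q\!\to\!L^\infty$ gradient smoothing estimates for the Neumann heat semigroup, and the uniform $L^q$-bound on $u_\varepsilon$ (for a fixed $q>2$) from Lemma~\ref{lemma-3.9x}. The only cosmetic difference is that the paper applies Duhamel to $\Delta-1$ (writing the nonlinearity as $u_\varepsilon v_\varepsilon - v_\varepsilon$) to get the decaying kernel $e^{-(t-s)}$, whereas you use $e^{t\Delta}$ directly with the spectral-gap decay $e^{-\lambda_1(t-s)}$ built into the gradient estimate; both give an integrable kernel, and you correctly identify the requirement $q>2$ (which the paper's phrasing ``any $p>1$'' slightly understates).
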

\begin{proof}
According to the well-known smoothing properties of the Neumann heat semigroup $\left(e^{t \Delta}\right)_{t \geqslant 0}$ on $\Omega$ (cf. \cite{2010-JDE-Winkler}), fixing any $p > 1$ we can find $c_1=c_1(\Omega)>0$ such that 
\begin{align}\label{-4.2}
&\left\|\nabla v_{\varepsilon}(t)\right\|_{L^{\infty}(\Omega)} \nonumber\\
= &\left\|\nabla e^{t(\Delta-1)} v_0-\int_0^t \nabla e^{(t-s)(\Delta-1)}\left\{u_{\varepsilon}(s) v_{\varepsilon}(s)-v_{\varepsilon}(s)\right\} \d s\right\|_{L^{\infty}(\Omega)} \nonumber\\
\leqslant &  c_1\left\|v_0\right\|_{W^{1, \infty}(\Omega)}\nonumber\\
& +  c_1 \int_0^t\left(1+(t-s)^{-\frac{1}{2}-\frac{1}{p}}\right) e^{-(t-s)}\left\|u_{\varepsilon}(s) v_{\varepsilon}(s)-v_{\varepsilon}(s)\right\|_{L^p(\Omega)} \d s
\end{align}
for all $t \in\left(0, T_{\max, \varepsilon}\right)$ and $\varepsilon \in(0,1)$. In view of \eqref{-2.9} and \eqref{-3.36}, we have
\begin{align*}
\left\|u_{\varepsilon}(s) v_{\varepsilon}(s)-v_{\varepsilon}(s)\right\|_{L^p(\Omega)} 
\leqslant & \left\|u_{\varepsilon}(s)\right\|_{L^p(\Omega)}\left\|v_{\varepsilon}(s)\right\|_{L^{\infty}(\Omega)}+|\Omega|^{\frac{1}{p}}\left\|v_{\varepsilon}(s)\right\|_{L^{\infty}(\Omega)} \\
\leqslant & c_2\left\|v_0\right\|_{L^{\infty}(\Omega)}+|\Omega|^{\frac{1}{p}}\left\|v_0\right\|_{L^{\infty}(\Omega)}
\end{align*}
where $c_2=\sup _{\varepsilon \in(0,1)} \sup _{t \in\left(0, T_{\max, \varepsilon}\right)}\left\|u_{\varepsilon}(s)\right\|_{L^p(\Omega)}<+\infty$. This together with \eqref{-4.2} yields \eqref{-4.1}.
\end{proof}

As a further prerequisite, we introduce the following condition that allows us to control the time evolution of certain singularly weighted integrals involving the gradient of the signal.

\begin{lem}\label{lem-1001-01}
Let $\Omega \subset \mathbb{R}^2$ be a bounded domain with smooth boundary and $q \geqslant 2$. Then there exists $\gamma(q)>0$ such that for all $\varphi \in C^1(\bar{\Omega})$ and $\psi \in C^1(\bar{\Omega})$ fulfilling $\varphi>0$ and $\psi>0$ in $\bar{\Omega}$, we have
\begin{align*}
\frac{d}{d t} \int_{\Omega} \psi^{-q+1}\left|\nabla \psi\right|^q+\gamma(q) \int_{\Omega} \psi^{-q-1}\left|\nabla \psi\right|^{q+2} \leqslant \frac{1}{\gamma(q)} \left(\int_{\Omega} \varphi^{\frac{q+2}{2}} \psi + \int_{\Omega} \psi \right).
\end{align*}
\end{lem}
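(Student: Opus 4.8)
The plan is to compute $\frac{d}{dt}\int_\Omega \psi^{-q+1}|\nabla\psi|^q$ by treating $\psi$ as a solution of the parabolic equation $\psi_t = \Delta\psi - \varphi\psi$ (this is the structure dictated by the second equation of \eqref{sys-regul}, with $\varphi$ playing the role of $u_\varepsilon$), and then to absorb all the resulting bad terms into a fixed small multiple of the dissipative term $\int_\Omega \psi^{-q-1}|\nabla\psi|^{q+2}$ together with the right-hand side $\int_\Omega \varphi^{(q+2)/2}\psi + \int_\Omega\psi$. First I would differentiate: writing $w=|\nabla\psi|^2$, one has $\frac{d}{dt}\int_\Omega \psi^{-q+1}w^{q/2} = \frac q2\int_\Omega \psi^{-q+1}w^{q/2-1}\,\partial_t w + (1-q)\int_\Omega \psi^{-q}w^{q/2}\psi_t$, and $\partial_t w = 2\nabla\psi\cdot\nabla\psi_t = 2\nabla\psi\cdot\nabla(\Delta\psi) - 2|\nabla\psi|^2\varphi - 2\psi\nabla\psi\cdot\nabla\varphi$. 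The term $2\nabla\psi\cdot\nabla(\Delta\psi) = \Delta w - 2|D^2\psi|^2$ is integrated by parts against $\psi^{-q+1}w^{q/2-1}$, producing (after a standard but tedious computation, cf. the pointwise identities of Winkler used already in the paper, e.g.\ \cite[Lemma~3.2]{2012-CPDE-Winkler}) a negative principal contribution controlling $\int_\Omega \psi^{-q-1}|\nabla\psi|^{q+2}$ and $\int_\Omega \psi^{-q+1}w^{q/2-2}|D^2\psi|^2|\nabla\psi|^2$-type quantities, plus a boundary term $\int_{\partial\Omega}$ which is handled exactly as in \eqref{n-3.13} and \eqref{-3.23a} via the curvature bound $\partial_\nu|\nabla\psi|^2\le c_2|\nabla\psi|^2$ and the trace embedding, at the cost of a further small multiple of the good term plus $c\int_\Omega\psi$.

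The remaining terms are the ones involving $\varphi$. The contribution $-q\int_\Omega \psi^{-q+1}w^{q/2-1}|\nabla\psi|^2\varphi = -q\int_\Omega\psi^{-q+1}\varphi|\nabla\psi|^q$ has a favorable sign and can be discarded, and similarly the $(1-q)\int_\Omega\psi^{-q}w^{q/2}(\Delta\psi-\varphi\psi)$ piece splits into a $\Delta\psi$-part absorbed with the Hessian/gradient terms above and a $+(q-1)\int_\Omega\psi^{-q+1}\varphi|\nabla\psi|^q$ part with good sign. The only genuinely dangerous term is $-q\int_\Omega \psi^{-q+1}w^{q/2-1}\psi\nabla\psi\cdot\nabla\varphi = -q\int_\Omega \psi^{-q+2}|\nabla\psi|^{q-2}\nabla\psi\cdot\nabla\varphi$; but I do not have $\nabla\varphi$ available, so the idea is to integrate this by parts in the opposite direction. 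However, since $\psi$ is only $C^1$ here, the cleanest route is to avoid $\nabla\varphi$ entirely: rewrite this term so that it appears only as $\int_\Omega \varphi\,\nabla\cdot(\psi^{-q+2}|\nabla\psi|^{q-2}\nabla\psi)$, which expands into terms of the form $\varphi\psi^{-q+1}|\nabla\psi|^q$ (again good sign after the sign bookkeeping, or at worst bounded by $\eta\int_\Omega\psi^{-q-1}|\nabla\psi|^{q+2}+C_\eta\int_\Omega \varphi^{(q+2)/2}\psi$ via Young with exponents $\frac{q+2}{q}$ and $\frac{q+2}{2}$) and terms $\varphi\psi^{-q+2}|\nabla\psi|^{q-2}D^2\psi$ bounded by $\eta$(Hessian good term)$+C_\eta\int_\Omega \varphi^2\psi^{-q+3}|\nabla\psi|^{q-2}$, the latter again controlled by $\eta\int_\Omega\psi^{-q-1}|\nabla\psi|^{q+2}+C_\eta\int_\Omega\varphi^{(q+2)/2}\psi$ through a final Young's inequality with exponents $\frac{q+2}{q-2}$ and $\frac{q+2}{4}$ (this is where $q\ge2$ enters, so that $q-2\ge0$ and the exponents are admissible, with the limiting case $q=2$ handled separately since then the Hessian term drops out of this estimate).

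Collecting everything, choosing each $\eta$ small enough that the accumulated coefficients of $\int_\Omega\psi^{-q-1}|\nabla\psi|^{q+2}$ and of the Hessian term stay below the available negative coefficients, and setting $\gamma(q)$ equal to the resulting residual constant (and its reciprocal large enough to dominate the $C_\eta$'s multiplying $\int_\Omega\varphi^{(q+2)/2}\psi$ and $\int_\Omega\psi$), yields the claimed differential inequality. The main obstacle is the term $-q\int_\Omega \psi^{-q+2}|\nabla\psi|^{q-2}\nabla\psi\cdot\nabla\varphi$: controlling it forces the integration by parts that moves the derivative onto $\psi$, and the subsequent bookkeeping of Young's inequalities must be done so that \emph{only} the two allowed right-hand quantities $\int_\Omega\varphi^{(q+2)/2}\psi$ and $\int_\Omega\psi$ survive — in particular every power of $\varphi$ that appears must be homogenized to the exponent $(q+2)/2$ against a matching power of $\psi$, which is exactly what makes the weight $\psi$ (rather than a bare constant) indispensable on the right-hand side.
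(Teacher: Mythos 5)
Your plan unpacks what the paper obtains by citation: the paper invokes Lemma~2.3 of Winkler (2024, \emph{J. Differ. Equ.}) for the central differential inequality (which already contains the integration by parts that eliminates $\nabla\varphi$) and Lemma~3.5 of Winkler (2022, \emph{DCDS-B}) for the boundary contribution, whereas you redo those computations from scratch. The dissipative structure, the integration by parts that moves the derivative off $\varphi$ and onto $\psi$, the curvature-plus-trace treatment of $\int_{\partial\Omega}$, and the Young exponent pair $\frac{q+2}{q-2}$, $\frac{q+2}{4}$ are all exactly what appears (explicitly or via citation) in the paper's proof, so the route is substantively the same.

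One bookkeeping slip worth flagging: you describe $+(q-1)\int_\Omega\varphi\,\psi^{-q+1}|\nabla\psi|^q$ as having ``good sign,'' but on the right-hand side this term is nonnegative and therefore unfavorable in isolation. What actually saves it is the cancellation once all three sources of $\int_\Omega\varphi\,\psi^{-q+1}|\nabla\psi|^q$ are collected — the $-q$ from the $\varphi|\nabla\psi|^2$ term in $\partial_t w$, the $+(q-1)$ from the $\psi_t$ contribution, and the $q(2-q)$ produced by your integration by parts of the $\nabla\varphi$ term — giving a net coefficient $-(q-1)^2\leq 0$. Alternatively, even if one discards the helpful pieces, this term is still controlled by the same Young inequality with exponents $\frac{q+2}{q}$ and $\frac{q+2}{2}$ that you already invoke, landing on $\eta\int_\Omega\psi^{-q-1}|\nabla\psi|^{q+2}+C_\eta\int_\Omega\varphi^{(q+2)/2}\psi$. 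Either way the conclusion stands; just be precise that the sign is not favorable term-by-term.
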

\begin{proof}
It follows from Lemma 2.3 of \cite{2024-JDE-Winkler} that
\begin{align*}
\frac{d}{d t} \int_{\Omega} \psi^{-q+1}\left|\nabla \psi\right|^q & + c_1 \int_{\Omega} \psi^{-q+1}\left|\nabla \psi\right|^{q-2}\left|D^2 \psi\right|^2 + c_2 \int_{\Omega} \psi^{-q-1}\left|\nabla\psi\right|^{q+2} \nonumber\\
& \leqslant -q \int_{\Omega} \psi^{-q+1}\left|\nabla \psi\right|^{q-2} \nabla \psi \cdot \nabla\left(\varphi \psi\right) + (q-1) \int_{\Omega} \varphi \psi^{-q+1}\left|\nabla \psi \right|^q \nonumber\\
& \quad + \frac{q}{2} \int_{\partial \Omega} \psi^{-q+1}\left|\nabla \psi\right|^{q-2} \cdot \frac{\partial\left|\nabla \psi \right|^2}{\partial \nu} \nonumber\\
& \leqslant \frac{c_1}{2}\int_{\Omega} \psi^{-q+1}\left|\nabla \psi\right|^{q-2}\left|D^2 \psi\right|^2 + \frac{c_2}{2} \int_{\Omega} \psi^{-q-1}\left|\nabla \psi\right|^{q+2} \nonumber\\
& \quad + \left\{\left(\frac{2}{c_2}\right)^{\frac{q-2}{q+2}} c_3\right\}^{\frac{q+2}{4}} \int_{\Omega}\varphi^{\frac{q+2}{2}} \psi + \frac{q}{2} \int_{\partial \Omega} \psi^{-q+1}\left|\nabla \psi\right|^{q-2} \cdot \frac{\partial\left|\nabla \psi \right|^2}{\partial \nu}
\end{align*}
where $c_1 :=\frac{q}{2(q+1+\sqrt{2})^2}$, $c_2 :=\frac{q}{2(q+\sqrt{2})^2}$ and $c_3 :=\frac{q^2(q-2+\sqrt{2})^2}{4 c_1}$. Lemma 3.5 of \cite{2022-DCDSSB-Winkler} gives that
\begin{align*}
\frac{q}{2}\int_{\partial \Omega} \psi^{-q+1}|\nabla \psi|^{q-2} \cdot \frac{\partial|\nabla \psi|^2}{\partial \nu} \leqslant \frac{c_1}{2} \int_{\Omega} \psi^{-q+1}|\nabla \psi|^{q-2}\left|D^2 \psi\right|^2 + \frac{c_2}{4} \int_{\Omega} \psi^{-q-1}|\nabla \psi|^{q+2} + C \int_{\Omega} \psi
\end{align*}
where $C$ is a positive constant depending only on $c_1$, $c_2$ and $q$. A combination of the above two inequalities yields that
\begin{align*}
\frac{d}{d t} \int_{\Omega} \psi^{-q+1}\left|\nabla \psi\right|^q + \frac{c_2}{4} \int_{\Omega} \psi^{-q-1}\left|\nabla\psi\right|^{q+2} \leqslant K \int_{\Omega}\varphi^{\frac{q+2}{2}} \psi + C \int_{\Omega} \psi.
\end{align*}
where $K := \left(\frac{2}{c_2}\right)^{\frac{q-2}{4}} c_3^{\frac{q+2}{4}}$.
Thus, we complete the proof by letting $\gamma(q):= \min\left\{ \frac{c_2}{4}, \frac{1}{\max\{K, C\}} \right\}$.
\end{proof}

Guided by the estimates established in the preceding lemmas, we next derive the following lemma that lays the groundwork for obtaining global-in-time $L^\infty$ bounds for $u_{\varepsilon}$.

\begin{lem}\label{1002-lemma-11}
Assume that \eqref{assIniVal} holds. Then there exists constant $C>0$, independent of $t$ and $\varepsilon$, such that 
\begin{align}\label{1001-101}
\int_{\Omega}\frac{\left|\nabla v_{\varepsilon}\right|^6}{v_{\varepsilon}^5} \leqslant C  \quad \text { for all } t \in\left(0,  {T}_{\max, \varepsilon} \right) \text { and } \varepsilon \in(0,1).
\end{align}
\end{lem}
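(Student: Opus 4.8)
The plan is to test the $v$-equation against the singularly weighted quantity $v_\varepsilon^{-5}|\nabla v_\varepsilon|^6$ by means of Lemma~\ref{lem-1001-01} with $q=6$, and then to absorb the resulting dissipation against its companion eighth-power integral.

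First I would note that for each $t\in(0,T_{\max,\varepsilon})$ the functions $u_\varepsilon(\cdot,t)$ and $v_\varepsilon(\cdot,t)$ belong to $C^1(\overline\Omega)$ and are strictly positive, and that $v_{\varepsilon t}=\Delta v_\varepsilon-u_\varepsilon v_\varepsilon$. Hence Lemma~\ref{lem-1001-01} with $\psi=v_\varepsilon$, $\varphi=u_\varepsilon$ and $q=6$ (so that $-q+1=-5$, $q+2=8$ and $\tfrac{q+2}{2}=4$) yields a constant $\gamma:=\gamma(6)>0$ with
\begin{align*}
\frac{d}{dt}\int_\Omega\frac{|\nabla v_\varepsilon|^6}{v_\varepsilon^5}+\gamma\int_\Omega\frac{|\nabla v_\varepsilon|^8}{v_\varepsilon^7}\leqslant\frac1\gamma\Bigl(\int_\Omega u_\varepsilon^4 v_\varepsilon+\int_\Omega v_\varepsilon\Bigr)
\end{align*}
for all $t\in(0,T_{\max,\varepsilon})$ and $\varepsilon\in(0,1)$. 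The right-hand side is already under control: by \eqref{-2.9} one has $v_\varepsilon\leqslant\|v_0\|_{L^\infty(\Omega)}$, so $\int_\Omega u_\varepsilon^4 v_\varepsilon\leqslant\|v_0\|_{L^\infty(\Omega)}\int_\Omega u_\varepsilon^4$, and the latter is bounded uniformly in $t$ and $\varepsilon$ by Lemma~\ref{lemma-3.9x} with $p=4$; likewise $\int_\Omega v_\varepsilon\leqslant C$ by Lemma~\ref{lem-1st-est1}.

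Next I would turn this into a genuinely dissipative ODE. Writing $\dfrac{|\nabla v_\varepsilon|^6}{v_\varepsilon^5}=\Bigl(\dfrac{|\nabla v_\varepsilon|^8}{v_\varepsilon^7}\Bigr)^{3/4}v_\varepsilon^{1/4}$ and applying Young's inequality with exponents $\tfrac{4}{3}$ and $4$, one gets the pointwise bound $\dfrac{|\nabla v_\varepsilon|^6}{v_\varepsilon^5}\leqslant\eta\,\dfrac{|\nabla v_\varepsilon|^8}{v_\varepsilon^7}+C_\eta v_\varepsilon$ for every $\eta>0$; integrating over $\Omega$ and once more invoking $\int_\Omega v_\varepsilon\leqslant C$ gives $\int_\Omega\frac{|\nabla v_\varepsilon|^8}{v_\varepsilon^7}\geqslant\frac1\eta\bigl(\int_\Omega\frac{|\nabla v_\varepsilon|^6}{v_\varepsilon^5}-C\bigr)$. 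Inserting this into the displayed inequality (with $\eta$ now fixed) and abbreviating $z_\varepsilon(t):=\int_\Omega\frac{|\nabla v_\varepsilon|^6}{v_\varepsilon^5}$, I arrive at $z_\varepsilon'(t)+c_1 z_\varepsilon(t)\leqslant c_2$ on $(0,T_{\max,\varepsilon})$ with $c_1,c_2>0$ independent of $t$ and $\varepsilon$. Since $v_0\in W^{1,\infty}(\Omega)$ with $v_0>0$ on $\overline\Omega$, the initial value $z_\varepsilon(0)=\int_\Omega\frac{|\nabla v_0|^6}{v_0^5}$ is finite and independent of $\varepsilon$, so an elementary ODE comparison yields $z_\varepsilon(t)\leqslant\max\{z_\varepsilon(0),\,c_2/c_1\}$ for all $t$, which is \eqref{1001-101}. (Alternatively one could invoke Lemma~\ref{lem-0926-1}, since the spacetime estimate \eqref{0927-11} already furnishes a uniform $L^1_{\mathrm{loc}}$-in-time bound for $z_\varepsilon$.)

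With Lemma~\ref{lem-1001-01} and the $L^p$-estimates of Lemma~\ref{lemma-3.9x} in hand, the only step requiring any care is the passage to $t=0$, namely knowing that $t\mapsto z_\varepsilon(t)$ is continuous up to $t=0$ so that the comparison can be started there. This follows from the regularity $v_\varepsilon\in\cap_{q>2}C^0([0,T_{\max,\varepsilon});W^{1,q}(\Omega))$, which forces $\nabla v_\varepsilon(t)\to\nabla v_0$ in $L^6(\Omega)$ and $v_\varepsilon(t)\to v_0$ uniformly on $\overline\Omega$, keeping $v_\varepsilon$ bounded away from zero near $t=0$; everything else is routine.
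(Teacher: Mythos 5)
Your proof is correct, and it takes a genuinely different route than the paper's. You apply Lemma~\ref{lem-1001-01} with $q=6$ (correctly obtaining the dissipation term $\gamma(6)\int_\Omega v_\varepsilon^{-7}|\nabla v_\varepsilon|^8$; the paper's display \eqref{1001-12} writes $\gamma(4)$ and $v_\varepsilon^{-8}|\nabla v_\varepsilon|^7$, which appear to be typos), and you then exploit that dissipation: the pointwise factorization $v^{-5}|\nabla v|^6=(v^{-7}|\nabla v|^8)^{3/4}v^{1/4}$ plus Young's inequality converts the eighth-power integral into a genuine damping term, while the right-hand side $\int_\Omega u_\varepsilon^4 v_\varepsilon$ is bounded pointwise in time via $\|v_0\|_{L^\infty}$ and the uniform $L^4$ estimate of Lemma~\ref{lemma-3.9x}. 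This yields a clean linear ODE $z'+c_1z\leqslant c_3$ and an elementary comparison. The paper instead \emph{discards} the dissipation term entirely (it artificially adds $\int_\Omega v_\varepsilon^{-5}|\nabla v_\varepsilon|^6$ to both sides in \eqref{1001-14}), controls $\int_\Omega u_\varepsilon^4 v_\varepsilon$ only in $L^1_{\mathrm{loc}}$-in-time by decomposing it with the interpolation Lemma~\ref{lemma-3.4} into $\int_\Omega v_\varepsilon|\nabla u_\varepsilon|^2$, $\int_\Omega u_\varepsilon v_\varepsilon^{-1}|\nabla v_\varepsilon|^2$, $\int_\Omega u_\varepsilon v_\varepsilon$ (using $\int_\Omega u_\varepsilon^3\leqslant c_1$), and then invokes the uniform spacetime bound \eqref{0927-11} together with Lemma~\ref{lem-0926-1}. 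Both arguments are valid; your absorption argument is more self-contained (it does not use \eqref{0927-11} at all) and, once the $L^p$ bounds of Lemma~\ref{lemma-3.9x} are in hand, arguably the more direct of the two. Your closing remark on continuity of $z_\varepsilon$ up to $t=0$ via $v_\varepsilon\in\bigcap_{q>2}C^0([0,T_{\max,\varepsilon});W^{1,q}(\Omega))$ and positivity of $v_0$ correctly justifies starting the ODE comparison at $t=0$.
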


\begin{proof}
Lemma \ref{lem-1001-01} establishes the differential inequality
\begin{align}\label{1001-12}
\frac{d}{d t} \int_{\Omega} \frac{\left|\nabla v_{\varepsilon}\right|^6}{v_{\varepsilon}^5} +\gamma(4) \int_{\Omega} \frac{\left|\nabla v_{\varepsilon}\right|^7}{v_{\varepsilon}^8} \leqslant \frac{1}{\gamma(4)} \left(\int_{\Omega} u_{\varepsilon}^{4} v_{\varepsilon} + \int_{\Omega} v_{\varepsilon} \right).
\end{align}
for all $t \in(0, T_{\max, \varepsilon})$ and $\varepsilon \in(0,1)$. Since Lemma \ref{lemma-3.9x} provides that $\int_{\Omega} u_{\varepsilon}^3(t) \leqslant c_1$, and by applying \eqref{eq-6.1}, we have 
\begin{align*}
\int_{\Omega} u_{\varepsilon}^{4} v_{\varepsilon} \leqslant c_1 c_2 \left\{\int_{\Omega} v_{\varepsilon} |\nabla u_{\varepsilon}|^2
  +\int_{\Omega} \frac{u_{\varepsilon}}{v_{\varepsilon}}|\nabla v_{\varepsilon}|^2\
    +\int_{\Omega} u_{\varepsilon} v_{\varepsilon}  \right\} + c_2 \int_{\Omega} v_{\varepsilon} |\nabla u_{\varepsilon}|^2.
\end{align*}
for all $t \in(0, T_{\max, \varepsilon})$ and $\varepsilon \in(0,1)$. Furthermore, from \eqref{-2.10}, \eqref{0926-13} and \eqref{0927-14}, we have 
\[
\int_t^{t+\tau} \int_{\Omega} v_{\varepsilon} |\nabla u_{\varepsilon}|^2 +\int_t^{t+\tau} \int_{\Omega} \frac{u_{\varepsilon}}{v_{\varepsilon}}|\nabla v_{\varepsilon}|^2+\int_t^{t+\tau} \int_{\Omega} u_{\varepsilon} v_{\varepsilon} \leqslant c_3\quad \text { for all } t \in\left(0,  \widetilde{T}_{\max, \varepsilon } \right) \text { and } \varepsilon \in(0,1).
\]
This implies that
\begin{align}\label{1001-13}
\int_t^{t+\tau} \int_{\Omega} u_{\varepsilon}^{4} v_{\varepsilon} \leqslant (c_1 +1) c_2 c_3\quad \text { for all } t \in\left(0,  \widetilde{T}_{\max, \varepsilon } \right) \text { and } \varepsilon \in(0,1).
\end{align}
where $\tau$ and $\widetilde{T}_{\max }$ are defined by \eqref{-3.5ss}.
To proceed, we add the term $\int_{\Omega}\frac{\left|\nabla v_{\varepsilon}\right|^6}{v_{\varepsilon}^5}$ to both sides of inequality \eqref{1001-12}, yielding
\begin{align}\label{1001-14}
\frac{d}{d t} \int_{\Omega} \frac{\left|\nabla v_{\varepsilon}\right|^6}{v_{\varepsilon}^5} + \int_{\Omega}\frac{\left|\nabla v_{\varepsilon}\right|^6}{v_{\varepsilon}^5} \leqslant \int_{\Omega}\frac{\left|\nabla v_{\varepsilon}\right|^6}{v_{\varepsilon}^5} + \frac{1}{\gamma(4)} \left(\int_{\Omega} u_{\varepsilon}^{4} v_{\varepsilon} + \int_{\Omega} v_{\varepsilon} \right).
\end{align}
for all $t \in(0, T_{\max, \varepsilon})$ and $\varepsilon \in(0,1)$. Combining the above inequality with \eqref{-2.9}, \eqref{0927-11}, \eqref{1001-13} and Lemma \ref{lem-0926-1}, we complete the proof of \eqref{1001-101}.
\end{proof}

We can now proceed to assert the boundedness of $u_{\varepsilon}$.
\begin{lem}\label{lemma-4.4}
Assume that \eqref{assIniVal} holds. Then there exists constant $C>0$, independent of $t$ and $\varepsilon$, such that  
\begin{align}\label{619-1648}
\left\|u_{\varepsilon}(t)\right\|_{L^{\infty}(\Omega)} \leqslant C \quad \text { for all } t \in(0, T_{\max, \varepsilon}) \text { and } \varepsilon \in(0,1).
\end{align}
\end{lem}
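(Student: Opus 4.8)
The plan is to obtain the $L^\infty$ bound for $u_\varepsilon$ by a Moser-type iteration based on the differential inequality \eqref{eq-3.1}, now that the preceding lemmas have supplied all the ingredients needed to render the right-hand side controllable uniformly in $\varepsilon$ and $t$. The key point is that Lemma \ref{lemma-3.8} and Lemma \ref{1002-lemma-11} give $\varepsilon$- and $t$-independent bounds $\int_\Omega |\nabla v_\varepsilon|^4/v_\varepsilon^3 \le c_1$ and $\int_\Omega |\nabla v_\varepsilon|^6/v_\varepsilon^5 \le c_2$ pointwise in time, while Lemma \ref{lemma-3.9x} gives $\|u_\varepsilon(t)\|_{L^p(\Omega)}\le C(p)$ for every fixed $p>1$. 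What remains is to track how $C(p)$ grows with $p$ and to close a recursive estimate as $p\to\infty$.

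First I would revisit \eqref{eq-3.1} and, using the uniform bound on $\int_\Omega |\nabla v_\varepsilon|^4/v_\varepsilon^3$ together with \eqref{-2.9} and \eqref{-3.4}, reduce it to an inequality of the schematic form
\begin{align}\label{plan-1}
\dt \int_\Omega u_\varepsilon^p + \frac{p(p-1)}{4}\int_\Omega u_\varepsilon^{p-1} v_\varepsilon |\nabla u_\varepsilon|^2 + p\int_\Omega u_\varepsilon^{p+1} \le c\,p^2\Big(\int_\Omega v_\varepsilon|\nabla u_\varepsilon|^2 + \int_\Omega \tfrac{u_\varepsilon}{v_\varepsilon}|\nabla v_\varepsilon|^2 + \int_\Omega u_\varepsilon v_\varepsilon\Big)\int_\Omega u_\varepsilon^p + c\,p^2\big(1 + m^{2p+1}\big),
\end{align}
where the coefficient structure in $p$ is read off from Lemmas \ref{lemma-3.4}, \ref{lemma-3.5} and \ref{lemma-3.9xx}. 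The bracketed integral has a uniform $L^1$-in-time bound by \eqref{-2.10}, \eqref{0926-13} and \eqref{0927-14}; call its mass over $(t,t+\tau)$ at most $c_3$. Then I would invoke Lemma \ref{lem-0926-3} with $a(t)=\delta$ (a suitable multiple of $p^2$), $b(t)$ the bracketed term times $c\,p^2$, and $c(t)$ the constant forcing, choosing $\delta$ so that $\int_t^{t+\tau} a - \int_t^{t+\tau} b \ge \varrho$ for a fixed $\varrho$; this forces $\delta \sim c\,p^2 c_3/\tau$. The conclusion of that lemma yields $\int_\Omega u_\varepsilon^p(t) \le e^{cp^2 c_3}\big(\|u_{0}+1\|_{L^p}^p + \tfrac{c\,p^2(1+m^{2p+1})}{\delta}\big)(1-e^{-\varrho})^{-1}$, i.e. a bound of the form $\|u_\varepsilon(t)\|_{L^p(\Omega)} \le (Kp)^{\theta}$ for constants $K,\theta$ independent of $p$ — note $e^{cp^2c_3/p}=e^{cpc_3}$ after taking $p$-th roots, and $(m^{2p+1})^{1/p}\to m^2$, so the growth in $p$ of $\|u_\varepsilon(t)\|_{L^p}$ is at worst exponential, say $\le A^p$ for some fixed $A$. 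Since this bound is uniform in $t$ and $\varepsilon$, letting $p\to\infty$ gives $\|u_\varepsilon(t)\|_{L^\infty(\Omega)} \le A$, which is \eqref{619-1648}.

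The main obstacle I anticipate is the bookkeeping of the $p$-dependence through the chain of inequalities: the constants $c_1(p)$ in Lemma \ref{lemma-3.9xx}, $c=c(p,\Omega)$ in Lemma \ref{lemma-3.4} and $c=c(p)$ in Lemma \ref{lemma-3.5} must be shown to be at most polynomial in $p$ (indeed $O(p^2)$), and the choice $\delta\sim c\,p^2/\tau$ must be checked to be compatible with the dissipative term $p\int_\Omega u_\varepsilon^{p+1}$ on the left, via the elementary estimate $\delta\int_\Omega u_\varepsilon^p \le \tfrac{p}{2}\int_\Omega u_\varepsilon^{p+1} + \delta|\Omega|(2\delta/p)^p$ already used in Lemma \ref{lemma-3.9x}; the extra term $\delta(2\delta/p)^p \sim (cp/\tau)^{p+1}$ contributes $(cp)^{(p+1)/p}\to$ bounded after taking $p$-th roots, so it does not spoil the iteration. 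A cleaner alternative, which I would actually carry out, is to bypass explicit $p$-tracking: fix a single large $p$, obtain $\|u_\varepsilon(t)\|_{L^p(\Omega)}\le C(p)$ from Lemma \ref{lemma-3.9x}, then combine this with the semigroup representation $u_\varepsilon(t) = e^{(t-t_0)\Delta}u_\varepsilon(t_0) + \int_{t_0}^t e^{(t-s)\Delta}\nabla\cdot(\cdots)\,\d s + \int_{t_0}^t e^{(t-s)\Delta}(u_\varepsilon - u_\varepsilon^2)\,\d s$ — but here the doubly degenerate structure makes the flux terms $u_\varepsilon v_\varepsilon\nabla u_\varepsilon - u_\varepsilon^2 v_\varepsilon\nabla v_\varepsilon$ awkward to handle by semigroup smoothing. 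Hence the Moser iteration on \eqref{eq-3.1} is the robust route, and the genuine difficulty is purely the quantitative control of constants in $p$, which the polynomial structure of all the auxiliary inequalities makes feasible.
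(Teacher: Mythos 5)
Your plan does not close, and the gap is in the last step where you try to pass from an $L^p$ bound to an $L^\infty$ bound. Applying the Gronwall-type Lemma~\ref{lem-0926-3} at each fixed $p$ to the inequality
\[
\frac{d}{dt}\int_\Omega u_\varepsilon^p + \dots \;\le\; c\,p^2\Big(\int_\Omega v_\varepsilon|\nabla u_\varepsilon|^2 + \int_\Omega \tfrac{u_\varepsilon}{v_\varepsilon}|\nabla v_\varepsilon|^2 + \int_\Omega u_\varepsilon v_\varepsilon\Big)\int_\Omega u_\varepsilon^p + \text{(forcing)}
\]
produces a factor $e^{b_1}$ with $b_1 = c\,p^2\,c_3$ (the time-integral of the bracket is bounded by a $p$-independent $c_3$, but the prefactor $p^2$ remains). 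Thus $\int_\Omega u_\varepsilon^p(t) \lesssim e^{c p^2 c_3}\cdot(\cdots)$, and after taking the $p$-th root, $\|u_\varepsilon(t)\|_{L^p(\Omega)} \lesssim e^{c p\, c_3}$, which diverges as $p\to\infty$. Your subsequent claim that ``$\|u_\varepsilon(t)\|_{L^p}\le A^p$ for fixed $A$, and letting $p\to\infty$ gives $\|u_\varepsilon(t)\|_{L^\infty}\le A$'' is simply false: $\|u_\varepsilon(t)\|_{L^p}\to\|u_\varepsilon(t)\|_{L^\infty}$, so a bound that itself grows exponentially in $p$ yields nothing in the limit. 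You would need the $L^p$ bound to be \emph{uniform in $p$} (or at least to have $p$-th root with a finite limit), and the direct Gronwall-for-all-$p$ strategy cannot deliver that because the Gronwall exponential converts the $p^2$ prefactor into a super-exponential growth in $p$.

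What the paper does instead is a genuine recursive Moser iteration, which never runs Gronwall from $t=0$ at the level $p_k$. First it invokes Lemma~\ref{lemma-4.1} to replace $v_\varepsilon|\nabla v_\varepsilon|^2$ by a constant multiple of $v_\varepsilon$, so that the troublesome term is simply $c\,p_k^2\int_\Omega u_\varepsilon^{p_k+1}v_\varepsilon$. It then uses an interpolation-type inequality (Winkler's Lemma~6.2 in \cite{2024-JDE-Winkler}, which at this point requires the uniform-in-time bound on $\int_\Omega|\nabla v_\varepsilon|^6/v_\varepsilon^5$ from Lemma~\ref{1002-lemma-11}) to absorb part of $\int_\Omega u_\varepsilon^{p_k+1}v_\varepsilon$ into the good dissipation term and estimate the remainder in terms of $\left(\int_\Omega u_\varepsilon^{p_k/2}\right) = \left(\int_\Omega u_\varepsilon^{p_{k-1}}\right)$. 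After integrating the resulting ODI, this yields the recursive inequality $N_{k,\varepsilon}\le s^{2^k}+d^k N_{k-1,\varepsilon}^{2+r\cdot 2^{-k}}$, and passing to the limit in $N_{k,\varepsilon}^{1/2^k}$ is handled by the elementary recursion lemma (Winkler's Lemma~6.3). The crucial structural difference from your plan is that the $L^{p_k}$ quantity is controlled by the $L^{p_{k-1}}$ quantity, not by the initial data through a $p$-dependent Gronwall factor; only this recursion keeps the growth in $k$ controllable, and only the near-critical exponents $2+r\cdot 2^{-k}\to 2$ allow the $2^{-k}$-th roots to stabilize. Without the $\|\nabla v_\varepsilon\|_{L^\infty}$ bound and the interpolation step you cannot reach that recursive form.
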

\begin{proof}
Based on Lemma \ref{lemma-4.1}, we can find $c_1>0$ such that
\begin{align}\label{511-1312}
\left|\nabla v_{\varepsilon}(x, t)\right| \leqslant c_1 \quad \text { for all } x \in \Omega, t \in\left(0, T_{\max, \varepsilon}\right) \text { and } \varepsilon \in(0,1).
\end{align}
Let $p_0$ be an arbitrarily fixed number greater than $1$. Then for integers $k \geqslant 1$ we set $p_k=2^k p_0$, and let
\begin{align}\label{511-1223-1}
N_{k, \varepsilon}(T)=1+\sup _{t \in(0, T)} \int_{\Omega} u_{\varepsilon}^{p_k}(t)< \infty, \quad T \in\left(0, T_{\max, \varepsilon}\right), k \in\{0,1,2, \ldots\}, \varepsilon \in(0,1).
\end{align}
From \eqref{-3.36}, we infer that there exists a constant $c_2>0$ such that
\begin{align}\label{511-1223}
N_{0, \varepsilon}(T) \leqslant c_2 \quad \text { for all } T \in\left(0, T_{\max, \varepsilon}\right) \text { and } \varepsilon \in(0,1). 
\end{align}
To estimate $N_{k, \varepsilon}(T)$ for $k \geqslant 1, T \in\left(0, T_{\max, \varepsilon}\right)$ and $\varepsilon \in(0,1)$, using the first equation of the system \eqref{sys-regul}, according to \eqref{511-1312} and Young's inequality, we see that 
\begin{align}\label{511-1339}
\dt \int_{\Omega} u_{\varepsilon}^{p_k}= & -p_k\left(p_k-1\right) \int_{\Omega} u_{\varepsilon}^{p_k-1} v_{\varepsilon}\left|\nabla u_{\varepsilon}\right|^2+ p_k\left(p_k-1\right) \int_{\Omega} u_{\varepsilon}^{p_k} v_{\varepsilon} \nabla u_{\varepsilon} \cdot \nabla v_{\varepsilon}\nonumber \\
& + p_k \int_{\Omega} u_{\varepsilon}^{p_k} - p_k \int_{\Omega} u_{\varepsilon}^{p_k+1} \nonumber \\
\leqslant & -\frac{ p_k\left(p_k-1\right)}{2} \int_{\Omega} u_{\varepsilon}^{p_k-1} v_{\varepsilon}\left|\nabla u_{\varepsilon}\right|^2+\frac{ p_k\left(p_k-1\right)}{2} \int_{\Omega} u_{\varepsilon}^{p_k+1} v_{\varepsilon}\left|\nabla v_{\varepsilon}\right|^2 \nonumber\\
& +p_k  \int_{\Omega} u_{\varepsilon}^{p_k} - p_k  \int_{\Omega} u_{\varepsilon}^{p_k+1}\nonumber \\
\leqslant & -\frac{p_k\left(p_k-1\right)}{2} \int_{\Omega} u_{\varepsilon}^{p_k-1} v_{\varepsilon}\left|\nabla u_{\varepsilon}\right|^2+ p_k\left(p_k-1\right) c^2_1 \int_{\Omega} u_{\varepsilon}^{p_k+1} v_{\varepsilon} \nonumber \\
& - \int_{\Omega}   u_{\varepsilon}^{p_k}+ \left(\frac{p_k+1}{p_k}\right)^{p_k}|\Omega|
\end{align}
Since $p_k\left(p_k-1\right) \leqslant p_k^2$, $p_k \leqslant p_k^2$, $\frac{p_k\left(p_k-1\right)}{2} \geqslant \frac{p_k^2}{4}$ and $\left(\frac{p_k+1}{p_k}\right)^{p_k} \leqslant 2^{p_k}$, for all $t \in\left(0, T_{\max, \varepsilon}\right)$, we from \eqref{511-1339} infer that
\begin{align}\label{511-1338}
\dt \int_{\Omega} u_{\varepsilon}^{p_k}+\int_{\Omega} u_{\varepsilon}^{p_k}+\frac{p_k^2}{4} \int_{\Omega} u_{\varepsilon}^{p_k-1} v_{\varepsilon}\left|\nabla u_{\varepsilon}\right|^2 \leqslant 2^{p_k}|\Omega| +c_3 p_k^2 \int_{\Omega} u_{\varepsilon}^{p_k+1} v_{\varepsilon} 
\end{align}
where $c_3=c^2_1$. According to Lemma \ref{1002-lemma-11}, there exists $c_4>0$ such that
\begin{align}\label{1001-11}
\int_{\Omega}\frac{\left|\nabla v_{\varepsilon}\right|^6}{v_{\varepsilon}^5} \leqslant c_4  \quad \text { for all } t \in\left(0,  {T}_{\max, \varepsilon} \right) \text { and } \varepsilon \in(0,1).
\end{align}
By choosing $p_{\star}=2 p_0>2$ in \cite[Lemma 6.2]{2024-JDE-Winkler} and invoking \eqref{-2.9} and \eqref{-3.4}, we conclude that
\begin{align*}
\int_{\Omega} u_{\varepsilon}^{p_k+1} v_{\varepsilon} \leqslant & \frac{1}{4 c_3} \int_{\Omega} u_{\varepsilon}^{p_k-1} v_{\varepsilon}\left|\nabla u_{\varepsilon}\right|^2+\frac{c_4}{4 c_3} \cdot\left\{\int_{\Omega} u_{\varepsilon}^{\frac{p_k}{2}}\right\}^{\frac{2\left(p_k+1\right)}{p_k}} \\
& +K \cdot m \cdot\left(4 c_3\right)^\kappa \cdot p_k^{2 \kappa} \cdot \|v_0\|_{L^{\infty}(\Omega)} \cdot\left\{\int_{\Omega} u_{\varepsilon}^{\frac{p_k}{2}}\right\}^2  \quad \text { for all } t \in\left(0, T_{\max, \varepsilon}\right), 
\end{align*}
where $\kappa=\kappa\left(p_{\star}\right)>0$ and $K=K\left(p_{\star}\right)>0$. According to \eqref{511-1223-1}, for all $T \in\left(0, T_{\max, \varepsilon}\right)$ we can estimate
\begin{align*}
\int_{\Omega} u_{\varepsilon}^{\frac{p_k}{2}}=\int_{\Omega} u_{\varepsilon}^{p_{k-1}} \leqslant N_{k-1, \varepsilon}(T) \quad \text { for all } t \in(0, T).
\end{align*}
We then obtain
\begin{align*}
c_3 p_k^2 \int_{\Omega} u_{\varepsilon}^{p_k+1} v_{\varepsilon} \leqslant & \frac{p_k^2}{4} \int_{\Omega} u_{\varepsilon}^{p_k-1} v_{\varepsilon}\left|\nabla u_{\varepsilon}\right|^2+\frac{c_4  p_k^2}{4} N_{k-1, \varepsilon}^{\frac{2\left(p_k+1\right)}{p_k}}(T)\\
& +m 4^\kappa c_3^{\kappa+1}\|v_0\|_{L^{\infty}(\Omega)} K p_k^{2 \kappa+2} N_{k-1, \varepsilon}^2(T) .
\end{align*}
Since $p_k^2 \leqslant p_k^{2 \kappa+2}$ and $1 \leqslant N_{k-1, \varepsilon}^2(T) \leqslant N_{k-1, \varepsilon}^{\frac{2\left(p_k+1\right)}{p_k}}(T)$, from \eqref{511-1338} we deduce that with $c_5 =\max \left\{\frac{c_4}{4}, m 4^\kappa c_3^{\kappa+1}\|v_0\|_{L^{\infty}(\Omega)} K,1\right\}$, 
$$
\dt \int_{\Omega} u_{\varepsilon}^{p_k} + \int_{\Omega} u_{\varepsilon}^{p_k}
\leqslant   2^{p_k}|\Omega| + c_5 p_k^{2 \kappa+2} N_{k-1, \varepsilon}^{\frac{2\left(p_k+1\right)}{p_k}}(T) 
$$
for all $t \in(0, T)$, any $T \in\left(0, T_{\max, \varepsilon}\right)$ and each $\varepsilon \in(0,1)$.
An integration of this ODI provides that for all $t \in(0, T)$, $T \in\left(0, T_{\max, \varepsilon}\right)$ and $\varepsilon \in(0,1)$,
\begin{align*}
\int_{\Omega} u_{\varepsilon}^{p_k} & \leqslant \int_{\Omega}\left(u_0+\varepsilon\right)^{p_k}+  2^{p_k}|\Omega|  + c_5 p_k^{2 \kappa+2} N_{k-1, \varepsilon}^{\frac{2\left(p_k+1\right)}{p_k}}(T) \\
& \leqslant|\Omega| \cdot (\left\|u_0+1\right\|_{L^{\infty}(\Omega)}^{p_k}+ 2^{p_k}) + c_5 p_k^{2 \kappa+2} N_{k-1, \varepsilon}^{\frac{2\left(p_k+1\right)}{p_k}}(T).
\end{align*}
In view of \eqref{511-1223-1}, this yields
\begin{align*}
N_{k, \varepsilon}({T}) & \leqslant 1+|\Omega| \cdot (\left\|u_0+1\right\|_{L^{\infty}(\Omega)}+2)^{p_k}+ c_5 p_k^{2 \kappa+2} N_{k-1, \varepsilon}^{\frac{2\left(p_k+1\right)}{p_k}}(T) \\
& \leqslant s^{2^k}+d^k N_{k-1, \varepsilon}^{2+r \cdot 2^{-k}}({T}) \quad \text { for all } {T} \in\left(0, T_{\max, \varepsilon}\right) \text { and } \varepsilon \in(0,1)
\end{align*}
with
\begin{align*}
d=\left(\max \left\{2  c_5 p_0, 1\right\}\right)^{2 \kappa+2}, \quad 
r=\frac{2}{p_0} \text { and } \quad 
s=[1+ \max \left\{1,|\Omega|\right\} \cdot (\left\|u_0+1\right\|_{L^{\infty}(\Omega)}+2)]^{p_0},
\end{align*}
where the following estimate is used 
\begin{align*}
1+|\Omega| \cdot (\left\|u_0+1\right\|_{L^{\infty}(\Omega)}+2)^{p_k} & \leqslant (1+ \max \left\{1,|\Omega|\right\} \cdot (\left\|u_0+1\right\|_{L^{\infty}(\Omega)}+2))^{p_k}\\
& =[(1+ \max \left\{1,|\Omega|\right\} \cdot (\left\|u_0+1\right\|_{L^{\infty}(\Omega)}+2))^{p_0}]^{2^k}.
\end{align*}
Then it follows from \cite[Lemma 6.3]{2024-JDE-Winkler} and \eqref{511-1223} that
\begin{align*}
\left\|u_{\varepsilon}(t)\right\|_{L^{\infty}(\Omega)}^{p_0} & =\liminf _{k \rightarrow \infty}\left\{\int_{\Omega} u_{\varepsilon}^{p_k}(t)\right\}^{\frac{1}{2^k}} \\
& \leqslant \liminf _{k \rightarrow \infty} N_{k, \varepsilon}^{\frac{1}{2^k}}({T}) \\
& \leqslant\left(2 \sqrt{2} d^3 s^{1+\frac{r}{2}} c_2\right)^{e^{\frac{r}{2}}}
\end{align*}
for all $t \in(0, T)$, $T \in\left(0, T_{\max, \varepsilon}\right)$ and each $\varepsilon \in(0,1)$.
Taking $T \nearrow T_{\max, \varepsilon}$, we obtain \eqref{619-1648}.
\end{proof}

By virtue of Lemma \ref{lemma-4.4}, the global existence of $(u_{\varepsilon}, v_{\varepsilon})$ can be ensured for every $\varepsilon \in (0,1)$.

\begin{lem}\label{lemma-4.5}
Assume that \eqref{assIniVal} holds. Then $T_{\max, \varepsilon}=+\infty$ for all $\varepsilon \in(0,1)$.
\end{lem}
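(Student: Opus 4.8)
The plan is to combine the extensibility criterion \eqref{-2.7} recorded in Lemma~\ref{lemma-2.1} with the uniform-in-$t$ (and $\varepsilon$) bound for $\|u_{\varepsilon}\|_{L^{\infty}(\Omega)}$ established in Lemma~\ref{lemma-4.4}, arguing by contradiction. This is a short argument: all the substantive estimates have already been assembled, and the present lemma is essentially a one-line consequence of the blow-up dichotomy.

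Concretely, I would fix $\varepsilon\in(0,1)$ and suppose, contrary to the claim, that $T_{\max,\varepsilon}<\infty$. Then \eqref{-2.7} forces
$$
\limsup_{t\nearrow T_{\max,\varepsilon}}\|u_{\varepsilon}(t)\|_{L^{\infty}(\Omega)}=\infty .
$$
On the other hand, Lemma~\ref{lemma-4.4} provides a constant $C>0$, independent of $t\in(0,T_{\max,\varepsilon})$ and of $\varepsilon$, such that $\|u_{\varepsilon}(t)\|_{L^{\infty}(\Omega)}\leqslant C$ for all such $t$; in particular
$$
\limsup_{t\nearrow T_{\max,\varepsilon}}\|u_{\varepsilon}(t)\|_{L^{\infty}(\Omega)}\leqslant C<\infty .
$$
These two displays are incompatible, so the assumption $T_{\max,\varepsilon}<\infty$ must fail, and hence $T_{\max,\varepsilon}=+\infty$ for every $\varepsilon\in(0,1)$.

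I do not expect any genuine obstacle here: the Moser-type iteration in Lemma~\ref{lemma-4.4} together with the weighted-energy estimates feeding into it (Lemmas~\ref{lemma-3.8}, \ref{lemma-3.9x}, \ref{1002-lemma-11}) constitute the hard part, and that work is already complete. The only point worth double-checking is that the constant in Lemma~\ref{lemma-4.4} is indeed independent of $t$ on the \emph{entire} maximal interval $(0,T_{\max,\varepsilon})$ — which it is, by construction — so that passing to the limit $t\nearrow T_{\max,\varepsilon}$ is legitimate and the contradiction with \eqref{-2.7} is valid.
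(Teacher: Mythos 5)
Your argument is correct and matches the paper's proof exactly: the paper likewise derives the conclusion in one line by combining the extensibility criterion \eqref{-2.7} with the uniform $L^\infty$ bound of Lemma~\ref{lemma-4.4}. You have simply spelled out the contradiction in slightly more detail, which is fine.
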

\begin{proof}
This immediately follows from Lemma \ref{lemma-4.4} when combined with \eqref{-2.7}.
\end{proof}

The $L^\infty$ bound obtained in Lemma \ref{lemma-4.4} allows us to establish a lower control on $v_{\varepsilon}$ through a comparison argument.

\begin{lem}\label{lemma-4.6}
Let $T>0$ and assume that \eqref{assIniVal} holds. 
Then we have  
\begin{align}\label{-4.8}
v_{\varepsilon}(x, t) \geqslant C(T)  \quad \text { for all } x \in \Omega, ~~t \in(0, T), \text { and  } \varepsilon \in(0,1),
\end{align}
where $C(T)$ is a positive constant depending on $v_0$, but independent of $\varepsilon$.
\end{lem}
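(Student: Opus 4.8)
The plan is to derive the lower bound by a comparison argument against a spatially homogeneous subsolution, now that $u_\varepsilon$ is known to be uniformly bounded. First I would set
\begin{align*}
M := \sup_{\varepsilon \in (0,1)} \ \sup_{t \in (0, T_{\max,\varepsilon})} \|u_\varepsilon(t)\|_{L^\infty(\Omega)},
\end{align*}
which is finite and independent of $\varepsilon$ by Lemma \ref{lemma-4.4} (and $T_{\max,\varepsilon} = \infty$ for every $\varepsilon$ by Lemma \ref{lemma-4.5}). Since $0 \leqslant u_\varepsilon \leqslant M$ and $v_\varepsilon > 0$, the second equation in \eqref{sys-regul} yields the differential inequality
\begin{align*}
v_{\varepsilon t} - \Delta v_\varepsilon + M v_\varepsilon = (M - u_\varepsilon)\, v_\varepsilon \geqslant 0 \quad \text{in } \Omega \times (0, \infty),
\end{align*}
supplemented by $\partial_\nu v_\varepsilon = 0$ on $\partial\Omega$.

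Next I would compare $v_\varepsilon$ with the function $\underline{v}(t) := \mu\, e^{-Mt}$, where $\mu := \min_{\overline{\Omega}} v_0$; this is strictly positive because $v_0 > 0$ on the compact set $\overline{\Omega}$. Being constant in space, $\underline{v}$ solves $\underline{v}_t - \Delta \underline{v} + M \underline{v} = 0$ with $\partial_\nu \underline{v} = 0$ on $\partial\Omega$, and $\underline{v}(0) = \mu \leqslant v_0$ in $\overline{\Omega}$. Equivalently, the weighted difference $w_\varepsilon := e^{Mt}\bigl(v_\varepsilon - \underline{v}\bigr)$ satisfies
\begin{align*}
w_{\varepsilon t} - \Delta w_\varepsilon = e^{Mt}(M - u_\varepsilon)\, v_\varepsilon \geqslant 0 \quad \text{in } \Omega \times (0, \infty), \qquad \partial_\nu w_\varepsilon = 0 \ \text{ on } \partial\Omega,
\end{align*}
together with $w_\varepsilon(\cdot, 0) = v_0 - \mu \geqslant 0$. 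The parabolic maximum principle applied to $w_\varepsilon$ then gives $w_\varepsilon \geqslant 0$, i.e.
\begin{align*}
v_\varepsilon(x,t) \geqslant \mu\, e^{-Mt} \quad \text{for all } x \in \overline{\Omega},\ t > 0,\ \varepsilon \in (0,1).
\end{align*}

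Restricting to $t \in (0,T)$ and using monotonicity of $t \mapsto e^{-Mt}$, we obtain \eqref{-4.8} with $C(T) := \mu\, e^{-MT} > 0$, which depends only on $v_0$, $M$ and $T$, hence is independent of $\varepsilon$. I do not anticipate any genuine obstacle here; the only subtlety is that the zeroth-order term $-u_\varepsilon v_\varepsilon$ carries the unfavorable sign for a direct comparison, which is precisely why it is absorbed into the exponential weight $e^{Mt}$ (using $u_\varepsilon \leqslant M$) before invoking the maximum principle, and positivity of $C(T)$ rests on $v_0$ being bounded below by a positive constant on $\overline{\Omega}$.
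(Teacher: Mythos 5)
Your proof is correct and follows essentially the same route as the paper: use the uniform $L^\infty$ bound on $u_\varepsilon$ from Lemma \ref{lemma-4.4} to obtain the differential inequality $v_{\varepsilon t} \geqslant \Delta v_\varepsilon - M v_\varepsilon$, and then compare against the spatially homogeneous subsolution $\left(\inf_\Omega v_0\right) e^{-Mt}$. The only difference is cosmetic: you spell out the maximum-principle step explicitly via the weighted difference $w_\varepsilon = e^{Mt}(v_\varepsilon - \underline{v})$, whereas the paper simply invokes the comparison principle.
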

\begin{proof}
Since
\begin{align*}
v_{\varepsilon t} \geqslant \Delta v_{\varepsilon}-c_1 v_{\varepsilon} \quad \text { in } \Omega \times(0, \infty) \quad \text { for all } \varepsilon \in(0,1),
\end{align*}
with $c_1=\sup _{\varepsilon \in(0,1)} \sup _{t>0}\left\|u_{\varepsilon}(t)\right\|_{L^{\infty}(\Omega)}$ being finite by Lemma \ref{lemma-4.4}, from a comparison principle we have
\begin{align*}
v_{\varepsilon}(x, t) \geqslant\left\{\inf _{\Omega} v_0\right\} \cdot e^{-c_1 t} \quad \text { for all } x \in \Omega, ~~t>0 \text { and } \varepsilon \in(0,1).
\end{align*}
We complete the proof.
\end{proof}

Since the boundedness of $u_{\varepsilon}$ and $v_{\varepsilon}$ asserted in Lemmas  \ref{lemma-4.1}, \ref{lemma-4.4} and \ref{lemma-4.6}, the H\"older continuity of 
$u_{\varepsilon}$, $v_{\varepsilon}$, and $\nabla v_{\varepsilon}$ follows from standard 
parabolic regularity theory.
\begin{lem}\label{lemma-4.8}
Let $T>0$ and assume that \eqref{assIniVal} holds. Then one can find $\theta_1=\theta(T) \in(0,1)$  such that 
\begin{align}\label{-4.12}
\left\|u_{\varepsilon}\right\|_{C^{\theta_1, \frac{\theta_1}{2}}(\overline{\Omega} \times[0, T])} \leqslant C_1(T) \quad \text { for all } \varepsilon \in(0,1)
\end{align}
and
\begin{align}\label{-4.13}
\left\|v_{\varepsilon}\right\|_{C^{\theta_1, \frac{\theta_1}{2}}(\overline{\Omega} \times[0, T])} \leqslant C_1(T) \quad \text { for all } \varepsilon \in(0,1),
\end{align}
where $C_1(T)$ is a positive constant independent of $\varepsilon$.
Moreover, for each $\tau>0$ and all $T>\tau$ one can also fix $\theta_2=\theta_2(\tau, T) \in(0,1)$ such that 
\begin{align}\label{-4.14}
\left\|v_{\varepsilon}\right\|_{C^{2+\theta_2, 1+\frac{\theta_2}{2}}(\overline{\Omega} \times [\tau, T])} \leqslant C_2(\tau, T) \quad \text { for all } \varepsilon \in(0,1),
\end{align}
where $C_2(\tau,T)>0$ is a positive constant independent of $\varepsilon$.
\end{lem}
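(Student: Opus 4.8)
The plan is to exploit the uniform-in-$\varepsilon$ bounds already established and to treat the two equations in \eqref{sys-regul} in turn: first the linear $v$-equation, then the degenerate quasilinear $u$-equation, and finally a Schauder bootstrap for the $v$-equation. For the H\"older continuity of $v_\varepsilon$, note that by Lemmas~\ref{lemma-4.4} and~\ref{lemma-2.1} the zero-order term $-u_\varepsilon v_\varepsilon$ in the second equation of \eqref{sys-regul} is bounded in $L^\infty(\Omega\times(0,T))$ uniformly in $\varepsilon$, whereas $v_0\in W^{1,\infty}(\Omega)\hookrightarrow C^{\theta}(\overline\Omega)$. Writing $v_\varepsilon(t)=e^{t\Delta}v_0-\int_0^t e^{(t-s)\Delta}\{u_\varepsilon v_\varepsilon\}(s)\,\mathrm{d}s$ and invoking the familiar smoothing and H\"older estimates for the Neumann heat semigroup on $\Omega$ (equivalently, parabolic $L^p$ estimates combined with the embedding $W^{2,1}_p\hookrightarrow C^{\theta,\theta/2}$ applied locally in time) then yields a bound for $v_\varepsilon$ in $C^{\theta,\theta/2}(\overline\Omega\times[0,T])$ independent of $\varepsilon$, which is \eqref{-4.13}.

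For the H\"older continuity of $u_\varepsilon$ we regard the first equation of \eqref{sys-regul} as the divergence-form problem $u_{\varepsilon t}=\nabla\cdot(u_\varepsilon v_\varepsilon\nabla u_\varepsilon-u_\varepsilon^2 v_\varepsilon\nabla v_\varepsilon)+u_\varepsilon-u_\varepsilon^2$ and collect the $\varepsilon$-independent information available: Lemma~\ref{lemma-4.4} gives $0\le u_\varepsilon\le C$, Lemmas~\ref{lemma-2.1} and~\ref{lemma-4.6} give a two-sided bound $C(T)\le v_\varepsilon\le\|v_0\|_{L^\infty(\Omega)}$ on $\Omega\times(0,T)$, Lemma~\ref{lemma-4.1} gives $|\nabla v_\varepsilon|\le C$, and summing \eqref{0927-14} over a finite partition of $(0,T)$ and dividing by the lower bound for $v_\varepsilon$ from Lemma~\ref{lemma-4.6} yields $\int_0^T\!\int_\Omega|\nabla u_\varepsilon|^2\le C(T)$, equivalently $u_\varepsilon^2\in L^2((0,T);W^{1,2}(\Omega))$, all uniformly in $\varepsilon$. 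With these inputs the $u_\varepsilon$-equation falls within the scope of the H\"older continuity theory for degenerate parabolic equations of porous medium type: its principal part is comparable to $\tfrac12\Delta u_\varepsilon^2$, the drift $u_\varepsilon^2 v_\varepsilon\nabla v_\varepsilon$ and the reaction $u_\varepsilon-u_\varepsilon^2$ are bounded, and the structure conditions underlying the intrinsic-scaling / De Giorgi technique for degenerate equations (in the spirit of DiBenedetto's theory) hold with $\varepsilon$-independent constants, the initial modulus of continuity being controlled through $u_0\in W^{1,\infty}(\Omega)$. The resulting uniform H\"older estimate can also be obtained along the lines of the continuity arguments for closely related doubly degenerate taxis systems (see \cite{2024-JDE-Winkler, 2024-AaA-LiWinkler}), and it provides a bound for $u_\varepsilon$ in $C^{\theta_1,\theta_1/2}(\overline\Omega\times[0,T])$ independent of $\varepsilon$, that is, \eqref{-4.12}.

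Finally, once \eqref{-4.12} and \eqref{-4.13} are available, the product $u_\varepsilon v_\varepsilon$ is bounded in $C^{\theta',\theta'/2}(\overline\Omega\times[0,T])$ for some $\theta'\in(0,1)$, uniformly in $\varepsilon$, so that the second equation of \eqref{sys-regul} is a linear Neumann heat equation with H\"older-continuous inhomogeneity. Applying interior and boundary parabolic Schauder estimates on $\Omega\times[\tau/2,T]$ (where, since $\tau/2>0$, the solution is already bounded and the linear flow regularizes instantly) then gives a uniform bound for $v_\varepsilon$ in $C^{2+\theta_2,1+\theta_2/2}(\overline\Omega\times[\tau,T])$, which is \eqref{-4.14}.

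The main obstacle is the second step. The diffusion coefficient $u_\varepsilon v_\varepsilon$ of $\nabla u_\varepsilon$ degenerates as $u_\varepsilon\to0$ and possesses no positive lower bound uniform in $\varepsilon$, so the classical De Giorgi-Nash-Moser theory for uniformly parabolic equations is not directly applicable and one must work within the intrinsic-scaling machinery for degenerate equations; the delicate point is to check that the taxis-type drift $u_\varepsilon^2 v_\varepsilon\nabla v_\varepsilon$ is genuinely subordinate to the degenerate diffusion in the sense required by that theory, and that none of the constants produced along the way depend on $\varepsilon$.
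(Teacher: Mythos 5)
Your proposal is correct and takes essentially the same route as the paper: it derives the H\"older bounds for $u_\varepsilon$ from the intrinsic-scaling (DiBenedetto/Porzio--Vespri) theory for degenerate parabolic equations using the uniform $L^\infty$ and gradient bounds from Lemmas~\ref{lemma-4.1}, \ref{lemma-4.4}, \ref{lemma-4.6}, obtains the H\"older bound for $v_\varepsilon$ from parabolic regularity for the linear second equation, and then applies parabolic Schauder estimates together with a cut-off argument for \eqref{-4.14}. The paper itself simply cites \cite{1993-JDE-PorzioVespri} and \cite{1968--LadyzenskajaSolonnikovUralceva} without elaboration, whereas you correctly spell out the $\varepsilon$-uniform structural ingredients (two-sided bounds on $v_\varepsilon$, $L^\infty$ bound on $u_\varepsilon$, $L^2$-in-space-time bound on $\nabla u_\varepsilon$, boundedness of drift and reaction) that make the degenerate H\"older theory applicable.
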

\begin{proof}
It follows from Lemmas \ref{lemma-4.1}, \ref{lemma-4.4}, \ref{lemma-4.6} and \eqref{assIniVal} as well as the H\"{o}lder regularity theory for parabolic equations (cf. \cite{1993-JDE-PorzioVespri}) that \eqref{-4.12} and \eqref{-4.13} hold. Combining the standard Schauder estimates (cf. \cite{1968--LadyzenskajaSolonnikovUralceva}) with \eqref{-4.12} and a cut-off argument, we can deduce \eqref{-4.14}.
\end{proof}

With the above preparations in place, we may now apply a standard extraction argument to obtain a pair of limit functions $(u, v)$, which can be shown to constitute a global bounded weak solution to system \eqref{SYS:MAIN}, as stated in Theorem~\ref{thm-1.1}.

\begin{lem}\label{lemma-4.9}
Assume that the initial value $\left(u_0, v_0\right)$ satisfies \eqref{assIniVal}. Then there exist $(\varepsilon_j)_{j \in \mathbb{N}} \subset(0,1)$ as well as functions $u$ and $v$ which satisfy \eqref{solu:property} with $u \geqslant 0$ and $v>0$ in $\bar{\Omega} \times[0, \infty)$, such that
\begin{flalign}
& u_{\varepsilon} \rightarrow u  \quad \text { in } C_{\mathrm{loc}}^0(\overline{\Omega} \times[0, \infty)) \text {, }\label{-4.15}\\
& v_{\varepsilon} \rightarrow v  \quad \text { in } C_{\mathrm{loc}}^0(\overline{\Omega} \times[0, \infty)) \text { and in } C_{\mathrm{loc}}^{2,1}(\overline{\Omega} \times(0, \infty)),\label{-4.16}\\
& \nabla v_{\varepsilon} \stackrel{*}{\rightharpoonup} \nabla v \quad \text { in } L^{\infty}(\Omega \times(0, \infty)),\label{-4.17}
\end{flalign}
as $\varepsilon=\varepsilon_j \searrow 0$, and that $(u, v)$ is a global weak solution of the system (\ref{SYS:MAIN}) as defined in Definition \ref{def-weak-sol}. 
\end{lem}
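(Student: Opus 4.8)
The plan is to construct the limit pair by passing $\varepsilon=\varepsilon_j\searrow0$ along a subsequence in the regularized problems \eqref{sys-regul}, exploiting the $\varepsilon$-independent bounds of Lemmas~\ref{lemma-4.1}, \ref{lemma-4.4}, \ref{lemma-4.6} and~\ref{lemma-4.8} together with the dissipation estimate~\eqref{0927-14}. First I would use that the Hölder bounds \eqref{-4.12}--\eqref{-4.13} are uniform in $\varepsilon$ on each $\overline\Omega\times[0,T]$, so the Arzelà--Ascoli theorem makes $(u_\varepsilon)$ and $(v_\varepsilon)$ precompact in $C^0(\overline\Omega\times[0,T])$, while \eqref{-4.14} makes $(v_\varepsilon)$ precompact in $C^{2,1}(\overline\Omega\times[\tau,T])$ for every $0<\tau<T$. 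A diagonal extraction along $T_j\nearrow\infty$ and $\tau_j\searrow0$ then produces $(\varepsilon_j)\subset(0,1)$ and limits $u,v$ with \eqref{-4.15} and \eqref{-4.16}; since \eqref{-4.1} bounds $(\nabla v_\varepsilon)$ in $L^\infty(\Omega\times(0,\infty);\mathbb{R}^2)$, a further subsequence gives \eqref{-4.17} by the Banach--Alaoglu theorem, with the limit identified as $\nabla v$ because $v_\varepsilon\to v$ in $\mathcal{D}'$. The regularity \eqref{solu:property} is inherited from $u_\varepsilon\in C^0$ and $v_\varepsilon\in C^0\cap C^{2,1}(\overline\Omega\times(0,\infty))$; nonnegativity $u\geqslant0$ follows from $u_\varepsilon>0$, and $v>0$ from the lower bound \eqref{-4.8}, which persists in the limit on each $\Omega\times(0,T)$.

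Next I would upgrade the weighted gradient control to a genuine Sobolev bound. Since $T_{\max,\varepsilon}=\infty$ we have $\tau=1$ in \eqref{-3.5ss}, so summing \eqref{0927-14} over consecutive unit intervals yields $\int_0^T\int_\Omega v_\varepsilon|\nabla u_\varepsilon|^2\leqslant C\lceil T\rceil$ for each $T>0$; combined with \eqref{-4.8} this gives an $\varepsilon$-independent bound for $\int_0^T\int_\Omega|\nabla u_\varepsilon|^2$, i.e. $(u_\varepsilon)$ is bounded in $L^2_{\mathrm{loc}}([0,\infty);W^{1,2}(\Omega))$. Along a further subsequence $\nabla u_\varepsilon\rightharpoonup\nabla u$ in $L^2_{\mathrm{loc}}(\overline\Omega\times[0,\infty);\mathbb{R}^2)$, the limit being identified via $u_\varepsilon\to u$ in $\mathcal{D}'$. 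Because $\nabla u_\varepsilon^2=2u_\varepsilon\nabla u_\varepsilon$ with $u_\varepsilon\to u$ locally uniformly and $(u_\varepsilon)$ uniformly bounded by \eqref{619-1648}, the strong--weak product structure gives $\nabla u_\varepsilon^2\rightharpoonup\nabla u^2$ in $L^2_{\mathrm{loc}}(\overline\Omega\times[0,\infty);\mathbb{R}^2)$; in particular $u^2\in L^2_{\mathrm{loc}}([0,\infty);W^{1,2}(\Omega))$, and together with the $L^\infty$ bounds this verifies \eqref{-2.1}--\eqref{-2.2}.

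Then I would pass to the limit in the identities. Fixing $\varphi\in C_0^\infty(\overline\Omega\times[0,\infty))$ and $T$ with $\operatorname{supp}\varphi\subset\overline\Omega\times[0,T)$, testing \eqref{sys-regul} classically gives \eqref{-2.3}--\eqref{-2.4} with $(u_{\varepsilon_j},v_{\varepsilon_j})$ in place of $(u,v)$ and $u_0+\varepsilon_j$ in place of $u_0$. In \eqref{-2.4} every term converges by \eqref{-4.15}--\eqref{-4.16} and $u_{\varepsilon_j}v_{\varepsilon_j}\to uv$ locally uniformly (and $v_{0\varepsilon}=v_0$). In \eqref{-2.3}, the terms $-\int u_{\varepsilon_j}\varphi_t$, $-\int(u_0+\varepsilon_j)\varphi(0)$ and $\int u_{\varepsilon_j}\varphi-\int u_{\varepsilon_j}^2\varphi$ converge by local uniform convergence; for the flux terms, $v_{\varepsilon_j}\nabla\varphi\to v\nabla\varphi$ strongly in $L^2(\Omega\times(0,T))$ pairs with the weakly convergent $\nabla u_{\varepsilon_j}^2$, while $u_{\varepsilon_j}^2v_{\varepsilon_j}\nabla\varphi\to u^2v\nabla\varphi$ strongly in $L^1(\Omega\times(0,T))$ pairs with the $L^\infty$-bounded, weak-$*$ convergent $\nabla v_{\varepsilon_j}$. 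Hence \eqref{-2.3} holds in the limit, and $(u,v)$ is a global weak solution in the sense of Definition~\ref{def-weak-sol}.

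The main obstacle I anticipate is the passage to the limit in the doubly degenerate diffusion flux $v_\varepsilon\nabla u_\varepsilon^2=2v_\varepsilon u_\varepsilon\nabla u_\varepsilon$: there is no strong $W^{1,2}$-compactness for $u_\varepsilon$ at our disposal, only the weak $L^2_{\mathrm{loc}}$-convergence of $\nabla u_\varepsilon$, so the nonlinearity has to be handled purely through the strong--weak structure, i.e. by playing the uniform (Hölder) convergence of $u_\varepsilon$ and $v_\varepsilon$ against the merely weakly convergent gradient. This in turn relies on first converting the weighted estimate \eqref{0927-14} into an honest, $\varepsilon$-independent $L^2_{\mathrm{loc}}([0,\infty);W^{1,2}(\Omega))$ bound by means of the lower bound \eqref{-4.8}; one must keep in mind that this lower bound degenerates as $T\to\infty$, so the resulting Sobolev bound is only local in time — which is nonetheless sufficient because the test functions are compactly supported in time.
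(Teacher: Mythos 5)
Your proposal is essentially correct and follows the same overall strategy as the paper: Arzel\`a--Ascoli compactness via the uniform H\"older bounds of Lemma~\ref{lemma-4.8} plus a diagonal extraction to obtain \eqref{-4.15}--\eqref{-4.16}, Banach--Alaoglu for \eqref{-4.17}, positivity from Lemma~\ref{lemma-4.6}, a Sobolev-type bound on $\nabla u_\varepsilon^2$ to make sense of the degenerate flux, and then passage to the limit in the weak identities by pairing the strongly (locally uniformly) convergent factors with the weakly or weak-$*$ convergent gradients. There is, however, one genuinely simpler twist in your route. You get an $\varepsilon$-uniform bound on $\nabla u_\varepsilon^2$ in $L^2_{\mathrm{loc}}$ directly: from \eqref{0927-14} (summed over unit time intervals) you bound $\int_0^T\int_\Omega v_\varepsilon|\nabla u_\varepsilon|^2$, then divide by the lower bound $v_\varepsilon\geqslant C(T)$ of \eqref{-4.8} to bound $\int_0^T\int_\Omega|\nabla u_\varepsilon|^2$, and finally use the $L^\infty$ bound \eqref{619-1648} to absorb the factor $u_\varepsilon$ in $\nabla u_\varepsilon^2=2u_\varepsilon\nabla u_\varepsilon$. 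The paper instead first derives the weighted estimate $\int_0^T\int_\Omega u_\varepsilon^{\gamma-1}v_\varepsilon|\nabla u_\varepsilon|^2\leqslant C$ by integrating \eqref{-3.43}, and then applies Young's inequality with exponents $\tfrac{2}{\beta}$ and $\tfrac{2}{2-\beta}$ to conclude only an $L^\beta((0,T);W^{1,\beta}(\Omega))$ bound on $u_\varepsilon^2$ for $\beta\in(1,2)$. Both are sufficient for \eqref{-2.2}, since only $u^2\in L^1_{\mathrm{loc}}([0,\infty);W^{1,1}(\Omega))$ is required, but your version reaches $\beta=2$ and dispenses with the intermediate weighted estimate and the Young step, which also simplifies the strong--weak pairing in the term $\int v_\varepsilon\nabla u_\varepsilon^2\cdot\nabla\varphi$. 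Your closing caveat about the $T$-dependence of the lower bound \eqref{-4.8} (so that the $W^{1,2}$ bound is only local in time) is exactly the right point to flag, and is indeed harmless because test functions are compactly supported in time.
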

\begin{proof}
The existence of a subsequence $\{\varepsilon_j\}_{j\in\mathbb{N}}$ and nonnegative limit functions $u$ and $v$ satisfying \eqref{solu:property} and \eqref{-4.15}--\eqref{-4.17} follows from Lemmas~\ref{lemma-4.1} and~\ref{lemma-4.8} via a standard diagonal extraction argument. Furthermore, the nonnegativity of each $u_{\varepsilon}$, together with \eqref{-4.15} and an application of Fatou’s lemma, implies that $u \geqslant 0$ in $\bar \Omega \times[0, \infty)$. In addition, by \eqref{-4.8} and \eqref{-4.16}, we deduce that $v>0$ in $\bar \Omega \times[0, \infty)$.

From \eqref{-2.9}, \eqref{0926-11}, \eqref{-3.33}, \eqref{0908-1147}, \eqref{-3.36} and Lemma~\ref{lemma-4.5}, we infer that
\begin{align*}
\int_{0}^{T} \int_{\Omega} \frac{u_{\varepsilon}}{v_{\varepsilon}} |\nabla v_{\varepsilon}|^{2} \leqslant C
\qquad \text{for all } T>0 \text{ and } \varepsilon \in (0,1)
\end{align*}
and
\begin{align*}
\int_{0}^{T} \int_{\Omega} v_{\varepsilon} |\nabla u_{\varepsilon}|^{2} \leqslant C
\qquad \text{for all } T>0 \text{ and } \varepsilon \in (0,1).
\end{align*}
A direct integration of \eqref{-3.43}, combined with these inequalities and \eqref{-3.36}, then yields a constant $C(p,T)>0$ such that
\begin{align}\label{615-1428}
\int_{0}^{T} \int_{\Omega} u_{\varepsilon}^{\,p-1} v_{\varepsilon} |\nabla u_{\varepsilon}|^{2} \leqslant C
\qquad \text{for all } T>0 \text{ and } \varepsilon \in (0,1).
\end{align}
Fixing $\beta \in (1,2)$ and $\gamma \in (1,3)$, and applying Young's inequality, we obtain
\begin{align*}
\int_{0}^{T} \int_{\Omega} |\nabla u_{\varepsilon}^{2}|^{\beta}
&= 2^{\beta} \int_{0}^{T} \int_{\Omega}
\left(u_{\varepsilon}^{\gamma-1} v_{\varepsilon} |\nabla u_{\varepsilon}|^{2}\right)^{\frac{\beta}{2}}
u_{\varepsilon}^{\frac{(3-\gamma)\beta}{2}} v_{\varepsilon}^{-\frac{\beta}{2}} \\
&\le 2^{\beta} \int_{0}^{T} \int_{\Omega}
u_{\varepsilon}^{\gamma-1} v_{\varepsilon} |\nabla u_{\varepsilon}|^{2}
\;+\;
2^{\beta} \int_{0}^{T} \int_{\Omega}
u_{\varepsilon}^{\frac{(3-\gamma)\beta}{2-\beta}}
v_{\varepsilon}^{-\frac{\beta}{2-\beta}},
\qquad \text{for all } T>0,\ \varepsilon \in (0,1).
\end{align*}
Invoking Lemmas~\ref{lemma-4.4}, \ref{lemma-4.6} and \eqref{615-1428}, we conclude that
\begin{align}\label{-4.23}
\left(u_{\varepsilon}^{2}\right)_{\varepsilon \in (0,1)}
\ \text{is bounded in }\
L^{\beta}\!\left((0,T);\, W^{1,\beta}(\Omega)\right)
\qquad \text{for all } T>0 .
\end{align}
The regularity conditions \eqref{-2.1} and \eqref{-2.2} in Definition~\ref{def-weak-sol} follow directly from \eqref{-4.15}, \eqref{-4.16}, and \eqref{-4.23}.  
Similarly, combining \eqref{-4.15}, \eqref{-4.16}, \eqref{-4.17}, and \eqref{-4.23}, we deduce \eqref{-2.3} and \eqref{-2.4}.
\end{proof}

\begin{proof}[Proof of Theorem \ref{thm-1.1}]
Theorem~\ref{thm-1.1} follows directly from Lemmas~\ref{lemma-4.1}, \ref{lemma-4.4}, \ref{lemma-4.5}, and \ref{lemma-4.9}.
\end{proof}

\noindent\textbf{Data availability} The manuscript has no associated data.

\vskip 3mm
\noindent {\large\textbf{Declarations}}
\vskip 2mm 

\noindent\textbf{Conflict of interest} On behalf of all authors, the corresponding author states that there is no conflict of interest.

\hfill$ \Box$


\end{document}